\documentclass{article}

\usepackage{arxiv}

\usepackage[utf8]{inputenc} % allow utf-8 input
\usepackage[T1]{fontenc}    % use 8-bit T1 fonts
\usepackage[hypertexnames=true]{hyperref}       % hyperlinks
\usepackage{url}            % simple URL typesetting
\usepackage{booktabs}       % professional-quality tables
\usepackage{amsmath, amsfonts}       % blackboard math symbols
\usepackage{amsthm}
\usepackage{nicefrac}       % compact symbols for 1/2, etc.
\usepackage{microtype}      % microtypography
\usepackage{cleveref}       % smart cross-referencing
\usepackage{graphicx}
\usepackage{doi}

\usepackage{amssymb}
\usepackage{algorithmic}
\usepackage[ruled,linesnumbered,vlined]{algorithm2e}
\usepackage{array}
\usepackage[caption=false,font=normalsize,labelfont=sf,textfont=sf]{subfig}
\usepackage{textcomp}
\usepackage{stfloats}
\usepackage{verbatim}
\usepackage{cite}
\usepackage{multirow} % Required for multirows
\usepackage{scalerel}
\usepackage{tikz}
\usepackage[title,titletoc]{appendix}

\DeclareMathOperator*{\argmin}{\arg\min}

\theoremstyle{plain}

\newtheorem{corollary}{Corollary}
\newtheorem{lemma}{Lemma}
\newtheorem{proposition}{Proposition}
\theoremstyle{definition}
\newtheorem{assumption}{Assumption}
\newtheorem{definition}{Definition}
\theoremstyle{remark}
\newtheorem{remark}{Remark}
\newtheorem{example}{Example}

\title{Cardinality-Constrained Multi-Objective Optimization: Novel Optimality Conditions and Algorithms}

% Here you can change the date presented in the paper title
%\date{September 9, 1985}
% Or remove it
%\date{}

\author{ \href{https://orcid.org/0000-0002-2488-5486}{\includegraphics[scale=0.06]{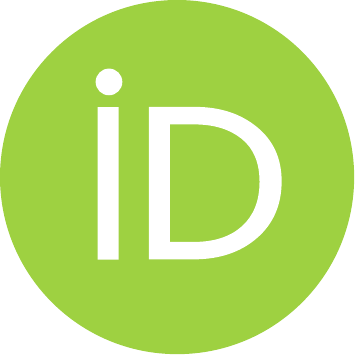}\hspace{1mm}Matteo Lapucci}\\
	Global Optimization Laboratory (GOL) \\
	Department of Information Engineering \\
	University of Florence \\
	Via di Santa Marta, 3, 50139, Florence, Italy \\
	\texttt{matteo.lapucci@unifi.it} \\
	%% examples of more authors
	\And
	\href{https://orcid.org/0000-0002-1394-0937}{\includegraphics[scale=0.06]{orcid.pdf}\hspace{1mm}Pierluigi Mansueto} \\
	Global Optimization Laboratory (GOL) \\
	Department of Information Engineering \\
	University of Florence \\
	Via di Santa Marta, 3, 50139, Florence, Italy \\
	\texttt{pierluigi.mansueto@unifi.it} \\
}

% Uncomment to override  the `A preprint' in the header

%%% Add PDF metadata to help others organize their library
%%% Once the PDF is generated, you can check the metadata with
%%% $ pdfinfo template.pdf
\hypersetup{
pdftitle={Cardinality-Constrained Multi-Objective Optimization: Novel Optimality Conditions and Algorithms},
pdfsubject={},
pdfauthor={Matteo Lapucci, Pierluigi Mansueto},
pdfkeywords={Multi-objective optimization, Cardinality constraints, Optimality conditions, Numerical methods, Pareto front},
}

\begin{document}
\maketitle

\begin{abstract}
	In this paper, we consider multi-objective optimization problems with a sparsity constraint on the vector of variables. For this class of problems, inspired by the homonymous necessary optimality condition for sparse single-objective optimization, we define the concept of $L$-stationarity and we analyze its relationships with other existing conditions and Pareto optimality concepts. We then propose two novel algorithmic approaches: the first one is an Iterative Hard Thresholding method aiming to find a single $L$-stationary solution, while the second one is a two-stage algorithm designed to construct an approximation of the whole Pareto front. 
	Both methods are characterized by theoretical properties of convergence to points satisfying necessary conditions for Pareto optimality. Moreover, we report numerical results establishing the practical effectiveness of the proposed methodologies.
\end{abstract}

\keywords{Multi-objective optimization \and Cardinality constraints \and Optimality conditions \and Numerical methods \and Pareto front}
\MSCs{90C26 \and 90C29 \and 90C46}

\section{Introduction}

Multi-objective optimization (MOO) is a mathematical tool that has received loads of attention by the research community for the last 25 years. As a matter of fact, MOO problems turned out to be relevant in different application fields, where objectives that are in contrast with each other must be taken into account (see, e.g., \cite{Campana2018,Carrizosa1998,Fliege2001,pellegrini2014application,TAVANA20041147}).

With respect to single-objective optimization (SOO), the multi-objective case presents an additional complexity: in general, a solution minimizing all the objectives at once does not exist. This fact requires to introduce optimality conditions based on Pareto's theory, together with new, nontrivial, optimization schemes. Among the latter,  scalarization \cite{eichfelder09,Pascoletti1984} and evolutionary \cite{deb02,laumanns02} approaches are very popular. However, although these two algorithm classes have some appealing features, they also present important shortcomings. In particular, the scalarization approaches strongly depend on the problem domain and structure for the weight choices; moreover, even under strong regularity assumptions, they may lead to unbounded scalar problems for unfortunate weight selections \cite[section 7]{fliege09}. On the other hand, despite excelling at handling very difficult tasks, convergence properties are hard to prove for evolutionary approaches \cite{fliege09}. 

A different branch of MOO algorithms that is receiving increasing attention is that of the descent algorithms \cite{Cocchi2020,Drummond2004,fliege09,Fliege2000,Lapucci2023}. These methods basically extend the classic SOO descent methods. For most of them, theoretical convergence properties have been proved. The earliest developed algorithms of this class were only able to produce a single Pareto-stationary solution; in order to generate an approximation of the entire Pareto front, they were run multiple times in a multi-start fashion. More recently, some of these algorithms have been extended to overcome this limitation. These new approaches (see, e.g., \cite{COCCHI2021100008,Cocchi2020_Onthe,Cocchi2018,custodio11,lapucciimproved}) are capable of dealing with sequences of sets of points and thus producing a Pareto front approximation in an efficient and effective manner. Inspired by works for SOO \cite{LOCATELLI2014169,MANSUETO2021107849}, front-oriented descent methods have also been combined with genetic algorithms for MOO (see, e.g., \cite{Lapucci2022NSMA}).

A second topic recently investigated by the optimization community concerns problems where solutions with few nonzero components are required \cite{tillmann2021cardinality}. Solution sparsity is often induced by the direct introduction of a cardinality constraint on the variables vector. However, setting an upper-bound for the $\ell_0$ pseudo-norm makes the problem partly combinatorial and thus $\mathcal{NP}$-hard \cite{Bienstock1996,natarajan95}. For this reason, many approaches whose aim is to solve this problem approximately have been proposed. We refer the reader to \cite{tillmann2021cardinality} for a thorough survey of these methods. However, algorithms dealing exactly with the $\ell_0$ pseudo-norm can be found in the literature. In particular, the \textit{Iterative Hard Thresholding} (\texttt{IHT}) algorithm \cite{beck13}, the \textit{Penalty Decomposition} (\texttt{PD}) approach \cite{lu13} and the \textit{Sparse Neighborhood Search} (\texttt{SNS}) method \cite{lapucci2021unifying} are designed to be employable in the most general cases, even without convexity assumptions. With these methods, problems are tackled by means of continuous local optimization steps and convergence to solutions satisfying necessary optimality conditions is guaranteed. 

Although the two challenges have been thoroughly investigated separately, the combination of sparsity and multiple objectives has almost not been explored. The theoretical foundation for cardinality-constrained MOO was recently laid in \cite{lapucci22}, where a Penalty Decomposition approach was also proposed for sparse MOO tasks along with its convergence analysis. 
Moreover, a theoretical study extending the work from \cite{burdakov2016mathematical} to the MOO case was presented in \cite{garmanjani2022stationarity}.
The development of high-performing procedures to deal with this class of problems is beneficial for many real-world applications. For instance, there are several reasons in machine learning for inducing sparsity within classification/regression models (e.g., interpretability \cite{bertsimas2019price}, robustness \cite{weston2003use}, lightness \cite{carreira18}). In addition, there are approaches in the literature where learning tasks can be tackled from a Pareto-based, multi-objective perspective: fitting quality and model complexity are just two examples of conflicting objectives for which a good trade-off may be useful \cite{jin2008pareto}.

In this paper, we continue the theoretical analysis started in \cite{lapucci22}, introducing new optimality conditions for MOO problems with cardinality constraints. In particular, we define the concept of $L$-stationarity in MOO, which is directly inspired by the homonymous condition for sparse SOO tasks \cite{beck13}. Then, we introduce two new algorithms to solve these problems; the first one consists of an extension of the \texttt{IHT} method to the MOO case and it is designed to retrieve an $L$-stationary solution; we call this method \textit{Multi-Objective Iterative Hard Thresholding} (\texttt{MOIHT}) and prove that that it is indeed guaranteed to converge to points satisfying the newly introduced necessary optimality condition.
The second algorithm, on the other hand, is a two-stage approach whose ultimate goal is to approximate the whole Pareto front. This method, which we call \textit{Sparse Front Steepest Descent} (\texttt{SFSD}), is also analyzed theoretically and then shown, numerically, to actually reconstruct well the solution sets.

The remainder of the paper is organized as follows. In Section \ref{sec::preliminaries}, we first review the main MOO concepts along with some notions for cardinality-constrained MOO problems. In Section \ref{sec::optimality-conditions}, we define $L$-stationarity for the considered class of problems and state its theoretical relations with Pareto's theory and other existing optimality conditions. In Section \ref{sec::IHT_ext}, we provide a description of the \texttt{MOIHT} algorithm, along with its convergence analysis; moreover, in this section we propose the \texttt{SFSD} methodology. In Section \ref{sec::prel_ext}, we report the results of some computational experiments, showing the goodness of our novel approaches. Finally, in Section \ref{sec::conclusions}, we provide some concluding remarks.
\section{Preliminaries}
\label{sec::preliminaries}

In this paper, we consider problems of the following form:
\begin{equation}
	\label{eq::mo-prob}
		\min_{x \in \mathbb{R}^n}\;F\left(x\right)=\left(f_1\left(x\right),\ldots,f_m\left(x\right)\right)^\top
		\text{ s.t. }\left\|x\right\|_0 \le s,
\end{equation}
where $F: \mathbb{R}^n \rightarrow \mathbb{R}^m$ is a continuously differentiable vector-valued function, $\|\cdot\|_0$ denotes the $\ell_0$ pseudo-norm, i.e., the number of nonzero components of a vector, and $s \in \mathbb{N}$ is such that $1 \le s < n$.
In what follows, we indicate with $\|\cdot\|$ the Euclidean norm in $\mathbb{R}^n$.
We denote by $\Omega = \{x \in \mathbb{R}^n \mid \|x\|_0 \le s\}$ the closed and non-empty set induced by the upper bound on the $\ell_0$ pseudo-norm.

To deal with the multi-objective setting, we need to define a partial ordering in $\mathbb{R}^m$: given $u, v \in \mathbb{R}^m$, we say that $u < v$ if and only if, for all $j \in \{1,\ldots, m\}$, $u_j < v_j$; an analogous definition can be stated for the operators $\le, >, \ge$. Furthermore, given the objective function $F$, we say that $x$ \textit{dominates} $y$ w.r.t.\ $F$ if $F(x) \le F(y)$ and $F(x) \ne F(y)$ and we denote it by $F(x) \lneqq F(y)$. 

In multi-objective optimization, a solution which simultaneously minimizes all the objectives is unlikely to exist. In this case, optimality concepts are based on Pareto's theory.
\begin{definition}
	A point $\bar{x} \in \Omega$ is \textit{Pareto optimal} for problem \eqref{eq::mo-prob} if there does not exist any $y \in \Omega$ such that $F(y) \lneqq F(\bar{x})$. If there exists a neighborhood $\mathcal{N}(\bar{x})$ such that the property  holds in $\Omega\, \cap\, \mathcal{N}(\bar{x})$, then $\bar{x}$ is \textit{locally Pareto optimal}.
\end{definition}
Pareto optimality is a strong property and, as a consequence, it is often hard to achieve in practice. Then, a weaker but more affordable condition can be introduced.
\begin{definition}
	A point $\bar{x} \in \Omega$ is \textit{weakly Pareto optimal} for problem \eqref{eq::mo-prob} if there does not exist any $y \in \Omega$ such that $F(y) < F(\bar{x})$. If there exists a neighborhood $\mathcal{N}(\bar{x})$ such that the property  holds in $\Omega\, \cap\, \mathcal{N}(\bar{x})$, then $\bar{x}$ is \textit{locally weakly Pareto optimal}.
\end{definition}
We refer to the set of Pareto optimal solutions as the \textit{Pareto set}; the image of the latter under $F$ is called \textit{Pareto front}.

\medskip
\noindent\textbf{Additional notation:} Given an index set $S \subseteq \{1,\ldots, n\}$, the cardinality of $S$ is indicated with $|S|$, while we denote by $\bar{S} = \{1,\ldots, n\} \setminus S$ its \textit{complementary set}; we call $S$ a \textit{singleton} if $|S|= 1$. Letting $x \in \mathbb{R}^n$, we denote by $x_S$ the sub-vector of $x$ induced by $S$, i.e., the vector composed by the components $x_i$, with $i \in S$; $S_1(x) = \{i \in \{1,\ldots, n\} \mid x_i \ne 0\}$ represents the \textit{support set} of $x$, that is, the set of the indices corresponding to the non-zero components of $x$; $S_0(x) = \{1,\ldots, n\} \setminus S_1(x)$ is the $S_1(x)$ complementary set. Furthermore, according to \cite{beck16}, we say that an index set $J$ is a \textit{super support set} for $x$ if $S_1(x) \subseteq J$ and $|J|=s$; the set of all super support sets at $x$ is denoted by $\mathcal{J}(x)$ and it is a singleton if and only if $\|x\|_0=s$. Finally, we denote by $\boldsymbol{1}_N$ and $\boldsymbol{0}_N$, with $N \in \mathbb{N}^+$, the $N$-dimensional vectors of all ones and all zeros, respectively.

\subsection{The Proximal Operator in Multi-Objective Optimization}
Thorough analyses of proximal methods in the multi-objective setting can be found in the literature (see, e.g., \cite{bonnel05,Tanabe2019}). For the scope of this work, we refer to the discussion carried out in \cite{Tanabe2019}, where the considered MOO problems are of the form
\begin{equation}
	\label{eq:multi-prox-prob}
	\min_{x \in \mathbb{R}^n}\;\left(f_1\left(x\right) + g_1\left(x\right),\ldots,f_m\left(x\right) + g_m\left(x\right)\right)^\top.
\end{equation}
For all $j \in \{1,\ldots, m\}$, $f_j$ is assumed to be continuously differentiable, whereas $g_j$ is lower semi-continuous, proper convex but not necessarily smooth.

Let $x_k\in\mathbb{R}^n$. A \textit{proximal step} at $x_k$ can be carried out according to $x_{k + 1} = x_k + t_kd_k$, where $t_k$ is a suitable stepsize and the descent direction $d_k$ is obtained solving
\begin{equation}
	\label{eq::proximal-gradient-methods-problem}
	\min_{d \in \mathbb{R}^n}\;\max_{j \in \left\{1,\ldots, m\right\}}\left\{\nabla f_j(x_k)^\top d + g_j\left(x_k + d\right) - g_j\left(x_k\right)\right\} + \frac{L}{2}\left\|d\right\|^2,
\end{equation}
where $L > 0$.
An optimal solution of problem \eqref{eq:multi-prox-prob} is such that $\boldsymbol{0}_n$ is solution to \eqref{eq::proximal-gradient-methods-problem}.

Interestingly and similarly to the scalar case, problem \eqref{eq::proximal-gradient-methods-problem} can be seen as a generalization of well-known schemes to define the search direction: 
\begin{itemize}
	\item if, for all $j \in \{1,\ldots, m\}$, $g_j = 0$, then \eqref{eq::proximal-gradient-methods-problem} coincides with the problem of finding the steepest common descent direction \cite{Fliege2000};
	\item if, for all $j \in \{1,\ldots, m\}$, $g_j$ is the indicator function of a convex set $C$, then \eqref{eq::proximal-gradient-methods-problem} becomes equivalent to the projected gradient direction problem  \cite{Drummond2004}.
\end{itemize}

In the next section, we are going to show that the proximal operator can be used to handle the nonconvex set $\Omega$, in line with the work \cite{beck13} for scalar optimization. 

\section{Optimality Conditions}
\label{sec::optimality-conditions}

Under differentiability assumptions on the objective function $F$, a Pareto-stationarity condition was proved in \cite{lapucci22} to be necessary for (local) optimality. In what follows, we report slightly different definition and properties, adapted to problem \eqref{eq::mo-prob}.
\begin{definition}[{\cite[Definition 3.2]{lapucci22}}]
	\label{def::pareto_stat}
	A point $\bar{x} \in \Omega$ is Pareto-stationary for \eqref{eq::mo-prob} if 
	\begin{equation}
		\label{eq::basic_feasibility_MOO}
		\theta\left(\bar{x}\right) = \min_{d \in \mathcal{D}\left(\bar{x}\right)} \max_{j=1,\ldots,m} \nabla f_j(\bar{x})^\top d + \frac{1}{2} \left\|d\right\|^2 = 0,
	\end{equation}
	where $\mathcal{D}(\bar{x}) = \{v \in \mathbb{R}^n \mid \exists \bar{t} > 0: \bar{x} + tv \in \Omega\; \forall t \in [0, \bar{t}\,]\,\}= \{v \in \mathbb{R}^n \mid \|v_{S_0(\bar{x})}\|_0 \le s - \|\bar{x}\|_0\}$ is the set of feasible directions at $\bar{x}$.
\end{definition}
We denote by $v(\bar{x})$ the set of optimal solutions of problem \eqref{eq::basic_feasibility_MOO} at $\bar{x}$.
\begin{lemma}[{\cite[Proposition 3.3]{lapucci22}}]
	\label{lem::BF_necessary_for_Pareto}
	Let $\bar{x} \in \Omega$ be locally weakly Pareto optimal for problem \eqref{eq::mo-prob}. Then, $\bar{x}$ is Pareto-stationary for \eqref{eq::mo-prob}.
\end{lemma}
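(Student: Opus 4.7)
The natural plan is to argue by contradiction. First I would note that $d = \boldsymbol{0}_n$ belongs to $\mathcal{D}(\bar{x})$ (since $\bar{x} \in \Omega$) and achieves value $0$ in the inner problem of \eqref{eq::basic_feasibility_MOO}, so $\theta(\bar{x}) \le 0$ always. Hence, if $\bar{x}$ is not Pareto-stationary, necessarily $\theta(\bar{x}) < 0$, and there exists $d^\star \in \mathcal{D}(\bar{x})$ with
\[
\max_{j=1,\ldots,m}\nabla f_j(\bar{x})^\top d^\star + \tfrac{1}{2}\|d^\star\|^2 < 0.
\]
In particular $\nabla f_j(\bar{x})^\top d^\star < -\tfrac{1}{2}\|d^\star\|^2 \le 0$ for every $j$, so $d^\star$ is a common descent direction for all components of $F$.

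Next I would exploit the feasibility of $d^\star$: by definition of $\mathcal{D}(\bar{x})$, there exists $\bar{t} > 0$ such that $\bar{x} + t d^\star \in \Omega$ for all $t \in [0,\bar{t}\,]$. By continuous differentiability of each $f_j$, a first-order Taylor expansion gives
\[
f_j(\bar{x} + t d^\star) = f_j(\bar{x}) + t\,\nabla f_j(\bar{x})^\top d^\star + o(t), \quad j = 1,\ldots,m.
\]
Since the strict negativity of $\nabla f_j(\bar{x})^\top d^\star$ holds for each of the \emph{finitely many} indices $j$, I can choose a common $t^\star \in (0, \bar{t}\,]$ small enough that $f_j(\bar{x} + t d^\star) < f_j(\bar{x})$ for every $j$ and every $t \in (0, t^\star]$, and small enough that $\bar{x} + t d^\star$ lies in any prescribed neighborhood $\mathcal{N}(\bar{x})$.

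Putting the pieces together, for such $t$ the point $y_t = \bar{x} + t d^\star$ belongs to $\Omega \cap \mathcal{N}(\bar{x})$ and satisfies $F(y_t) < F(\bar{x})$ componentwise, contradicting the local weak Pareto optimality of $\bar{x}$. This forces $\theta(\bar{x}) = 0$, i.e., Pareto-stationarity. The only mildly delicate point is the interplay between the $o(t)$ remainders and the index $j$; but since $m$ is finite, taking a minimum of finitely many admissible stepsizes suffices, so no real obstacle arises beyond the standard Taylor argument.
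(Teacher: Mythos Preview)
Your argument is correct and is precisely the standard route: show $\theta(\bar{x})\le 0$ via $d=\boldsymbol{0}_n$, assume $\theta(\bar{x})<0$ to obtain a common strict descent direction $d^\star\in\mathcal{D}(\bar{x})$, and use first-order Taylor expansions together with the feasibility of $\bar{x}+td^\star$ for small $t$ to contradict local weak Pareto optimality. Note, however, that the present paper does not give its own proof of this lemma: it is imported verbatim as \cite[Proposition~3.3]{lapucci22}, so there is no in-paper argument to compare against. Your proof is exactly the kind of argument one expects behind that citation, and is fully consistent with the techniques the paper does display elsewhere (e.g., the contradiction-plus-scaling reasoning in the proof of Proposition~\ref{prop::basic_necessary_for_L} and in Appendix~\ref{app::proof_basic_results}).
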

The second lemma states that, assuming the convexity of the objective functions, the stationarity condition is also sufficient for local weak Pareto optimality.
\begin{lemma}
	\label{lem::sufficientB}
	Assume $F$ is component-wise convex. Let $\bar{x} \in \Omega$ a Pareto-stationary point for problem \eqref{eq::mo-prob}. Then, $\bar{x}$ is locally weakly Pareto optimal for \eqref{eq::mo-prob}.
\end{lemma}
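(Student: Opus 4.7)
The plan is to argue by contradiction. Suppose $\bar{x}$ is not locally weakly Pareto optimal. Then there exists a sequence $\{y_k\} \subset \Omega$ with $y_k \to \bar{x}$ and $F(y_k) < F(\bar{x})$, i.e., $f_j(y_k) < f_j(\bar{x})$ for every $j \in \{1,\ldots,m\}$.

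Next I would verify that $d_k := y_k - \bar{x}$ lies in the cone $\mathcal{D}(\bar{x})$ for all $k$ sufficiently large. By continuity, for $k$ large the components $(y_k)_i$ with $i \in S_1(\bar{x})$ remain nonzero, so the only indices at which $(d_k)_i$ can differ from zero in $S_0(\bar{x})$ correspond to new nonzeros of $y_k$; since $\|y_k\|_0 \le s$, there can be at most $s - \|\bar{x}\|_0$ of them, giving $\|(d_k)_{S_0(\bar{x})}\|_0 \le s - \|\bar{x}\|_0$, which is exactly membership in $\mathcal{D}(\bar{x})$.

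Then I would exploit component-wise convexity: for each $j$,
\begin{equation*}
f_j(\bar{x}) + \nabla f_j(\bar{x})^\top d_k \le f_j(y_k) < f_j(\bar{x}),
\end{equation*}
so $\nabla f_j(\bar{x})^\top d_k < 0$ for all $j$, hence $\max_j \nabla f_j(\bar{x})^\top d_k < 0$. To get a contradiction with Pareto-stationarity, I would exploit the fact that $\mathcal{D}(\bar{x})$ is a cone: for every $\alpha \in (0,1]$, $\alpha d_k \in \mathcal{D}(\bar{x})$, hence by Definition \ref{def::pareto_stat},
\begin{equation*}
\alpha \max_{j} \nabla f_j(\bar{x})^\top d_k + \frac{\alpha^2}{2}\|d_k\|^2 \ge 0.
\end{equation*}
Dividing by $\alpha$ and letting $\alpha \downarrow 0$ yields $\max_j \nabla f_j(\bar{x})^\top d_k \ge 0$, contradicting the strict inequality obtained from convexity.

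The main obstacle is the cardinality-based step, namely confirming that $d_k \in \mathcal{D}(\bar{x})$ for $k$ large: it hinges on the fact that near $\bar{x}$ the support $S_1(\bar{x})$ persists inside $S_1(y_k)$, so that the sparsity budget on $S_0(\bar{x})$ is respected. Once this is in place, the convexity inequality together with the cone property of $\mathcal{D}(\bar{x})$ delivers the contradiction with essentially the same first-order reasoning used in smooth unconstrained multi-objective optimization.
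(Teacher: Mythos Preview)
Your proof is correct and uses essentially the same ingredients as the paper's: the cone/scaling property of $\mathcal{D}(\bar{x})$ to pass from $\theta(\bar{x})=0$ to $\max_j \nabla f_j(\bar{x})^\top d \ge 0$, combined with the convexity gradient inequality. The paper proceeds in the reverse order (first establishing $\max_j \nabla f_j(\bar{x})^\top d \ge 0$ for all $d\in\mathcal{D}(\bar{x})$ via scaling, then invoking convexity by deferring to \cite[Proposition 3.5]{lapucci22}), whereas your argument is self-contained and also spells out explicitly why $y_k-\bar{x}\in\mathcal{D}(\bar{x})$ for $k$ large.
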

\begin{proof}
	See Appendix \ref{app::proof_basic_results}.
\end{proof}

Moreover, in \cite{lapucci22}, the Lu-Zhang first-order optimality conditions for scalar cardinality-constrained problems \cite{lu13} have been extended to the multi-objective optimization setting.

\begin{definition}[{\cite[Definition 3.6]{lapucci22}}]
	A point $\bar{x} \in \Omega$ satisfies the \textit{Multi-Objective Lu-Zhang first-order optimality conditions} (MOLZ conditions) for \eqref{eq::mo-prob} if there exists a super support set $J\in\mathcal{J}(\bar{x})$ such that
	\begin{equation}
		\label{eq::lu_zhang}
		\theta_J\left(\bar{x}\right) = \min_{d\in\mathbb{R}^n} \max_{j=1,\ldots,m} \nabla f_j(\bar{x})^\top d + \frac{1}{2}\left\|d\right\|^2 = 0 \quad \text{ s.t. }\quad d_{\bar{J}} = \boldsymbol{0}_{|\bar{J}|}
	\end{equation}
	
\end{definition}
Since problem \eqref{eq::lu_zhang} has a strongly convex objective function and a convex feasible set, it has a unique optimal solution at $\bar{x}$ that we indicate with $d_J(\bar{x})$.

\begin{lemma}[{\cite[Proposition 3.7]{lapucci22}}]
	\label{lem::MOLZ_necessary_for_BF}
	Let $\bar{x} \in \Omega$ be a Pareto stationary point for problem \eqref{eq::mo-prob}.Then, $\bar{x}$ satisfies MOLZ conditions.
\end{lemma}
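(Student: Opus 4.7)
The plan is to show that for any super support set $J\in\mathcal{J}(\bar{x})$, the restricted problem \eqref{eq::lu_zhang} has the same optimal value as the unrestricted problem \eqref{eq::basic_feasibility_MOO}, and then invoke Pareto stationarity.

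The key structural observation is that the feasible set $\{d\in\mathbb{R}^n \mid d_{\bar{J}}=\boldsymbol{0}_{|\bar{J}|}\}$ of problem \eqref{eq::lu_zhang} is contained in the feasible direction set $\mathcal{D}(\bar{x})$. To see this, fix any $J\in\mathcal{J}(\bar{x})$; by definition of super support set, $S_1(\bar{x})\subseteq J$, hence $\bar{J}\subseteq S_0(\bar{x})$. If $d$ satisfies $d_{\bar{J}}=\boldsymbol{0}$, then its support is contained in $J$, and the nonzero components lying in $S_0(\bar{x})$ can only be indexed by $S_0(\bar{x})\cap J$. Since $S_1(\bar{x})\subseteq J$, we get $|S_0(\bar{x})\cap J| = |J|-|S_1(\bar{x})|=s-\|\bar{x}\|_0$, which yields $\|d_{S_0(\bar{x})}\|_0\le s-\|\bar{x}\|_0$, i.e., $d\in\mathcal{D}(\bar{x})$.

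With this inclusion established, the conclusion is essentially immediate. Let $\phi(d)=\max_{j=1,\ldots,m}\nabla f_j(\bar{x})^\top d + \frac{1}{2}\|d\|^2$, which is the common objective of both problems. Since we are minimizing $\phi$ over a subset, $\theta_J(\bar{x})\ge \theta(\bar{x})=0$, where the equality uses the Pareto stationarity hypothesis. On the other hand, $d=\boldsymbol{0}_n$ is feasible for problem \eqref{eq::lu_zhang} and gives $\phi(\boldsymbol{0}_n)=0$, so $\theta_J(\bar{x})\le 0$. Combining the two inequalities yields $\theta_J(\bar{x})=0$, which is precisely the MOLZ condition for the chosen $J$.

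No step seems to present real difficulty; the only point that needs a bit of care is the counting argument showing $\bar{J}\subseteq S_0(\bar{x})$ and $|S_0(\bar{x})\cap J|=s-\|\bar{x}\|_0$, which uses the defining properties $S_1(\bar{x})\subseteq J$ and $|J|=s$ of a super support set. Note that the argument works for any $J\in\mathcal{J}(\bar{x})$, so the MOLZ condition actually holds along every super support set of $\bar{x}$, which is slightly stronger than the existential statement required.
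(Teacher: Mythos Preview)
Your argument is correct. The inclusion $\{d:d_{\bar J}=\boldsymbol{0}\}\subseteq\mathcal{D}(\bar x)$ via the counting $|J\cap S_0(\bar x)|=s-\|\bar x\|_0$, followed by the sandwich $0=\theta(\bar x)\le\theta_J(\bar x)\le\phi(\boldsymbol{0}_n)=0$, is exactly the natural route, and your observation that this gives $\theta_J(\bar x)=0$ for \emph{every} $J\in\mathcal{J}(\bar x)$ (not just one) is a genuine bonus, recovering one direction of the subsequent equivalence lemma as well.

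Note, however, that the present paper does not supply its own proof of this lemma: it is quoted verbatim from \cite[Proposition~3.7]{lapucci22}. So there is no in-paper argument to compare against; your write-up simply fills in what the authors left to the cited reference, and does so cleanly.
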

As pointed out in \cite{lapucci22}, the converse is not always true; in order to obtain an equivalence between the two conditions, we need a stronger requirement.
\begin{lemma}[{\cite[Proposition 3.10]{lapucci22}}]
	A point $\bar{x} \in \Omega$ is a Pareto stationary point for problem \eqref{eq::mo-prob} if and only if it satisfies MOLZ conditions for all $J\in\mathcal{J}(\bar{x})$.
\end{lemma}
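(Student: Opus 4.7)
The plan is to reduce the claim to the set identity
\[
\mathcal{D}(\bar x) \;=\; \bigcup_{J \in \mathcal{J}(\bar x)} \{\,d \in \mathbb{R}^n : d_{\bar J} = \boldsymbol{0}_{|\bar J|}\,\},
\]
which exhibits the nonconvex feasible region of problem \eqref{eq::basic_feasibility_MOO} as the finite union of the affine subspaces that serve as feasible regions for the MOLZ subproblems \eqref{eq::lu_zhang}. Once this identity is in hand, both directions follow easily from the fact that \eqref{eq::basic_feasibility_MOO} and \eqref{eq::lu_zhang} share the same strongly convex objective and that $d = \boldsymbol{0}_n$ produces the value $0$ in either problem.

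To verify the identity I would proceed by double inclusion. If $d_{\bar J} = \boldsymbol{0}_{|\bar J|}$ for some $J \in \mathcal{J}(\bar x)$, then $d$ is supported in $J$, so $\|d_{S_0(\bar x)}\|_0 \le |J \setminus S_1(\bar x)| = s - \|\bar x\|_0$, placing $d$ in $\mathcal{D}(\bar x)$. Conversely, given $d \in \mathcal{D}(\bar x)$, the set $T := S_1(d) \cap S_0(\bar x)$ has cardinality at most $s - \|\bar x\|_0$; adjoining $T$ to $S_1(\bar x)$ and then padding with arbitrary indices from $S_0(\bar x)$ until the cardinality reaches $s$ yields a super support set that contains $S_1(d)$, as required.

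The forward direction is then immediate: if $\theta(\bar x) = 0$ and $J \in \mathcal{J}(\bar x)$, the feasible set of \eqref{eq::lu_zhang} is contained in $\mathcal{D}(\bar x)$, so $0 \ge \theta_J(\bar x) \ge \theta(\bar x) = 0$, where the upper bound uses $d = \boldsymbol{0}_n$. For the reverse direction, assume $\theta_J(\bar x) = 0$ for every $J \in \mathcal{J}(\bar x)$ and suppose, towards a contradiction, that $\theta(\bar x) < 0$; a minimizer $d^\ast$ of \eqref{eq::basic_feasibility_MOO} exists by coercivity of the objective and closedness of $\mathcal{D}(\bar x)$, and the identity supplies some $K \in \mathcal{J}(\bar x)$ with $d^\ast_{\bar K} = \boldsymbol{0}_{|\bar K|}$. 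Then $d^\ast$ is feasible for \eqref{eq::lu_zhang} at $J = K$ and attains the strictly negative value $\theta(\bar x)$, contradicting $\theta_K(\bar x) = 0$. I expect the only real difficulty to be the combinatorial bookkeeping behind the set identity; everything else reduces to the elementary monotonicity of the infimum under restriction of the feasible set.
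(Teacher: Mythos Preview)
The paper does not actually prove this lemma; it is stated without proof and attributed to \cite[Proposition 3.10]{lapucci22}. So there is no in-paper argument to compare against.

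That said, your proof is correct. The decomposition
\[
\mathcal{D}(\bar x) \;=\; \bigcup_{J \in \mathcal{J}(\bar x)} \{\,d \in \mathbb{R}^n : d_{\bar J} = \boldsymbol{0}_{|\bar J|}\,\}
\]
is exactly the right structural observation, and your double-inclusion argument for it is clean (the only detail worth making explicit is that $S_1(d) \subseteq S_1(\bar x) \cup T$, which you implicitly use). Once the identity is in place, both directions follow as you describe: the forward one is nothing more than monotonicity of the infimum under restriction of the feasible region (and in fact coincides with the content of the preceding lemma, also cited from \cite{lapucci22}), while the reverse one uses existence of a minimizer for \eqref{eq::basic_feasibility_MOO} --- guaranteed by strong convexity of the objective and closedness of $\mathcal{D}(\bar x)$ as a finite union of linear subspaces --- together with the identity to locate that minimizer in a single subspace and derive the contradiction. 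Nothing is missing.
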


The Pareto-stationarity condition can be interpreted as a direct extension of the \textit{basic feasibility} concept in cardinality-constrained SOO \cite{beck13,beck16}. As such, the limitations of scalar basic feasibility naturally get transferred to the MOO case; in particular, Pareto-stationarity is only a local optimality condition and it does not allow to obtain information about the quality of the current support set. The MOLZ conditions emphasize this issue even more, being generally less restrictive than Pareto-stationarity.

With the above consideration in mind, we are motivated to extend the stronger $L$-stationarity condition from \cite{beck13} to the MOO case. In order to do so, we shall reinterpret $L$-stationarity in terms of proximal operators. Specifically, we can employ problem \eqref{eq::proximal-gradient-methods-problem} to define $L$-stationarity for MOO.

Let us consider the problem 
\begin{equation}
	\label{eq::MOO_L_stationarity}
	\theta_L\left(\bar{x}\right) = \min_{d \in  \mathcal{D}_L\left(\bar{x}\right)}\;\max_{j=1,\ldots,m} \nabla f_j(\bar{x})^\top d + \frac{L}{2}\left\|d\right\|^2,
\end{equation}  
where $\mathcal{D}_L(\bar{x}) = \{v \in \mathbb{R}^n \mid \bar{x} + v \in \Omega\}$ and let us denote by $v_L(\bar{x})$ the set of optimal solutions at $\bar{x}$ (since $\Omega$ is not a convex set, the solution is not necessarily unique).
It is easy to notice that problem \eqref{eq::MOO_L_stationarity} is equivalent to \eqref{eq::proximal-gradient-methods-problem} where, for all $j \in \{1,\ldots, m\}$, $g_j$ is the indicator function of the set $\Omega$. 
\begin{lemma}
	\label{lemma:continuity-theta}
	Let $\bar{x} \in \Omega$. Then, the following conditions hold:
	\begin{enumerate}
		\item $\theta_L(\bar{x})$ and $v_L(\bar{x})$ are well-defined;
		\item $\theta_L(\bar{x}) \le 0$;
		\item the mapping $\bar{x} \rightarrow \theta_L(\bar{x})$ is continuous.
	\end{enumerate}
\end{lemma}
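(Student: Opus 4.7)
The plan is to handle the three items in order. For well-definedness (item 1), the feasible set $\mathcal{D}_L(\bar{x})$ is the translate $\Omega - \bar{x}$ of the closed set $\Omega$, and $d=0$ lies in it whenever $\bar{x}\in\Omega$, so $\mathcal{D}_L(\bar{x})$ is nonempty and closed. The objective $\psi_{\bar{x}}(d)=\max_{j}\nabla f_j(\bar{x})^\top d + (L/2)\|d\|^2$ is continuous and, since $L>0$, coercive in $d$ (the quadratic term dominates the piecewise-linear one). Weierstrass then gives existence of a minimizer, so both $\theta_L(\bar{x})$ and $v_L(\bar{x})$ are well-defined. For item 2, plugging $d=0$ into $\psi_{\bar{x}}$ yields value $0$, hence $\theta_L(\bar{x})\le 0$; this bound will be crucial in the proof of item 3.

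For continuity, the key move is to absorb the $\bar{x}$-dependence of the feasible set by the substitution $y=\bar{x}+d$, rewriting
\begin{equation*}
\theta_L(\bar{x}) \;=\; \min_{y\in\Omega}\;\phi(\bar{x},y), \qquad \phi(\bar{x},y) \;=\; \max_{j}\nabla f_j(\bar{x})^\top(y-\bar{x}) + \tfrac{L}{2}\|y-\bar{x}\|^2,
\end{equation*}
where now $\phi$ is jointly continuous in $(\bar{x},y)$ while the feasible set $\Omega$ is fixed. I would then establish continuity by proving upper and lower semi-continuity separately along an arbitrary sequence $\{\bar{x}^k\}\subseteq\Omega$ with $\bar{x}^k\to\bar{x}$. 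Upper semi-continuity is immediate: fixing a minimizer $y^\star$ for $\bar{x}$, we have $\theta_L(\bar{x}^k)\le \phi(\bar{x}^k,y^\star)\to \phi(\bar{x},y^\star)=\theta_L(\bar{x})$.

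Lower semi-continuity is the harder half. Letting $y^k$ be any minimizer of $\phi(\bar{x}^k,\cdot)$ over $\Omega$, item 2 gives $\phi(\bar{x}^k,y^k)\le 0$; picking an index $j_k$ achieving the max and applying Cauchy--Schwarz on the linear term yields $\|y^k-\bar{x}^k\|\le (2/L)\max_{j}\|\nabla f_j(\bar{x}^k)\|$. By continuity of the gradients and boundedness of $\{\bar{x}^k\}$, the sequence $\{y^k\}$ is therefore bounded, so up to a subsequence $y^k\to y^\star\in\Omega$ (using closedness of $\Omega$). Joint continuity of $\phi$ then gives $\theta_L(\bar{x}^k)=\phi(\bar{x}^k,y^k)\to \phi(\bar{x},y^\star)\ge \theta_L(\bar{x})$ along that subsequence, and a routine subsequence-of-subsequence argument upgrades this to $\liminf_k \theta_L(\bar{x}^k)\ge \theta_L(\bar{x})$ for the full sequence. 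The main obstacle is precisely this a-priori boundedness of the minimizers $y^k$: without the coercive quadratic term and the upper bound from item 2, one could not hope to pass to the limit in a non-compact feasible set like $\Omega$.
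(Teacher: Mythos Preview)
Your proof is correct. Items 1 and 2 match the paper's argument (the paper invokes strong convexity of the objective rather than coercivity, but both yield existence of a minimizer on the closed nonempty set $\mathcal{D}_L(\bar{x})$, and strong convexity implies coercivity anyway).

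For item 3, the paper simply defers to the proof of Proposition~4 in \cite{Drummond2004}, remarking that the argument there ``is not spoiled by the set $\mathcal{D}_L(\bar{x})$ being nonconvex.'' Your argument is therefore more self-contained: the substitution $y=\bar{x}+d$ that transfers the $\bar{x}$-dependence from the feasible set to the objective is exactly the device that makes the nonconvexity of $\Omega$ harmless, since all that is subsequently used about $\Omega$ is that it is closed. The a-priori bound $\|y^k-\bar{x}^k\|\le (2/L)\max_j\|\nabla f_j(\bar{x}^k)\|$, obtained from item~2 and Cauchy--Schwarz, is precisely the compactness ingredient that the Drummond-type argument needs, and your subsequence-of-subsequence reasoning to pass from convergent subsequences to $\liminf_k\theta_L(\bar{x}^k)\ge\theta_L(\bar{x})$ is sound. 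One small remark: you restrict to sequences $\{\bar{x}^k\}\subseteq\Omega$, which is appropriate given the lemma's hypothesis $\bar{x}\in\Omega$ and is needed so that item~2 applies at each $\bar{x}^k$; the paper's statement implicitly carries the same restriction.
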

\begin{proof}
	\begin{enumerate}
		\item The proof is trivial since
		\begin{itemize}
			\item the feasible set $ \mathcal{D}_L(\bar{x})$ is closed and non-empty,
			\item $\max_{j=1,\ldots,m} \nabla f_j(\bar{x})^\top d + \frac{L}{2}\|d\|^2$ is strongly convex and continuous in $ \mathcal{D}_L(\bar{x})$.
		\end{itemize}
		\item Given $\hat{d} = \boldsymbol{0}_n$, we have that
		$\max_{j=1,\ldots,m} \nabla f_j(\bar{x})^\top\hat{d} + \frac{L}{2}\|\hat{d}\|^2 = 0$. Since $\hat{d}\in\mathcal{D}_L(\bar{x})$, we get the thesis.
		\item The proof is identical to the one of Proposition 4 in \cite{Drummond2004}. The argument is not spoiled by the set $\mathcal{D}_L(\bar{x})$ being nonconvex. 
	\end{enumerate}
\end{proof}

We are now ready to introduce the definition of $L$-stationarity in MOO.

\begin{definition}
	\label{def:L-stat}
	A point $\bar{x} \in \Omega$ is \textit{$L$-stationary} for problem \eqref{eq::mo-prob} if $\theta_L(\bar{x}) = 0$.
\end{definition}

\begin{remark}
	By simple algebraic manipulations, the problem in \eqref{eq::MOO_L_stationarity} can be rewritten as
	\begin{equation}
		\label{eq::L_stationarity_MOO_z}
		\min_{z \in \Omega}\;\max_{j=1,\ldots,m} \nabla f_j(\bar{x})^\top\left(z - \bar{x}\right) + \frac{L}{2}\left\|z - \bar{x}\right\|^2.
	\end{equation}
	We can now observe that, if $m=1$, Definition \ref{def:L-stat} actually coincides with scalar $L$-stationarity. Indeed, exploiting \eqref{eq::L_stationarity_MOO_z}, $\theta_L(\bar{x})$ is equivalent to
	\begin{gather*}
		\min_{z\in\Omega}\nabla f(\bar{x})^\top(z-\bar{x})+\frac{L}{2}\|z-\bar{x}\|^2=\min_{z\in\Omega} \frac{L}{2}\|z-\bar{x}+\frac{1}{L}\nabla f(\bar{x})\|^2 - \frac{1}{2L}\|\nabla f(\bar{x})\|^2.
	\end{gather*}
    The minimum in the above problem is attained for $z^*\in\Pi_\Omega[\bar{x}-\frac{1}{L}\nabla f(\bar{x})]$, with $\Pi_\Omega$ being the (not unique) Euclidean projection onto the nonconvex set $\Omega$. We thus have that $\theta_L(\bar{x})=0$ if
    $\frac{L}{2}\|z^*-\bar{x}\|^2 + \nabla f(\bar{x})^\top(z^*-\bar{x}) =0,$
    which is satisfied if $\bar{x}\in \Pi_\Omega[\bar{x}-\frac{1}{L}\nabla f(\bar{x})]$, i.e., $\bar{x}$ is $L$-stationary according to \cite{beck13}.
\end{remark} 

In the rest of the section, we analyze the relations between $L$-stationarity, Pareto optimality, Pareto-stationarity and MOLZ conditions. 
We begin showing that, for any $L > 0$, each $L$-stationary point is Pareto-stationary.
\begin{proposition}
	\label{prop::basic_necessary_for_L}
	Let $\bar{x} \in \Omega$ be an $L$-stationary point for problem \eqref{eq::mo-prob} with $L > 0$. Then, $\bar{x}$ is Pareto-stationary for \eqref{eq::mo-prob}.
\end{proposition}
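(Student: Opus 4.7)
The plan is to proceed by contradiction. Since $\boldsymbol{0}_n \in \mathcal{D}(\bar{x})$ trivially, we always have $\theta(\bar{x}) \le 0$, so Pareto-stationarity reduces to showing $\theta(\bar{x}) \ge 0$. Assume instead that $\theta(\bar{x}) < 0$; then there exists a feasible direction $d \in \mathcal{D}(\bar{x})$ with
\[
\max_{j=1,\ldots,m} \nabla f_j(\bar{x})^\top d + \frac{1}{2}\|d\|^2 < 0.
\]
In particular, setting $c = \max_j \nabla f_j(\bar{x})^\top d$, this forces $c < -\tfrac{1}{2}\|d\|^2 \le 0$.

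Next I would exploit the cone-like structure of $\mathcal{D}(\bar{x})$ to scale $d$ down. By definition of $\mathcal{D}(\bar{x})$, there exists $\bar t > 0$ such that $\bar{x} + t d \in \Omega$ for all $t \in [0,\bar t\,]$. Hence for every $\alpha \in (0,\bar t\,]$ the vector $\alpha d$ satisfies $\bar{x} + \alpha d \in \Omega$, which means $\alpha d \in \mathcal{D}_L(\bar{x})$. Evaluating the $L$-stationarity objective at $\alpha d$ gives
\[
\max_{j=1,\ldots,m} \nabla f_j(\bar{x})^\top (\alpha d) + \frac{L}{2}\|\alpha d\|^2 = \alpha\left(c + \frac{L\alpha}{2}\|d\|^2\right).
\]
Since $c < 0$, the factor in parentheses becomes negative for all sufficiently small $\alpha > 0$, and then the whole expression is strictly negative. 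This yields $\theta_L(\bar{x}) < 0$, contradicting the assumed $L$-stationarity $\theta_L(\bar{x}) = 0$.

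Therefore $\theta(\bar{x}) = 0$ and $\bar{x}$ is Pareto-stationary. I don't expect a real obstacle here: the only slightly delicate point is to check that an arbitrary direction in $\mathcal{D}(\bar{x})$ (defined via short feasible segments) can be rescaled to land in the set-level feasibility $\mathcal{D}_L(\bar{x})$, but this follows immediately from the explicit characterization $\mathcal{D}(\bar{x}) = \{v \mid \|v_{S_0(\bar{x})}\|_0 \le s - \|\bar{x}\|_0\}$ recorded in Definition \ref{def::pareto_stat}, since any scaling of $v$ preserves its support and hence keeps $\bar{x} + \alpha v$ inside $\Omega$ for small $\alpha$. The quadratic term being scaled by $L$ rather than $1$ is harmless, because it contributes only an $O(\alpha^2)$ perturbation while the linear term contributes $O(\alpha)$ and has the same (negative) sign as in the Pareto-stationarity expression.
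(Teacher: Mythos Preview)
Your proof is correct and follows essentially the same approach as the paper: both argue by contradiction, extract a direction $\hat d\in\mathcal{D}(\bar x)$ with strictly negative linear part, rescale it by a small factor so that it belongs to $\mathcal{D}_L(\bar x)$ and the quadratic term (with coefficient $L/2$) is dominated, and conclude $\theta_L(\bar x)<0$. The only cosmetic difference is that the paper writes out the explicit upper bound $-\tfrac{2}{L\|\hat d\|^2}\max_j\nabla f_j(\bar x)^\top\hat d$ on the scaling factor, whereas you simply invoke ``sufficiently small $\alpha$''.
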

\begin{proof}
	By contradiction, we assume that $\bar{x}$ is not Pareto-stationary for \eqref{eq::mo-prob}, i.e., there exists $\hat{d} \in \mathcal{D}(\bar{x})$ such that 
	\begin{equation}
		\label{eq::objfun_B_barx}
		0 > \max_{j \in \left\{1,\ldots, m\right\}}\nabla f_j\left(\bar{x}\right)^\top\hat{d} + \frac{1}{2}\|\hat{d}\|^2 \ge \max_{j \in \left\{1,\ldots, m\right\}}\nabla f_j\left(\bar{x}\right)^\top\hat{d},
	\end{equation}
	where the second inequality is justified by the non-negativity of the norm operator.

	We now define the direction $\tilde{d}(t)= t\hat{d}$.
	Given the definition of $\mathcal{D}(\bar{x})$ and the feasibility of $\hat{d}$, we have there exists $\bar{t} > 0$ such that $\bar{x} + \tilde{d}(t) \in \Omega\;\forall t \in [0, \bar{t}\,]$.
	Thus, by definition of $\mathcal{D}_L(\bar{x})$, for all $t \in [0, \bar{t}\,]$, $\tilde{d}(t) \in \mathcal{D}_L(\bar{x}).$
	Let us define the function $\tilde{\theta}_L:\mathbb{R}^n\times\mathbb{R}^n\rightarrow\mathbb{R}$ as $\tilde{\theta}_L(x, d) = \max_{j=1,\ldots,m}\nabla f_j(x)^\top d + \frac{L}{2}\|d\|^2.$
	By \eqref{eq::MOO_L_stationarity}, it follows that $\theta_L(\bar{x}) = \tilde{\theta}_L(\bar{x}, \bar{d}^L)$, where $\bar{d}^L\in v_L(\bar{x})$, and also 
	\begin{equation}
		\label{eq::theta_tildetheta}
		\theta_L\left(\bar{x}\right) \le \tilde{\theta}_L\left(\bar{x}, d\right), \quad \forall\,d \in \mathcal{D}_L\left(\bar{x}\right).
	\end{equation}
	Combining the definitions of $\tilde{d}(t)$ and $\tilde{\theta}_L(x,d)$, we get that $\tilde{\theta}_L(\bar{x}, \tilde{d}(t)) = t\max_{j=1,\ldots,m}\nabla f_j(\bar{x})^\top\hat{d} + t^2\frac{L}{2}\|\hat{d}\|^2.$
	It is easy to see that $\tilde{\theta}_L(\bar{x}, \tilde{d}(t)) < 0$	if
	\begin{equation}
		\label{eq::interval_t}
		0 < t < -\frac{2}{L\|\hat{d}\|^2}\max_{j=1,\ldots,m}\nabla f_j\left(\bar{x}\right)^\top\hat{d},
	\end{equation}
	where the right-hand side is a positive quantity as $L>0$ and \eqref{eq::objfun_B_barx} holds.
	
	Then, taking into account the feasibility of $\tilde{d}(t)$, \eqref{eq::theta_tildetheta} and \eqref{eq::interval_t}, we can define a direction $\tilde{d}(\hat{t})$, with $\hat{t} \in \left(0, \min\{\bar{t}, -\frac{2}{L\|\hat{d}\|^2}\max_{j=1,\ldots,m}\nabla f_j(\bar{x})^\top\hat{d}\}\right),$
	so that $\tilde{d}(\hat{t}) \in \mathcal{D}_L(\bar{x})$ and $\theta_L(\bar{x}) \le \tilde{\theta}_L(\bar{x}, \tilde{d}(\hat{t})) < 0$. We finally get a contradiction since, by hypothesis, $\bar{x}$ is an $L$-stationary point for \eqref{eq::mo-prob}, i.e., $\theta_L(\bar{x}) = 0$.
\end{proof}

The last result also highlights the relation between $L$-stationarity and MOLZ conditions, which, as stated in Lemma \ref{lem::MOLZ_necessary_for_BF}, are necessary for Pareto stationarity. We formalize it in the next corollary.

\begin{corollary}
	Let $\bar{x} \in \Omega$ be an $L$-stationary point for problem \eqref{eq::mo-prob} with $L > 0$. Then, $\bar{x}$ satisfies the MOLZ conditions.
\end{corollary}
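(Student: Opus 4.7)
The plan is to obtain the corollary as an immediate consequence of the two results already available in the excerpt, without any new calculation. Specifically, I would chain Proposition~\ref{prop::basic_necessary_for_L} with Lemma~\ref{lem::MOLZ_necessary_for_BF}.

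First, I would observe that the hypothesis of the corollary is exactly the hypothesis of Proposition~\ref{prop::basic_necessary_for_L}, namely that $\bar{x}\in\Omega$ is $L$-stationary for \eqref{eq::mo-prob} with $L>0$. Applying that proposition directly yields that $\bar{x}$ is Pareto-stationary for \eqref{eq::mo-prob}, in the sense of Definition~\ref{def::pareto_stat}.

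Second, with Pareto-stationarity of $\bar{x}$ in hand, the hypothesis of Lemma~\ref{lem::MOLZ_necessary_for_BF} is met; this lemma states that any Pareto-stationary point automatically satisfies the MOLZ conditions, so I would invoke it to conclude that $\bar{x}$ satisfies MOLZ, which is precisely the claim of the corollary.

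There is essentially no obstacle here: the statement is a transitive closure of two implications that were both proved earlier, so no new analytic argument, no direction construction, and no reasoning about super support sets is needed. The only thing worth double-checking is that the constant $L>0$ assumption carries through correctly, but Proposition~\ref{prop::basic_necessary_for_L} requires exactly $L>0$ and Lemma~\ref{lem::MOLZ_necessary_for_BF} has no further assumption, so the chain is clean. The full written proof should therefore be one or two sentences citing the two prior results.
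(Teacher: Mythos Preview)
Your proposal is correct and matches the paper's own reasoning exactly: the corollary is presented there as the immediate combination of Proposition~\ref{prop::basic_necessary_for_L} and Lemma~\ref{lem::MOLZ_necessary_for_BF}, with no additional argument supplied.
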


Given Proposition \ref{prop::basic_necessary_for_L} and Lemma \ref{lem::sufficientB}, we can also state that, under convexity assumptions for $F$, $L$-stationarity is a sufficient condition for local weak Pareto optimality.
\begin{corollary}
	Assume that $F$ is component-wise convex and let $\bar{x} \in \Omega$ be an $L$-stationary point for problem \eqref{eq::mo-prob} with $L > 0$. Then, $\bar{x}$ is locally weakly Pareto optimal for \eqref{eq::mo-prob}.
\end{corollary}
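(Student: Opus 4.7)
The plan is to observe that this corollary is an immediate consequence of chaining together the two preceding results, so no fresh argument is needed. Specifically, I would invoke Proposition \ref{prop::basic_necessary_for_L} to conclude that, since $\bar{x}\in\Omega$ is $L$-stationary with $L>0$, it is Pareto-stationary for \eqref{eq::mo-prob}. Then, under the standing hypothesis that $F$ is component-wise convex, I would apply Lemma \ref{lem::sufficientB} to the Pareto-stationary point $\bar{x}$ to deduce that $\bar{x}$ is locally weakly Pareto optimal for \eqref{eq::mo-prob}, which is exactly the claim.

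There is essentially no obstacle here: both building blocks are already stated and proved in the excerpt, and the hypotheses of Proposition \ref{prop::basic_necessary_for_L} (namely $L>0$, $\bar{x}\in\Omega$, and $L$-stationarity) are exactly what the corollary assumes, while the hypothesis of Lemma \ref{lem::sufficientB} (component-wise convexity of $F$ plus Pareto-stationarity of $\bar{x}$) is precisely what we have after the first step. The only thing to be mindful of is that no additional assumption (such as a smoothness bound on $\nabla F$, or a specific choice of $L$) is secretly required; inspection of Proposition \ref{prop::basic_necessary_for_L} confirms that its proof only uses $L>0$, so the chaining goes through for any positive $L$.

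Accordingly, the written proof would consist of two short sentences: one citing Proposition \ref{prop::basic_necessary_for_L} to pass from $L$-stationarity to Pareto-stationarity, and one citing Lemma \ref{lem::sufficientB} to pass from Pareto-stationarity to local weak Pareto optimality under the convexity assumption. No auxiliary lemma, computation, or delicate choice of neighborhood is required, since the neighborhood is furnished directly by Lemma \ref{lem::sufficientB}.
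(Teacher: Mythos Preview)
Your proposal is correct and matches the paper's approach exactly: the paper introduces this corollary with the sentence ``Given Proposition \ref{prop::basic_necessary_for_L} and Lemma \ref{lem::sufficientB}, we can also state that\ldots'' and provides no further proof, treating it as an immediate chaining of those two results. Your observation that only $L>0$ is needed (no Lipschitz assumption) is also accurate.
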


In order to continue the analysis, we need to introduce a couple of notions. The first one is an assumption similar to the one used for $L$-stationarity in \cite{beck13}, while the second one concerns an adaptation of the \textit{descent lemma} to MOO.

\begin{assumption}
	\label{ass::lipschitz}
	For all $j \in \{1,\ldots, m\}$, $\nabla f_j$ is Lipschitz-continuous over $\mathbb{R}^n$ with constant $L(f_j)$, i.e., $\|\nabla f_j(x) - \nabla f_j(y)\| \le L(f_j)\|x - y\|$ for all $x, y \in \mathbb{R}^n.$

\end{assumption}

In what follows, we indicate with $L(F) \in \mathbb{R}^m$ the vector of the Lipschitz constants, i.e., $L(F) = (L(f_1),\ldots, L(f_m))^\top$.

\begin{lemma}[{\cite[Proposition A.24]{bertsekas1999nonlinear}}]
	\label{lem::descent-lemma}
	Let $f_j$, $j =1,\ldots,m$, be a continuously differentiable function satisfying Assumption \ref{ass::lipschitz}. Then, for all $L \ge L(f_j)$ and any $x, d \in \mathbb{R}^n$, we have that
	$f_j(x + d) \le f_j(x) + \nabla f_j(x)^\top d + \frac{L}{2}\|d\|^2$.
\end{lemma}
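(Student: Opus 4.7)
The plan is to establish the descent inequality through the classical route: represent the one-dimensional increment of $f_j$ along the segment from $x$ to $x+d$ via the fundamental theorem of calculus, then quantify the gap between this increment and its linear model using the Lipschitz continuity of $\nabla f_j$.

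Concretely, I would start by introducing the auxiliary scalar function $\varphi(t) = f_j(x+td)$ for $t\in[0,1]$. Since $f_j$ is continuously differentiable, $\varphi$ is differentiable with $\varphi'(t) = \nabla f_j(x+td)^\top d$, so that
\[
f_j(x+d) - f_j(x) \;=\; \varphi(1)-\varphi(0) \;=\; \int_0^1 \nabla f_j(x+td)^\top d \; dt.
\]
Adding and subtracting $\nabla f_j(x)^\top d$ inside the integral gives
\[
f_j(x+d) \;=\; f_j(x) + \nabla f_j(x)^\top d + \int_0^1 \bigl(\nabla f_j(x+td) - \nabla f_j(x)\bigr)^\top d \; dt.
\]

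Next I would bound the remainder integral. By the Cauchy–Schwarz inequality and Assumption \ref{ass::lipschitz},
\[
\bigl|\bigl(\nabla f_j(x+td) - \nabla f_j(x)\bigr)^\top d\bigr| \;\le\; \|\nabla f_j(x+td) - \nabla f_j(x)\| \, \|d\| \;\le\; L(f_j)\,\|td\|\,\|d\| \;=\; L(f_j)\,t\,\|d\|^2.
\]
Integrating over $[0,1]$ yields an upper bound of $L(f_j)\|d\|^2 \int_0^1 t \,dt = \tfrac{L(f_j)}{2}\|d\|^2$, which, since $L\ge L(f_j)$, is at most $\tfrac{L}{2}\|d\|^2$. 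Combining this with the identity above gives the claimed inequality.

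There is essentially no obstacle here: the result is the standard quadratic upper bound for Lipschitz-gradient functions, and the argument is purely one-dimensional along the segment $[x,x+d]$. The only point requiring minimal care is ensuring that the integrand is measurable and integrable, which follows from the continuity of $\nabla f_j$, and that the Cauchy–Schwarz bound is applied pointwise before integration.
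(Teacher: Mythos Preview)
Your proof is correct and is exactly the classical argument for the descent lemma; the paper does not give its own proof but simply cites \cite[Proposition~A.24]{bertsekas1999nonlinear}, whose proof proceeds via the same integral representation and Lipschitz bound you used.
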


We are ready to show that, for specific $L$ values, the $L$-stationarity condition is necessary for weak Pareto optimality.

\begin{proposition}
	\label{prop::necessary_L}
	Let Assumption \ref{ass::lipschitz} hold, $\bar{x} \in \Omega$ be a weakly Pareto optimal point for problem \eqref{eq::mo-prob} and $L > \max_{j=1,\ldots,m}L(f_j)$. Then, $\bar{x}$ is $L$-stationary for \eqref{eq::mo-prob}. Moreover, we have that $v_L(\bar{x}) = \{\boldsymbol{0}_n\}$, i.e., the set $v_L(\bar{x})$ is a singleton.
\end{proposition}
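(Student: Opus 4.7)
The plan is to proceed by contradiction, relying crucially on Lemma \ref{lem::descent-lemma} (the descent lemma) together with the strict inequality $L > \max_j L(f_j)$. The non-positivity side of $\theta_L(\bar{x})$ is already given by item 2 of Lemma \ref{lemma:continuity-theta}, so the work is to rule out $\theta_L(\bar{x})<0$ and, separately, the existence of a nonzero optimizer.

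First I would suppose $\theta_L(\bar{x})<0$ and pick any $d^{\star}\in v_L(\bar{x})$. Since the objective of problem \eqref{eq::MOO_L_stationarity} equals $\theta_L(\bar x)<0$ at $d^\star$, in particular one has
\[
\nabla f_j(\bar{x})^{\top}d^{\star}+\tfrac{L}{2}\|d^{\star}\|^{2}<0 \qquad \forall\,j\in\{1,\ldots,m\}.
\]
Applying Lemma \ref{lem::descent-lemma} to each $f_j$ with the step $d^{\star}$ (valid because $L>L(f_j)$) gives
\[
f_j(\bar{x}+d^{\star})\le f_j(\bar{x})+\nabla f_j(\bar{x})^{\top}d^{\star}+\tfrac{L(f_j)}{2}\|d^{\star}\|^{2}\le f_j(\bar{x})+\nabla f_j(\bar{x})^{\top}d^{\star}+\tfrac{L}{2}\|d^{\star}\|^{2}<f_j(\bar{x}),
\]
for every $j$. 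Since $d^{\star}\in\mathcal{D}_L(\bar{x})$ means $\bar{x}+d^{\star}\in\Omega$, this yields $F(\bar{x}+d^{\star})<F(\bar{x})$ with $\bar{x}+d^\star\in\Omega$, contradicting the weak Pareto optimality of $\bar{x}$. Hence $\theta_L(\bar{x})=0$.

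For the uniqueness claim, I would suppose there is some $v\in v_L(\bar{x})$ with $v\neq\boldsymbol{0}_n$. Since $\theta_L(\bar x)=0$, optimality of $v$ gives $\max_{j}\nabla f_j(\bar{x})^{\top}v+\frac{L}{2}\|v\|^{2}=0$, i.e. $\nabla f_j(\bar{x})^{\top}v\le -\frac{L}{2}\|v\|^{2}$ for all $j$. Using the descent lemma again,
\[
f_j(\bar{x}+v)\le f_j(\bar{x})+\nabla f_j(\bar{x})^{\top}v+\tfrac{L(f_j)}{2}\|v\|^{2}\le f_j(\bar{x})-\tfrac{L-L(f_j)}{2}\|v\|^{2}<f_j(\bar{x}),
\]
where the strict inequality uses $v\neq\boldsymbol{0}_n$ and $L>L(f_j)$. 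This again contradicts the weak Pareto optimality of $\bar x$, so $v_L(\bar x)=\{\boldsymbol{0}_n\}$.

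There is no substantive obstacle: the whole argument is an adaptation of Beck--Eldar's scalar reasoning to the multi-objective setting, and it goes through cleanly because the descent lemma is componentwise and the ordering used to define weak Pareto optimality is compatible with taking the coordinate-wise inequalities we obtain from it. The only point that requires minor care is making sure that the bound $\nabla f_j(\bar x)^\top v \le -\frac{L}{2}\|v\|^2$ for \emph{every} $j$ follows from the $\max$ being nonpositive, which is immediate since $\frac{L}{2}\|v\|^2$ does not depend on $j$.
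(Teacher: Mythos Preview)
Your proof is correct and follows essentially the same approach as the paper: both argue by contradiction via the descent lemma (Lemma~\ref{lem::descent-lemma}) and the strict inequality $L>\max_j L(f_j)$ to produce a feasible point strictly dominating $\bar{x}$. The only cosmetic difference is that the paper handles both conclusions at once by assuming there exists a nonzero $\hat d\in\mathcal{D}_L(\bar x)$ with $\max_j\nabla f_j(\bar x)^\top\hat d+\tfrac{L}{2}\|\hat d\|^2\le 0$ (which covers either $\theta_L(\bar x)<0$ or a nonzero optimizer when $\theta_L(\bar x)=0$), whereas you split these into two separate cases.
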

\begin{proof}
	By contradiction, let us assume that either $\bar{x}$ is not $L$-stationary for \eqref{eq::mo-prob} or $v_L(\bar{x}) \setminus \{\boldsymbol{0}_n\} \neq \emptyset$. Then, there exists a direction $\hat{d} \in \mathcal{D}_L(\bar{x})$ such that $\hat{d} \ne \boldsymbol{0}_n$ and 
	\begin{equation}
		\label{eq::Lstat-absurd}
		\max_{j \in \left\{1,\ldots, m\right\}}\nabla f_j(\bar{x})^\top\hat{d} + \frac{L}{2}\|\hat{d}\|^2 \le 0.
	\end{equation}
	By Lemma \ref{lem::descent-lemma}, we have that, for all $h \in \{1,\ldots, m\}$,
	\begin{equation}
		\label{eq::dl_applied}
		f_{h}\left(\bar{x} + \hat{d}\right) \le f_{h}(\bar{x}) + \nabla f_{h}(\bar{x})^\top\hat{d} + \frac{L\left(f_{h}\right)}{2}\|\hat{d}\|^2.
	\end{equation}
	From Equation \eqref{eq::Lstat-absurd}, we get that $\nabla f_{h}(\bar{x})^\top\hat{d} \le \max_{j \in \{1,\ldots, m\}}\nabla f_j(\bar{x})^\top\hat{d} \le -\frac{L}{2}\|\hat{d}\|^2,$
	where the first inequality comes from the definition of maximum operator. Recalling the hypothesis on $L$ and the non-negativity of the norm, we combine \eqref{eq::dl_applied} and the last result obtaining that
	\begin{gather*}
		f_{h}\left(\bar{x} + \hat{d}\right) \le f_{h}(\bar{x}) + \frac{L\left(f_{h}\right) - L}{2}\|\hat{d}\|^2 < f_{h}(\bar{x}) + \frac{L\left(f_{h}\right) - \max_{j \in \left\{1,\ldots, m\right\}}L\left(f_{j}\right)}{2}\|\hat{d}\|^2.
	\end{gather*}
	Thus, for all $h \in \{1,\ldots, m\}$ we have $f_{h}(\bar{x} + \hat{d}) - f_{h}(\bar{x}) < \frac{\|\hat{d}\|^2}{2}(L(f_{h}) - \max_{j \in \{1,\ldots, m\}}L(f_{j})) \le 0,$
	leading to the conclusion that we have found a point $\bar{x} + \hat{d} \in \Omega$ such that $F(\bar{x} + \hat{d}) < F(\bar{x})$. This is a contradiction since, by hypothesis, $\bar{x}$ is weakly Pareto optimal for \eqref{eq::mo-prob}. Thus, we get the thesis.
\end{proof}

The analysis on $L$-stationarity highlights how the choice of the $L$ value could be crucial: if $L$ is too small, $L$-stationarity might not be a necessary optimality condition; on the other hand, if $L$ gets too large, all the Pareto stationary points also become $L$-stationary. This behavior can be better noticed with an example.
\begin{figure}[h]
	\centering
	\includegraphics[width=0.675\textwidth]{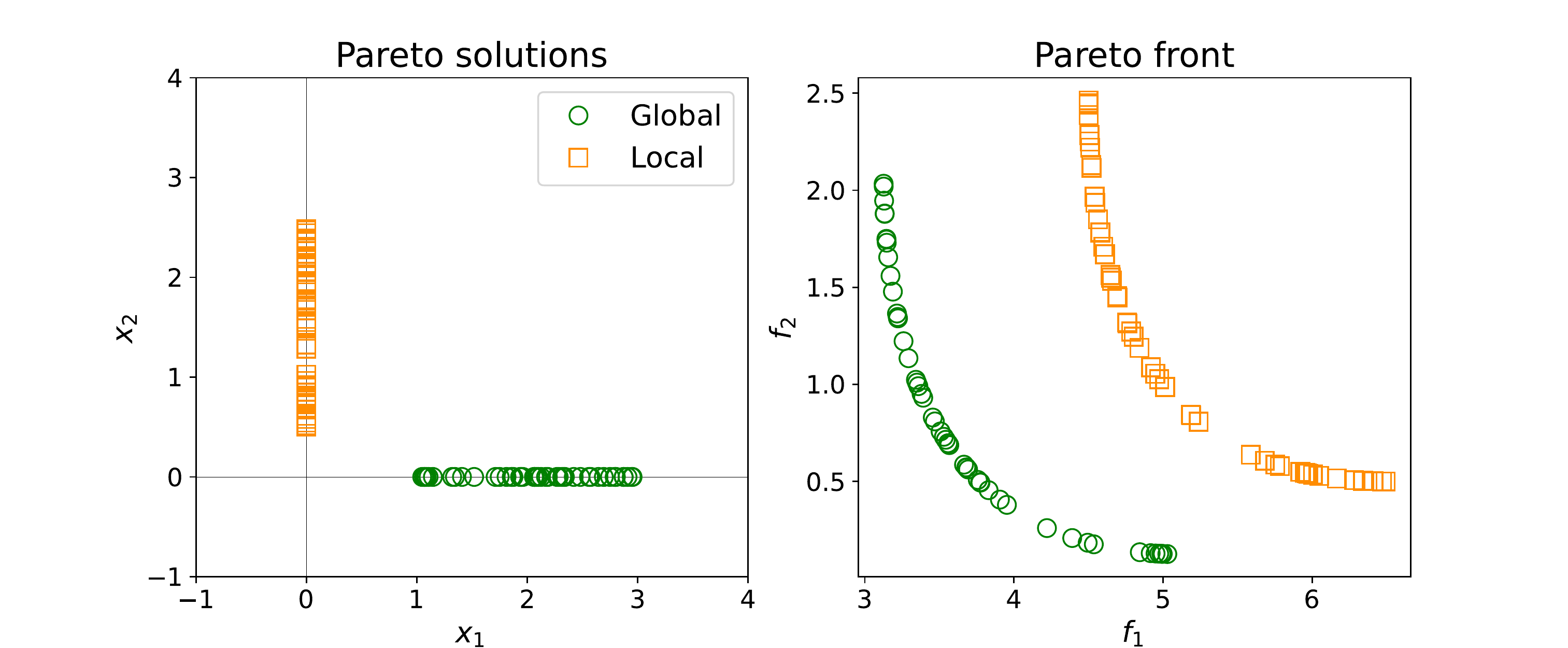}
	\caption{Pareto optimal solutions and Pareto front of problem of Example \ref{ex::L-stat}.}
	\label{fig::Example_problem}
\end{figure}
\begin{example}
	\label{ex::L-stat}
	Let us consider the following optimization problem:
	\begin{equation*}
		\begin{aligned}
			\min_{x\in \mathbb{R}^2}\quad \frac{1}{2}\left((x_1 - 3)^2 + (x_2 - 2.5)^2, (x_1 - 1)^2 + (x_2 - 0.5)^2\right)^\top\; \text{s.t.}\;\left\|x\right\|_0 \le 1.
		\end{aligned}
	\end{equation*}
	The Lipschitz constant of the gradient of both objective functions $f_j$ is $L(f_j) = 1$.
	In Figure \ref{fig::Example_problem}, the Pareto optimal solutions and the Pareto front are plotted: the problem has global optimal solutions corresponding to points with $x_1 \ne 0$; the local ones are characterized by the second component $x_2 \ne 0$. 
	By Lemmas \ref{lem::BF_necessary_for_Pareto}-\ref{lem::MOLZ_necessary_for_BF}, it follows that all the considered points are Pareto-stationary and satisfy the MOLZ conditions.
	In Figure \ref{fig::Example_problem_L}, we show which Pareto solutions are $L$-stationary, considering four different choices for $L$. If $L$ is chosen too small (Figure \ref{fig::L_0.75}), some global Pareto solutions do not result to be $L$-stationary. As stated in Proposition \ref{prop::necessary_L}, the $L$-stationarity condition turns out to be necessary for Pareto optimality for an $L$ value greater than the Lipschitz constants (Figure \ref{fig::L_1.01} where $L = 1.01$). On the other hand, a too high value makes the condition rather weak: in Figure \ref{fig::L_1.25} ($L = 1.25$), even some local Pareto solutions are $L$-stationary. The situation is further stressed in Figure \ref{fig::L_2.0} where $L = 2$ and all Pareto-stationary points are also $L$-stationary.
	\begin{figure}[h]
		\centering
		\subfloat[\label{fig::L_0.75}]{\includegraphics[width=0.375\textwidth]{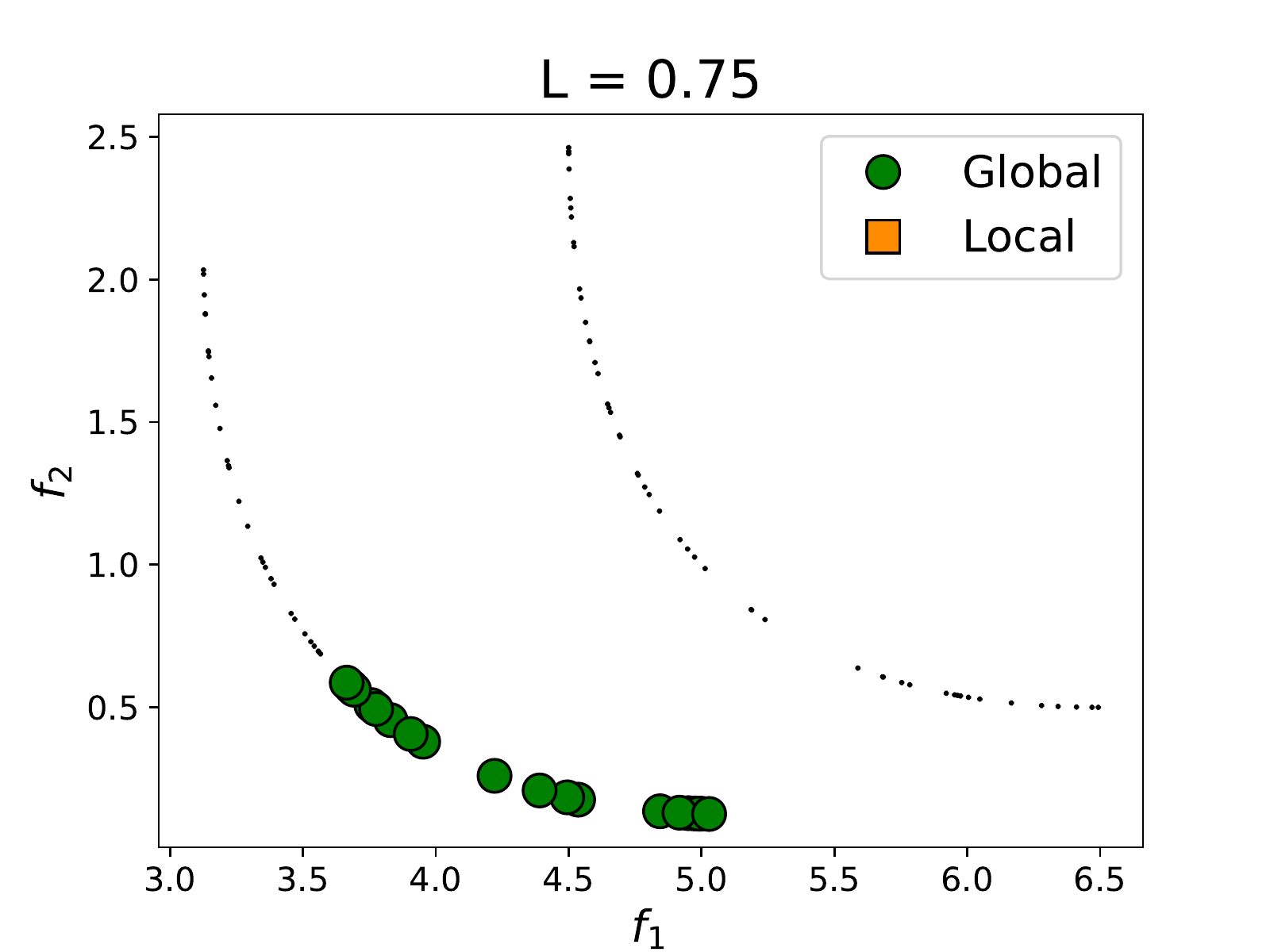}}
		\hfil
		\subfloat[\label{fig::L_1.01}]{\includegraphics[width=0.375\textwidth]{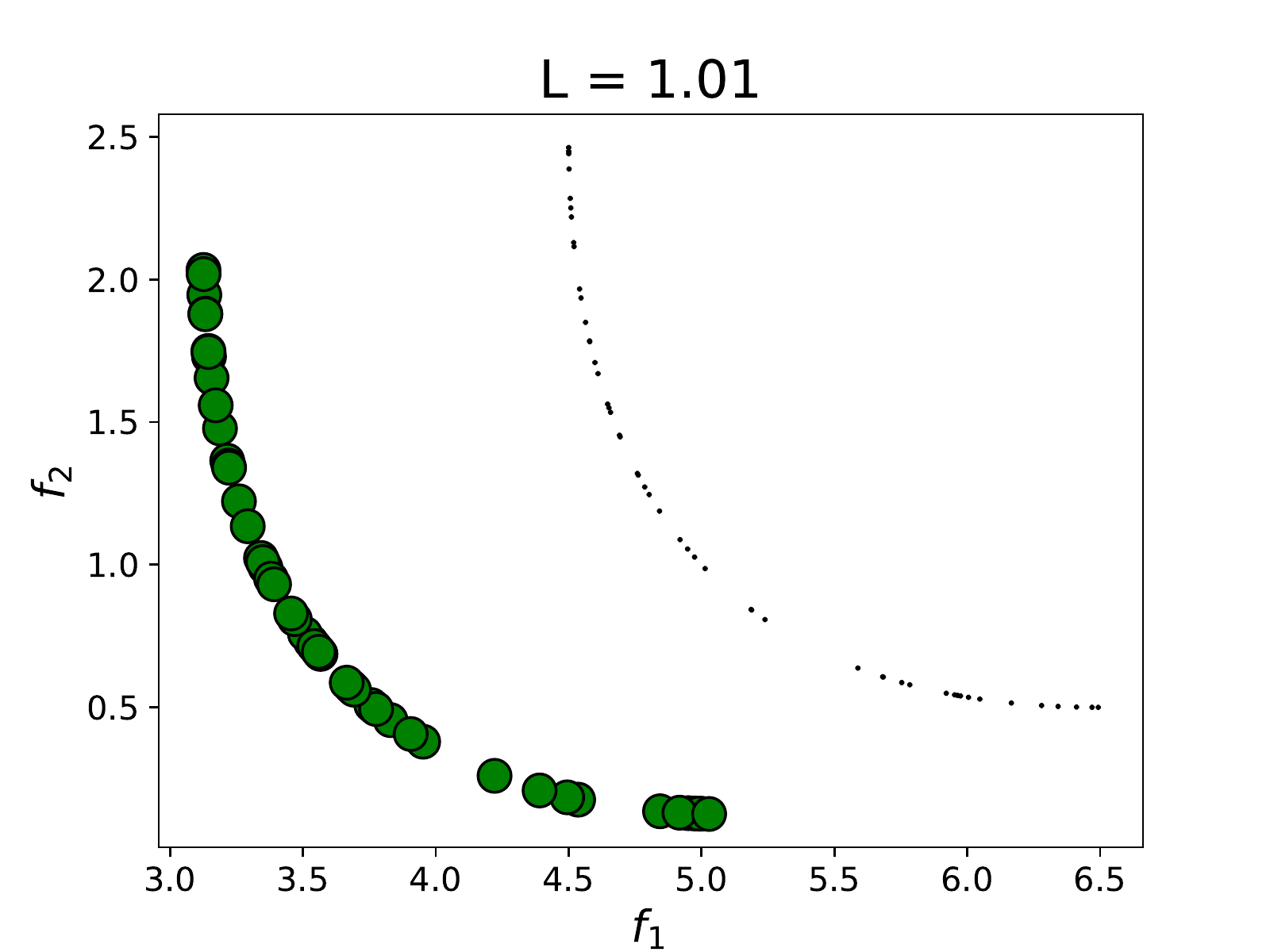}}
		\\
		\subfloat[\label{fig::L_1.25}]{\includegraphics[width=0.375\textwidth]{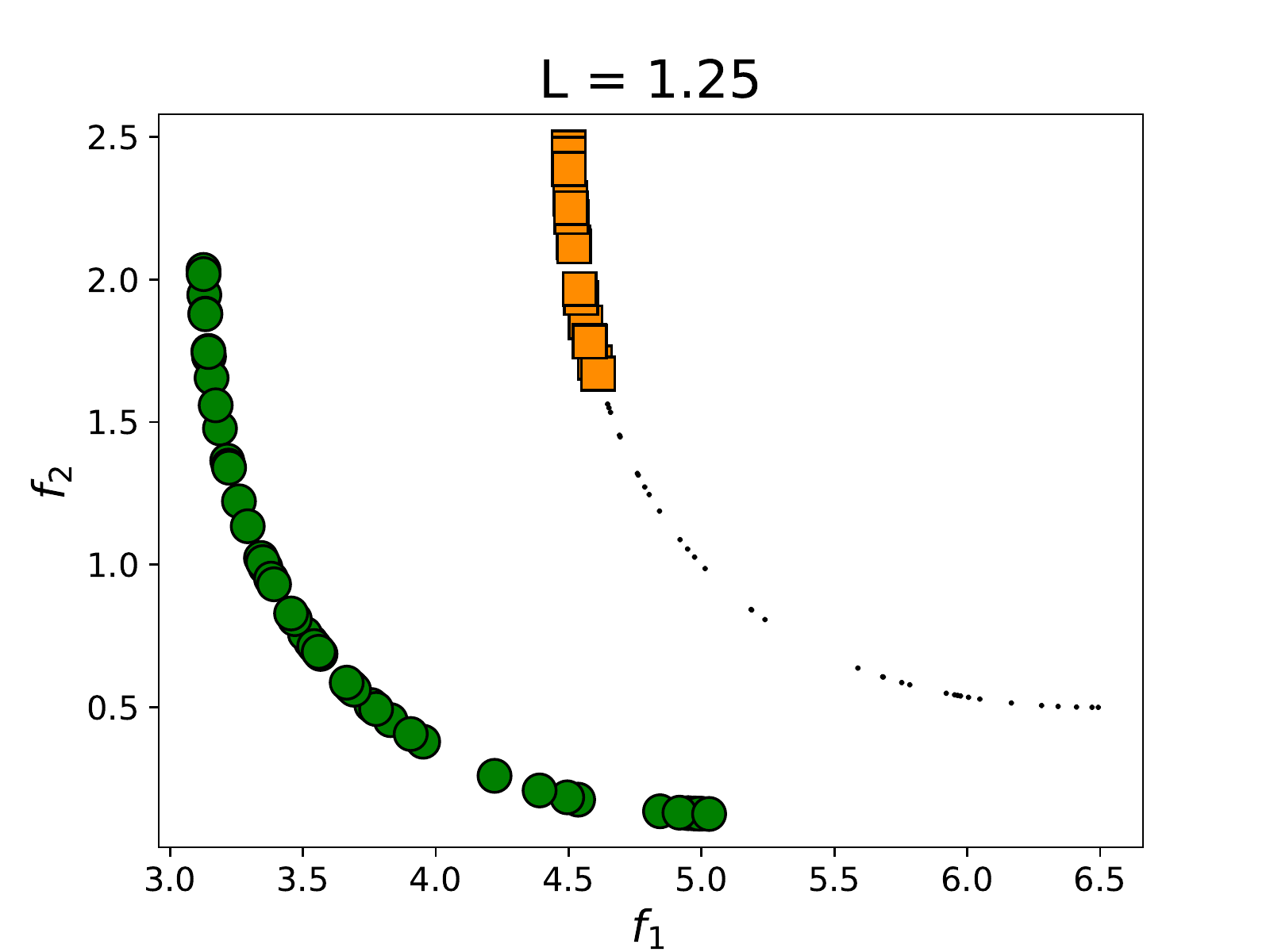}}
		\hfil
		\subfloat[\label{fig::L_2.0}]{\includegraphics[width=0.375\textwidth]{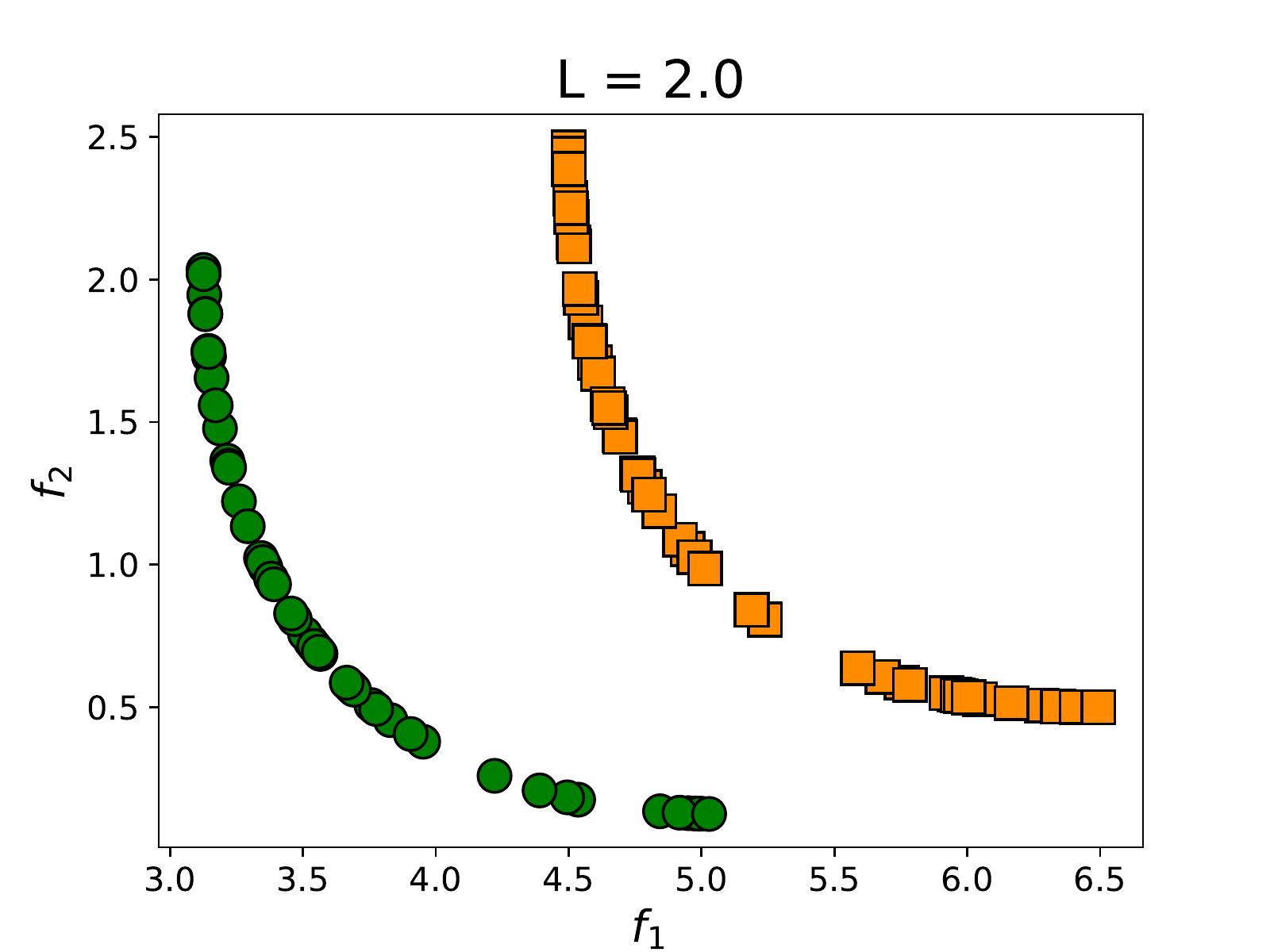}}
		\caption{$L$-stationary points in the Pareto front of problem of Example \ref{ex::L-stat} ($L(F) = [1, 1]^\top$) for different values of $L$.}
		\label{fig::Example_problem_L}
	\end{figure}
\end{example}

\section{New Algorithmic Approaches for Sparse MOO Problems}
\label{sec::IHT_ext}

In this section, we propose two procedures to solve cardinality-constrained MOO problems. The first one can be seen as an extension of the \textit{Iterative Hard Thresholding} (\texttt{IHT}) algorithm \cite{beck13}; the second one is a front approximation approach that takes as input candidate solutions, possibly associated with different support sets, and then spans the portions of the Pareto front associated with those supports. We report their schemes and discuss their properties in separate sections.

\subsection{Multi-Objective Iterative Hard Thresholding}
\label{subsec::MOIHT}
The first procedure we introduce is the \textit{Multi-Objective Iterative Hard Thresholding} algorithm. In the remainder of the paper, we refer to it as \texttt{MOIHT}.
The scheme of the method is reported in Algorithm \ref{alg::MO_IHT}.
At each iteration of \texttt{MOIHT}, the current solution $x_k$ is updated solving problem \eqref{eq::MO-IHT-problem}. The execution continues until an $L$-stationary point for \eqref{eq::mo-prob} is found. 
\begin{remark}
	\label{rem::feasibility-MO-IHT}
	At each iteration $k$, the solution $x_{k + 1}$ generated by \texttt{MOIHT} is feasible for \eqref{eq::mo-prob}. Indeed, the feasibility easily follows by definition of $\mathcal{D}_L(x_k)$.
\end{remark}
\begin{remark}
	It is very important to underline that Step \ref{step:update} is a practical operation that can be effectively implemented in the general case. Problem \eqref{eq::MO-IHT-problem} can indeed be solved up to global optimality, for example with mixed-integer programming techniques (see, e.g., \cite{bertsimas16,bertsimas2020sparse}). Defining a sufficiently large scalar $M>0$, the problem can be equivalently reformulated as
	\begin{align*}
			\min_{t,d,\delta}\;t + \frac{L}{2}\|d\|^2
			\quad\text{ s.t. }\quad &\nabla f_j(\bar{x})^\top d \le t\quad\forall j=1,\ldots,m,\qquad
			 \boldsymbol{1}_n^\top\delta \le s,\\ &
			 -M\delta \le \bar{x} + d \le M\delta,\qquad {t \in \mathbb{R},\, d \in \mathbb{R}^n,\,\delta \in \{0, 1\}^n}.
	\end{align*}
\end{remark}
\begin{algorithm}[htbp]
	\caption{\texttt{Multi-Objective Iterative Hard Thresholding}} \label{alg::MO_IHT}
	Input: $x_0 \in \Omega$, $L > \max_{j \in \{1,\ldots, m\}}L(f_j)$.\\
	$k = 0$ \\
	\While{$x_k$ is not $L$-stationary for problem \eqref{eq::mo-prob}}{
		Compute
		\begin{equation}
			\label{eq::MO-IHT-problem}
			v_L\left(x_k\right) = \argmin_{d \in \mathcal{D}_L\left(x_k\right)}\max_{j \in \left\{1,\ldots, m\right\}} \nabla f_j(x_k)^\top d + \frac{L}{2}\left\|d\right\|^2
		\end{equation} \label{step:update}\\
		Let $d_k^L \in v_L(x_k)$\\
		$x_{k + 1} = x_k + d_k^L$ \label{line::new_xk+1}\\
		Let $k = k + 1$
	}
	\Return $x_k$
\end{algorithm}

\subsubsection{Convergence Analysis}

In this section, we analyze the method from a theoretical perspective. Before proving the main convergence result, we need to state an additional assumption on $F$ and prove a technical lemma.
\begin{assumption}
	\label{ass::bounded-level-set}
	$F$ has bounded level sets in the multi-objective sense, i.e., the set $\mathcal{L}_F(z) = \{x \in \Omega \mid F(x) \le z\}$ is bounded for any $z \in \mathbb{R}^m$.
\end{assumption}
\begin{lemma}
	\label{lem::technical-lemma-MO-IHT}
	Let Assumptions \ref{ass::lipschitz}-\ref{ass::bounded-level-set} hold and $\{x_k\}$ be the sequence generated by Algorithm \ref{alg::MO_IHT} with constant $L > \max_{j \in \{1,\ldots, m\}}L(f_j)$. Then:
	\begin{enumerate}
		\item for all $k$, $F(x_k) - F(x_{k + 1}) \ge \frac{1}{2}\|x_k - x_{k + 1}\|^2(\boldsymbol{1}_mL - L(F));$
		\item for all $k$, if $x_k \ne x_{k + 1}$, then $F(x_{k + 1}) < F(x_k)$;
		\item for all $j \in \{1,\ldots, m\}$, the sequence $\{f_j(x_k)\}$ is non-increasing;
		\item the sequence $\{F(x_k)\}$ converges;
		\item $\lim_{k \rightarrow \infty}\|x_k - x_{k + 1}\|^2 = 0$.
	\end{enumerate}
\end{lemma}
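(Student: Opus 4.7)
The plan is to derive item 1 first, since all the other items are downstream consequences. For a fixed index $j$, I would start by applying the descent lemma (Lemma \ref{lem::descent-lemma}) with $L(f_j)$ to the step $x_{k+1} = x_k + d_k^L$, which yields
\[
f_j(x_{k+1}) \le f_j(x_k) + \nabla f_j(x_k)^\top d_k^L + \tfrac{L(f_j)}{2}\|d_k^L\|^2.
\]
Then I would exploit the optimality of $d_k^L$ in problem \eqref{eq::MO-IHT-problem}: since $\boldsymbol{0}_n \in \mathcal{D}_L(x_k)$ always, we have $\max_j \nabla f_j(x_k)^\top d_k^L + \tfrac{L}{2}\|d_k^L\|^2 \le 0$, hence $\nabla f_j(x_k)^\top d_k^L \le -\tfrac{L}{2}\|d_k^L\|^2$ for every $j$. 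Substituting into the descent inequality gives $f_j(x_k) - f_j(x_{k+1}) \ge \tfrac{L - L(f_j)}{2}\|x_k - x_{k+1}\|^2$, and collecting the $m$ scalar inequalities into a single vector inequality yields claim 1.

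Claim 2 is an immediate strict-inequality version of claim 1: under the hypothesis $L > \max_j L(f_j)$, every component of $\boldsymbol{1}_m L - L(F)$ is strictly positive, so whenever $\|x_k - x_{k+1}\| > 0$ each component of $F(x_k) - F(x_{k+1})$ is strictly positive. Claim 3 is then just the non-strict version: each scalar sequence $\{f_j(x_k)\}$ is monotone non-increasing.

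For claim 4, I would combine monotonicity from claim 3 with Assumption \ref{ass::bounded-level-set}. Since $x_k \in \mathcal{L}_F(F(x_0))$ for all $k$ by claim 3, and this set is bounded, each continuous $f_j$ is bounded below on it; a bounded-below non-increasing real sequence converges, so $\{F(x_k)\}$ converges component-wise. Finally, for claim 5 I would telescope claim 1 applied to, say, $j=1$: summing $f_1(x_k) - f_1(x_{k+1}) \ge \tfrac{L - L(f_1)}{2}\|x_k - x_{k+1}\|^2$ over $k$ and using that $\{f_1(x_k)\}$ converges yields $\sum_k \|x_k - x_{k+1}\|^2 < \infty$, which forces $\|x_k - x_{k+1}\| \to 0$.

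There is no serious obstacle here — the whole lemma is a routine chain of consequences from the descent lemma and the optimality of $d_k^L$. The only subtle point I would be careful about is the very first reduction: writing $\nabla f_j(x_k)^\top d_k^L \le -\tfrac{L}{2}\|d_k^L\|^2$ relies on comparing $\theta_L(x_k)$ with the candidate direction $\boldsymbol{0}_n$, and one must remember that the bound on the $\max$ transfers to each individual $j$. Once that observation is in place, items 2–5 follow in order with only monotone-convergence and telescoping-sum arguments.
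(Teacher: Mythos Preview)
Your proposal is correct and follows essentially the same approach as the paper: item~1 is obtained by combining the descent lemma with the comparison $\theta_L(x_k)\le 0$ (i.e., evaluating the subproblem at $d=\boldsymbol{0}_n$), and items~2--5 are derived as immediate consequences. The only cosmetic difference is in item~5, where the paper simply passes to the limit in the scalar inequality using the convergence of $\{f_j(x_k)\}$, while you telescope to obtain the (slightly stronger) summability $\sum_k\|x_k-x_{k+1}\|^2<\infty$; both arguments are standard and equivalent for the purpose of the lemma.
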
 
\begin{proof}
	\begin{enumerate}
		\item The thesis can be proved making an argument similar to that of Proposition \ref{prop::necessary_L} and reminding that $x_{k + 1} = x_k + d_k^L$, with $d_k^L \in v_L(x_k)$ (Step \ref{line::new_xk+1} of Algorithm \ref{alg::MO_IHT}).
		\item It follows directly from Point 1., recalling that $L>L(f_j)$ for all $j \in \{1,\ldots, m\}$ and $\|x_k-x_{k+1}\|>0$.
		\item By Point 1. and the hypothesis on $L$, we have that, for all $k$ and $j \in \{1,\ldots, m\}$, $f_j(x_{k + 1}) \le f_j(x_k)$. Thus, for all $j \in \{1,\ldots, m\}$, the sequence $\{f_j(x_k)\}$ is non-increasing.
		\item It follows from Assumption \ref{ass::bounded-level-set} and Point 3..
		\item From Point 1., we have that, for all $k$ and $j \in \{1,\ldots, m\}$,
		$\frac{L - L(f_{j})}{2}\|x_k - x_{k + 1}\|^2 \le f_{j}(x_k) - f_{j}(x_{k + 1}).$
		Since $f_{j}$ is continuous, we can take the limit for $k \rightarrow \infty$ on both sides of the inequality:
		$\lim_{k \rightarrow \infty}\frac{L - L(f_{j})}{2}\|x_k - x_{k + 1}\|^2 \le \lim_{k \rightarrow \infty}f_{j}(x_k) - f_{j}(x_{k + 1}) = 0,$
		where the equality comes from Point 4.. By the definition of $L$ and the non-negativity of the norm, the statement is proved.
	\end{enumerate}
\end{proof}
\begin{proposition}
	\label{prop:moiht-conv}
	Let Assumptions \ref{ass::lipschitz}-\ref{ass::bounded-level-set} hold and $\{x_k\}$ be the sequence generated by Algorithm \ref{alg::MO_IHT} with constant $L > \max_{j \in \{1,\ldots, m\}}L(f_j)$. Then, the sequence admits cluster points, each one being $L$-stationary for problem \eqref{eq::mo-prob}.
\end{proposition}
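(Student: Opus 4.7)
The plan is to exploit Lemma \ref{lem::technical-lemma-MO-IHT} together with the continuity of $\theta_L$ from Lemma \ref{lemma:continuity-theta}. First, I would establish that $\{x_k\}$ admits cluster points: by Points 3--4 of Lemma \ref{lem::technical-lemma-MO-IHT}, the sequence $\{F(x_k)\}$ is monotone non-increasing, so $\{x_k\}$ is entirely contained in the level set $\mathcal{L}_F(F(x_0))$, which is bounded by Assumption \ref{ass::bounded-level-set}. Therefore, by the Bolzano-Weierstrass theorem, the sequence admits at least one cluster point $\bar{x}$, and since $\Omega$ is closed with $\{x_k\}\subset\Omega$, we have $\bar x\in\Omega$.

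Next, let $K\subseteq\mathbb{N}$ be such that $\{x_k\}_K\to\bar{x}$. Recalling that $d_k^L = x_{k+1}-x_k$ is the optimal solution of the subproblem \eqref{eq::MO-IHT-problem}, we can write
\begin{equation*}
    \theta_L(x_k) = \max_{j\in\{1,\ldots,m\}}\nabla f_j(x_k)^\top (x_{k+1}-x_k) + \frac{L}{2}\|x_{k+1}-x_k\|^2.
\end{equation*}
Since $\{x_k\}$ is bounded and each $\nabla f_j$ is continuous, the quantities $\|\nabla f_j(x_k)\|$ are uniformly bounded in $k$; by Cauchy-Schwarz,
\begin{equation*}
    \Big|\max_{j\in\{1,\ldots,m\}}\nabla f_j(x_k)^\top(x_{k+1}-x_k)\Big| \le \max_{j\in\{1,\ldots,m\}}\|\nabla f_j(x_k)\|\,\|x_{k+1}-x_k\|.
\end{equation*}
Invoking Point 5 of Lemma \ref{lem::technical-lemma-MO-IHT}, $\|x_{k+1}-x_k\|\to 0$, so the right-hand side above tends to zero, and so does $\frac{L}{2}\|x_{k+1}-x_k\|^2$. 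Hence $\theta_L(x_k)\to 0$ as $k\to\infty$ (along the whole sequence, hence in particular along $K$).

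Finally, I would conclude using the continuity of the mapping $x\mapsto\theta_L(x)$ established in Point 3 of Lemma \ref{lemma:continuity-theta}: taking the limit as $K\ni k\to\infty$,
\begin{equation*}
    \theta_L(\bar x) = \lim_{K\ni k\to\infty}\theta_L(x_k) = 0,
\end{equation*}
which, by Definition \ref{def:L-stat}, means that $\bar{x}$ is $L$-stationary for \eqref{eq::mo-prob}. The only conceptual subtlety in this plan is ensuring that the $\theta_L(x_k)\to 0$ argument goes through in spite of the nonconvexity of $\mathcal{D}_L(x_k)$, but this is handled because we only need the optimal value (not the optimal point) to vanish, and the feasibility of $d_k^L$ together with the boundedness of the iterates and the convergence of consecutive increments to zero are enough for the squeezing argument.
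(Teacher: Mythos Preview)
Your argument is correct and is in fact more streamlined than the paper's. Both proofs establish the existence of cluster points in the same way (monotonicity of $\{F(x_k)\}$ from Lemma~\ref{lem::technical-lemma-MO-IHT} together with Assumption~\ref{ass::bounded-level-set}), but they diverge on the stationarity part. The paper argues by contradiction: assuming a limit point $\bar x$ is not $L$-stationary, it uses continuity of $\theta_L$ implicitly to obtain $\theta_L(x_k)\le -\hat\varepsilon$ along the subsequence, and then reapplies the descent lemma (Lemma~\ref{lem::descent-lemma}) to produce a telescoping lower bound $f_h(x_{k_1})-f_h(x_{k_2+1})\ge\hat\varepsilon+\tfrac{L-L(f_h)}{2}\|x_{k_2+1}-x_{k_2}\|^2$, which contradicts convergence of $\{f_h(x_k)\}$. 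Your route instead observes that $\theta_L(x_k)$ equals the subproblem value at $d_k^L=x_{k+1}-x_k$, bounds it via Cauchy--Schwarz and the uniform boundedness of $\|\nabla f_j(x_k)\|$ on the level set, and invokes Point~5 of Lemma~\ref{lem::technical-lemma-MO-IHT} to get $\theta_L(x_k)\to 0$ along the \emph{entire} sequence; continuity of $\theta_L$ (Lemma~\ref{lemma:continuity-theta}) then transfers this to any cluster point. This is a genuinely different and shorter argument: it fully exploits the two lemmas already in place and avoids re-deriving descent-type inequalities. The paper's approach, by contrast, is more self-contained in that the contradiction could in principle be obtained without explicit reference to the continuity of $\theta_L$, at the cost of some redundancy with Lemma~\ref{lem::technical-lemma-MO-IHT}.
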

\begin{proof}
	First, we prove that the sequence admits limit points. By Lemma \ref{lem::technical-lemma-MO-IHT}, we deduce that, for all $k$,
	$F(x_k) \le F(x_{k - 1}) \le \ldots \le F(x_0).$
	Moreover, as noted in Remark \ref{rem::feasibility-MO-IHT}, $x_k \in \Omega$ for all $k$. Thus, we have that
	$x_k \in \mathcal{L}_F(F(x_0))\; \forall k.$
	Since Assumption \ref{ass::bounded-level-set} holds, we conclude that the sequence $\{x_k\}$ is bounded and it thus admits limit points.
	Now, by contradiction, let us suppose there exists a subsequence $K \subseteq \{0,1,\ldots\}$ such that 
	\begin{equation}
		\label{eq::lim_barx}
		\lim_{\substack{k \rightarrow \infty \\ k \in K}}x_k = \bar{x}
	\end{equation}
	and $\bar{x}$ is not $L$-stationary for problem \eqref{eq::mo-prob}. Then, there exists $\hat{\varepsilon} > 0$ such that $\theta_L(x_k) \le -\hat{\varepsilon} < 0$ for all $k \in K$.
	By the instructions of the algorithm, at each iteration $k$, $d_k^L \in v_L(x_k)$ solves problem \eqref{eq::MO-IHT-problem}. Moreover, by Step \ref{line::new_xk+1}, $d_k^L = x_{k+1} - x_k$. Thus,
	$
		\max_{j \in \{1,\ldots, m\}}\nabla f_j(x_k)^\top(x_{k + 1} - x_k) + \frac{L}{2}\|x_{k + 1} - x_k\|^2 = \theta_L(x_k) \le -\hat{\varepsilon},
	$
	which can be rewritten as
	\begin{equation}
		\label{eq::max_varepsilon_L}
		\max_{j \in \left\{1,\ldots, m\right\}}\nabla f_j(x_k)^\top\left(x_{k + 1} - x_k\right) \le -\hat{\varepsilon} - \frac{L}{2}\left\|x_{k + 1} - x_k\right\|^2.
	\end{equation}
	By Lemma \ref{lem::descent-lemma} and Step \ref{line::new_xk+1} of the algorithm, we have that, given $h \in \{1,\ldots, m\}$,
	\begin{equation}
		\label{eq::descent_lemma_applied_MO-IHT}
		f_{h}(x_{k + 1}) \le f_{h}(x_k) + \nabla f_{h}(x_k)^\top\left(x_{k + 1} - x_k\right) + \frac{L\left(f_{h}\right)}{2}\left\|x_{k + 1} - x_k\right\|^2.
	\end{equation}
	Given the definition of maximum operator, we can combine \eqref{eq::max_varepsilon_L}-\eqref{eq::descent_lemma_applied_MO-IHT} to obtain 
	$f_{h}(x_{k + 1}) \le f_{h}(x_k) -\hat{\varepsilon} + \frac{L(f_{h}) - L}{2}\|x_{k + 1} - x_k\|^2$
	and, thus,
	\begin{equation}
		\label{eq::final_before_subseq}
		f_{h}(x_k) - f_{h}(x_{k + 1}) \ge \hat{\varepsilon} + \frac{L - L\left(f_{h}\right)}{2}\left\|x_{k + 1} - x_k\right\|^2.
	\end{equation}
	By Point 3. of Lemma \ref{lem::technical-lemma-MO-IHT}, we know that, for all $k \ge 1$, 
	$f_{h}(x_{k - 1}) \ge f_{h}(x_k)$. Given the latter result and \eqref{eq::final_before_subseq}, for $k \in K$ such that $k \ge 1$, we obtain
	\begin{equation}
		\label{eq::after_subseq}
		f_{h}(x_{k - 1}) - f_{h}(x_{k + 1}) \ge \hat{\varepsilon} + \frac{L - L\left(f_{h}\right)}{2}\left\|x_{k + 1} - x_k\right\|^2.
	\end{equation}
	Recursively applying the reasoning used for \eqref{eq::after_subseq}, we can conclude that for all $k_1, k_2 \in K$ such that $k_1 < k_2$, 
	\begin{equation}
		\label{eq::k1k2+1}
		f_{h}(x_{k_1}) - f_{h}(x_{k_2 + 1}) \ge \hat{\varepsilon} + \frac{L - L\left(f_{h}\right)}{2}\left\|x_{k_2 + 1} - x_{k_2}\right\|^2.
	\end{equation}
	Now, taking into account Point (v) of Lemma \ref{lem::technical-lemma-MO-IHT}, \eqref{eq::lim_barx} and the continuity of the norm, we get that $\lim_{{k \rightarrow \infty,k \in K}}x_{k + 1}  = \bar{x}.$
	Thus, taking the limit in \eqref{eq::k1k2+1} for $k \rightarrow \infty$, $k \in K$, recalling again Point (v) of Lemma \ref{lem::technical-lemma-MO-IHT}, \eqref{eq::lim_barx} and the continuity of $f_{h}$, we conclude that 
	$0 = f_{h}(\bar{x}) - f_{h}(\bar{x}) \ge \hat{\varepsilon}.$
	We get a contradiction since $\hat{\varepsilon} > 0$. Thus, the thesis is proved.
\end{proof}

\begin{remark}
	By Proposition \ref{prop:moiht-conv} and the continuity of $\theta_L$ (Lemma \ref{lemma:continuity-theta}) we are guaranteed that, for any $\varepsilon>0$, Algorithm \ref{alg::MO_IHT} will produce a point $x_k$ such that $\theta_L(x_k)>-\varepsilon$ in a finite number of iterations. Thus, we can effectively employ this condition as a practical stopping criterion for the \texttt{MOIHT} procedure.
\end{remark}

\subsection{Sparse Front Steepest Descent}
\label{subsec::SFSD}
In what follows, we describe and analyze the \textit{Sparse Front Steepest Descent} (\texttt{SFSD}) methodology. The algorithm can be seen as a two phases approach, which is based on the following consideration: in problems of the form \eqref{eq::mo-prob}, the Pareto front is usually an irregular set made up of several, distinct smooth parts; each of these nice portions of the front is typically the image of a set of solutions sharing the same structure, i.e., associated with the same support set. The rationale of the proposed algorithm is thus to first define a set of starting solutions; the support sets of these solutions should ideally be diverse and define a subspace where a portion of the Pareto set lies. Then, an adaptation of the front steepest algorithm \cite{Cocchi2020_Onthe,lapucciimproved} can be run starting from this initial set of solutions to span the front exhaustively. 
To the best of our knowledge, \texttt{SFSD} is the first front-oriented approach for cardinality-constrained MOO.

\subsubsection{Phase One: Initialization}
\label{subsubsec::initialization}

The first phase of the \texttt{SFSD} procedure deals with the identification of a set of starting solutions. 
The most direct way of proceeding would arguably be exhaustive enumeration of the super support sets, selecting for each a solution. However, the number of possible supports is high, growing as fast as $\binom{n}{s}$, but only a small fraction contributes to the Pareto front. Thus, this strategy is inefficient, up to being totally impractical with problems of nontrivial size. 

A totally random initialization might also appear to be a possible path to take, but, by similar reasons as above, it would end up being a completely luck-based operation. Therefore, we suggest to exploit single-point solvers to retrieve Pareto-stationary solutions. Indeed, by the mechanisms of this kind of algorithms, not only the obtained points are stationary but are usually also good solutions from a global optimization perspective. We explored the following (not exhaustive) list of options: 
\begin{itemize}
	\item Using the \texttt{MOIHT} discussed in Section \ref{subsec::MOIHT} in a multi-start fashion. Since the algorithm finds $L$-stationary solutions, optimization should be driven avoiding ``bad'' supports. 
	\item Using the Multi-Objective Sparse Penalty Decomposition (\texttt{MOSPD}) method from \cite{lapucci22} in a multi-start fashion; in brief, at each iteration $k$ of \texttt{MOSPD}, a pair $(x_{k + 1}, y_{k + 1})$ is found such that $x_{k + 1}$ is (approximately) Pareto-stationary for the penalty function $Q_{\tau_k}(x, y_{k + 1}) = F(x) + \frac{\tau_k}{2}\boldsymbol{1}_m\|x - y_{k + 1}\|^2,$
	with $\tau_k \rightarrow \infty$ for $k\to\infty$.
	The pair $(x_{k + 1}, y_{k + 1})$ is obtained by means of an \textit{alternate minimization} scheme. For further details, we refer the reader to \cite{lapucci22}. 
	\texttt{MOSPD} is proved to converge to points satisfying the multi-objective Lu-Zhang conditions for problem \eqref{eq::mo-prob}, that are even weaker than Pareto-stationarity; however, Penalty Decomposition methods have been shown to retrieve solutions both in the scalar \cite{kanzow23inexact} and in the multi-objective \cite{lapucci22} case that are good from a global optimization perspective.
	\item Combining the strategies of the two preceding points: for each point of a multi-start random initialization, we can first run the \texttt{MOSPD} procedure to exploit its exploration capabilities; then, we can use \texttt{MOIHT} in cascade, so that bad Lu-Zhang points that are not $L$-stationary can eventually be escaped. We refer to this approach as \texttt{MOHyb}.
	\item Solving the scalarized - single objective - problem for different trade-off parameters.
\end{itemize}

Once the starting set of solutions is obtained by one of the above strategies, a further step has to be carried out. Indeed, we need to associate each solution with a super support set. Now, if a solution has full support, then there is a unique super support and no ambiguity. However, there might be solutions with incomplete support; these solutions might be not Pareto-stationary (for example if obtained with \texttt{MOSPD}), in which case we shall carry out a descent step along the steepest feasible descent direction; if on the other hand we actually have a Pareto-stationary point with incomplete support, we shall associate it with any of the super supports.

Obviously, we can complete this first phase with a filtering operation, where dominated solutions get discarded.
To sum up, the result of the first phase of the algorithm provides a set of starting solutions each one associated with a super support set.

\subsubsection{Phase Two: Front steepest descent}
In Algorithm \ref{alg::Front}, we report the scheme of the proposed algorithmic framework (\texttt{SFSD}).

	\SetInd{1ex}{1ex}
\begin{algorithm}[h]
	\caption{\texttt{Sparse Front Steepest Descent}} \label{alg::Front}
	Input: $F:\mathbb{R}^n \rightarrow \mathbb{R}^m$, $\alpha_0 > 0$, $\delta \in (0, 1)$, $\gamma \in (0, 1)$.\\
	$\mathcal{X}^0$ = \texttt{Initialize}($F$)\\
	$k = 0$\\
	\While{a stopping criterion is not satisfied}{
		$\widehat{\mathcal{X}}^k = \mathcal{X}^k$ \\
		\ForAll{$(x_c, J_{x_c})\in \mathcal{X}^k$}{
			\If{$(x_c, J_{x_c}) \in \widehat{\mathcal{X}}^k$ }{
				\If{$\theta_{J_{x_c}}(x_c)<0$}{
					\vspace{0.05cm}$\alpha_c^k = \max\limits_{h\in\mathbb{N}} \{\alpha_0\delta^h\mid F(x_c+\alpha_0\delta^h d_{J_{x_c}}(x_c))\le F(x_c)+\boldsymbol{1}_m\gamma\alpha_0\delta^h\theta_{J_{x_c}}(x_c)\}$
				}
				\Else{$\alpha_c^k=0$ }
				$z_c^k = x_c+\alpha_c^kd_{J_{x_c}}(x_c)$\\
				$\widehat{\mathcal{X}}^k = \widehat{\mathcal{X}}^k \setminus \left\{(y, J_y) \in \widehat{\mathcal{X}}^k \mid J_y = J_{x_c},\; F(z^k_c) \lneqq F(y)\right\} \cup \left\{(z^k_c, J_{x_c})\right\}$\\
				\ForAll{$I\subseteq\{1,\ldots,m\}$ s.t.\ $\theta^I_{J_{x_c}}(z_c^k) < 0$}{
					\If{$(z_c^k, J_{x_c})\in\widehat{\mathcal{X}}^k$}{
						\vspace{0.05cm}$\alpha_c^I$ = $\max\limits_{h\in\mathbb{N}} \{\alpha_0\delta^h\mid \forall\,(y, J_y)\in\widehat{\mathcal{X}}^k,\; J_y=J_{x_c},\; \exists j \in \{1,\ldots, m\} \text{ s.t. } f_j(z_c^k+\alpha_0\delta^h d^I_{J_{x_c}}(z_c^k)) < f_j(y)\}$\\
						$\hat{z} = z_c^k + \alpha_c^I d^I_{J_{x_c}}(z_c^k)$\\
						$\widehat{\mathcal{X}}^k = \widehat{\mathcal{X}}^k \setminus \left\{(y, J_{y}) \in \widehat{\mathcal{X}}^k \mid J_y=J_{x_c},\; F(\hat{z}) \lneqq F(y)\right\} \cup \left\{(\hat{z}, J_{x_c})\right\}$\\
				}}
				
			}
			
		}
		$\mathcal{X}^{k + 1} = \widehat{\mathcal{X}}^k$ \\
		$k = k + 1$
	}
	\Return $\mathcal{X}^k$
\end{algorithm}

The method starts working with the starting set of solutions resulting from the \texttt{Initialize} step, i.e., phase one of the algorithm; the obtained set $\mathcal{X}^0$ is then given by 
\begin{equation*}
	\mathcal{X}^0 = \left\{\left(x, J_x\right) \mid J_x\in\mathcal{J}(x)\right\},
\end{equation*}
i.e., solutions associated with a corresponding super support set.
Given any pairs $(x,J_x), (y,J_y)\in\mathcal{X}^0$ with $J_x=J_y$, we assume that $x$ and $y$ are mutually nondominated w.r.t.\ $F$. 

Basically, the \texttt{SFSD} algorithm employs the instructions of the front steepest descent algorithm \cite{Cocchi2020_Onthe}, modified as suggested in \cite{lapucciimproved}, treating separately points associated with different super support sets.

Specifically, for any nondominated point $x_c$ in the current Pareto front approximation, a common descent step in the subspace corresponding to the support $J_{x_c}$ is carried out, doing a standard Armijo-type line search \cite{Fliege2000}.
In other words, the search direction is thus given by $d_{J_{x_c}}(x_c)$ according to \eqref{eq::lu_zhang}.
Then, further searches w.r.t. subsets of objectives are carried out from the obtained point, as long as it is not dominated by any other points $y$ in the set with $J_y=J_{x_c}$. These additional explorations are carried out along partial descent directions \cite{COCCHI2021100008,Cocchi2020_Onthe} in the reference subspace of the point at hand. Considering $I \subseteq \{1,\ldots, m\}$ as a subset of objectives indices, we define $\theta_J^I(\bar{x}) = \min_{d \in \mathbb{R}^n} \max_{j\in I}\, \nabla f_j(\bar{x})^\top d + \frac{1}{2}\|d\|^2\; \textit{s.t.}\; d_{\bar{J}} = \boldsymbol{0}_{|\bar{J}|}$.
Similar to \eqref{eq::lu_zhang}, the problem has a unique solution that we denote by $d_J^I(\bar{x})$.

Since the solutions are compared only if associated to the same super support set, the subspaces induced by different super support sets are explored separately in \texttt{SFSD}. As a result, we basically obtain separate Pareto front approximations, each one corresponding to a super support set. At the end of the \texttt{SFSD} execution, all the points can be compared and the dominated ones can finally be filtered out in order to obtain the final Pareto front approximation for problem \eqref{eq::mo-prob}.

Note that, conceptually, the algorithm can be seen as if multiple, independent runs of the front steepest descent method were carried out, each time constraining the optimization process in a particular subspace; however, exploration in \texttt{SFSD} is carried out ``in parallel'' throughout different supports, so that the front approximation is constructed uniformly and we can avoid cases where all the computational budget is spent for the optimization w.r.t.\ the first few considered supports.

\begin{remark}
	Since each point is considered for search steps only in the subspace induced by its associated super support set, it easily follows that every new solution will be feasible for \eqref{eq::mo-prob}. 
\end{remark}

\subsubsection{Algorithm Theoretical Analysis}
In this section, we state the convergence property of the \texttt{SFSD} methodology. We refer the reader to \cite{lapucciimproved} for the proofs of properties inherited by \texttt{SFSD} directly from the front steepest descent method.

Before proving the convergence result, we need to introduce the set $X_J^k = \left\{x \mid \exists \left(x, J\right) \in \mathcal{X}^k\right\},$ with $J$ denoting a super support set, and to recall the definition of \textit{linked sequences}, firstly introduced in \cite{liuzzi16}.

\begin{definition}
	Let $\left\{X_J^k\right\}$ be the sequence of sets of nondominated points, associated with the super support set $J$, produced by Algorithm \ref{alg::Front}. We define a linked sequence as a sequence $\left\{x_{j_k}^J\right\}$ such that, for all $k$, the point $x_{j_k}^J \in X_J^k$ is generated at iteration $k - 1$ of Algorithm \ref{alg::Front} by the point $x_{j_{k - 1}}^J \in X_J^{k - 1}$.
\end{definition}

\begin{proposition}
	Let us assume that $X_J^0$ is a set of mutually nondominated points and there exists $x_0^J \in X_J^0$ such that the set $\widehat{\mathcal{L}}_F\left(x_0^J\right) = \bigcup_{j=1}^m\{x \in \Omega \mid f_j(x) \le f_j(x_0^J)\}$ is compact. Moreover, let $\left\{X_J^k\right\}$ be the sequence of sets of nondominated points, associated with the super support set $J$, produced by Algorithm \ref{alg::Front}, and $\left\{x_{j_k}^J\right\}$ be a linked sequence. Then, the latter admits accumulation points, each one satisfying the MOLZ conditions for problem \eqref{eq::mo-prob}.
\end{proposition}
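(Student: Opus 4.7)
The plan is to reduce the statement to the convergence theorem for the front steepest descent method established in \cite{Cocchi2020_Onthe,lapucciimproved}, leveraging the fact that the super support set $J$ is invariant along the linked sequence. Indeed, every search direction employed by Algorithm \ref{alg::Front} when starting from a pair $(x,J)$ satisfies $d_{\bar J} = \boldsymbol{0}_{|\bar J|}$, so every $x_{j_k}^J$ lies in the fixed $|J|$-dimensional linear subspace $V_J = \{x \in \mathbb{R}^n \mid x_{\bar J} = \boldsymbol{0}_{|\bar J|}\} \subseteq \Omega$. Restricted to $V_J$, problem \eqref{eq::mo-prob} becomes a smooth unconstrained MOO problem, \texttt{SFSD} acts exactly as the plain front steepest descent scheme, and MOLZ stationarity with respect to $J$ coincides with the usual Pareto stationarity of that restricted problem.

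First I would establish existence of accumulation points. By construction, each common descent step produces a point that strictly dominates its parent, while each partial descent step produces a point that is not dominated by any current element of $\widehat{\mathcal{X}}^k$ sharing the same support; the subsequent filtering removes dominated entries. A routine induction then shows that no $x_{j_k}^J$ can be dominated by $x_0^J$, so for every $k$ there exists some index $j(k) \in \{1,\ldots,m\}$ with $f_{j(k)}(x_{j_k}^J) \le f_{j(k)}(x_0^J)$. Hence $\{x_{j_k}^J\} \subseteq \widehat{\mathcal{L}}_F(x_0^J)$, which is compact, and accumulation points exist. For the MOLZ claim I would argue by contradiction: let $\bar x = \lim_{k \in K} x_{j_k}^J$ and suppose $\theta_J(\bar x) < 0$ (non-positivity is automatic, since $d=\boldsymbol{0}_n$ is feasible in \eqref{eq::lu_zhang}). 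Continuity of the map $x \mapsto \theta_J(x)$, which follows from strong convexity of the quadratic subproblem by the same argument used in Lemma \ref{lemma:continuity-theta}, yields $\bar\varepsilon>0$ with $\theta_J(x_{j_k}^J) \le -\bar\varepsilon$ for $k\in K$ large enough. The common descent step is therefore triggered at each such iteration, producing $z_c^k$ with $F(z_c^k) \le F(x_{j_k}^J) + \boldsymbol{1}_m \gamma \alpha_c^k \theta_J(x_{j_k}^J)$. The standard case analysis on $\alpha_c^k$---either bounded away from zero (which contradicts boundedness from below of $F$ on the compact set $\widehat{\mathcal{L}}_F(x_0^J)$), or vanishing (in which case the Armijo rejection inequality, passed to the limit using continuity of $F$ and of the steepest descent direction $d_J(\cdot)$, contradicts $\theta_J(\bar x) < 0$)---yields the desired contradiction.

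The hard part will be propagating this argument across the partial descent steps that may intervene between two consecutive common descent steps along a linked sequence, since partial descents only guarantee improvement on a subset of objectives and are thus not straightforwardly $F$-monotone on the full vector of objectives. As detailed in \cite{Cocchi2020_Onthe,lapucciimproved}, the combination of the acceptance rule for partial searches with the dominated-point filtering is exactly what ensures that partial steps never reverse the multi-objective ordering of the linked sequence, so the cumulative decrease of $F$ obtained along common descent steps is preserved and the contradiction argument goes through essentially unchanged.
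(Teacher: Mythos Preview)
Your proposal is correct and follows essentially the same route as the paper: you observe that, since every search direction satisfies $d_{\bar J}=\boldsymbol{0}_{|\bar J|}$, the restriction of \texttt{SFSD} to a fixed super support set $J$ coincides with the front steepest descent method of \cite{Cocchi2020_Onthe,lapucciimproved} applied to the unconstrained problem $\min_{x:\,x_{\bar J}=0}F(x)$, and then invoke the convergence result therein, which gives $\theta_J(\bar x)=0$ and hence the MOLZ conditions. The paper's proof is exactly this reduction, stated more tersely; your additional detail on the boundedness argument and the Armijo case analysis simply unpacks what \cite[Proposition~3.4]{lapucciimproved} already contains.
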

\begin{proof}
	By the instructions of the algorithm, each linked sequence $\left\{x_{j_k}^J\right\}$ can be seen as a linked sequence generated by applying the front steepest descent algorithm from \cite{lapucciimproved} to the problem of minimizing $F(x)$ subject to $x_{\bar{J}} = 0$. 
	Thus, we can follow the proof of \cite[Proposition 3.4]{lapucciimproved} to show that each accumulation point $\bar{x}$ of the linked sequence $\left\{x_{j_k}^J\right\}$ is such that $\theta_J\left(\bar{x}\right) = 0$, i.e., $\bar{x}$ satisfies the MOLZ conditions for \eqref{eq::mo-prob}.
\end{proof}

\section{Computational Experiments}
\label{sec::prel_ext}
In this section, we report the results of some computational experiments aimed at assessing the numerical potential of the proposed approaches. The code for the experiments, which was written in \texttt{Python3}, was run on a computer with the following characteristics: Ubuntu 22.04, Intel
Xeon Processor E5-2430 v2 6 cores 2.50 GHz, 16 GB RAM. In order to solve instances of problems like \eqref{eq::basic_feasibility_MOO}-\eqref{eq::lu_zhang}-\eqref{eq::MO-IHT-problem}, the Gurobi optimizer (version 9.1) was employed.

\subsection{Experiments Setup}

In our numerical experience, we considered two classes of problems:
cardinality-constrained quadratic problems and sparse logistic regression tasks.

The quadratic MOO problems, which often represent a useful test benchmark in optimization, have the form
\begin{equation*}
	\min_{x\in \mathbb{R}^n}\quad \tfrac{1}{2}\left(x^\top Q_1 x - c_1^\top x, x^\top Q_2 x - c_2^\top x\right)^\top\quad\text{s.t.}\quad\|x\|_0 \le s,
\end{equation*}
where $Q_1, Q_2 \in \mathbb{R}^{n \times n}$ are random positive semi-definite matrices and $c_1, c_2 \in \mathbb{R}^n$ are vectors whose values are randomly sampled in the range $[-1, 1)$. In the experiments, we varied the following problem parameters: the size $n \in \{10, 25, 50\}$; the condition number of the matrices $\kappa \in \{1, 10, 100\}$; the cardinality upper bound $s$. In particular, the latter was set in the following way: for $n = 10$, $s \in \{2, 5, 8\}$; for $n = 25$, $s \in \{5, 10, 20\}$; for $n = 50$, $s \in \{5, 15, 30\}$. Moreover, we used 3 different seeds for the pseudo-random number generator, thus leading to a total of 81 quadratic problems. For each instance, $Q_1$ and $Q_2$ are characterized by the same condition number, i.e., $L(f_1)=L(f_2)=\kappa$.

As for the sparse logistic regression problem \cite{bertsimas17,Civitelli2021}, it is a relevant task in machine and statistical learning. Given a dataset of $N$ samples with $n$ features $R = (r_1,\ldots, r_N)^\top \in \mathbb{R}^{N \times n}$ and $N$ corresponding labels $\{t_1,\ldots, t_N\}$ belonging to $\{-1, 1\}$, the \textit{regularized sparse logistic regression problem} is given by:
\begin{equation*}
	\min_{w\in\mathbb{R}^n}\;\tfrac{1}{N}\sum_{i=1}^{N}\log\left(1 + \exp\left(-t_i\left(w^\top r_i\right)\right)\right) + \tfrac{\lambda}{2} \left\|w\right\|^2\quad \text{ s.t. } \quad \|w\|_0\le s,
\end{equation*}
where $\lambda \ge 0$. The logistic loss aims to fit the training data, while the regularization term helps to avoid overfitting. The two functions are clearly in contrast with each other. For our experiments, we employed the multi-objective reformulation considered in \cite{lapucci22}:
\begin{equation*}
	\min_{w\in\mathbb{R}^n}\quad \bigg(\tfrac{1}{N}\sum_{i=1}^{N}\log(1 + \exp(-t_i(w^\top r_i))),\; \tfrac{1}{2}\|w\|^2\bigg)^\top \quad \text{ s.t. } \quad \|w\|_0\le s.
\end{equation*}
For this problem, $L(F) = (\|R^\top R\|\mathbin{/}N, 1)^\top$.
The dataset suite we considered is composed of 7 binary classification datasets from the UCI Machine Learning Repository \cite{Dua2019} (Table \ref{tab::datasets}). We tested the algorithms on instances of the problem with $s \in \{2, 5, 8, 12, 20\}$. For each dataset, the samples with missing values were removed. Moreover, the categorical variables were one-hot encoded, while the other ones were standardized to zero mean and unit standard deviation.

In order to evaluate the performance of the algorithms one compared to the others, we employed the performance profiles \cite{dolan2002benchmarking}. In brief, performance profiles show the probability that a metric value achieved by a method in a problem is within a factor $\tau \in \mathbb{R}$ of the best value obtained by any of the solvers considered in the comparison. We refer the reader to \cite{dolan2002benchmarking} for more details. As performance metrics, we used some classical ones of the multi-objective literature: \textit{purity}, $\Gamma$\textit{--spread}, $\Delta$\textit{--spread} \cite{custodio11} and \textit{hyper-volume} \cite{zitzler98}. Note that, since \textit{purity} and \textit{hyper-volume} have increasing values for better solutions, the performance profiles w.r.t. them were produced based on the inverse of the obtained values.

Note that, for both classes of problems, in the following we will also consider solution approaches based on scalarization, i.e., tackling the problem $\min_{x \in \Omega}\;f_1(x) + \lambda f_2(x),$
where $\lambda \ge 0$. In the quadratic case, the problem can be solved by means of commercial solvers such as Gurobi, exploiting an \texttt{MIQP} reformulation. In the logistic regression case, we instead use the \textit{Greedy Sparse-Simplex} (\texttt{GSS}) algorithm \cite{beck13}.
Note that, opposed to \texttt{MIQP} approach in quadratic problems, \texttt{GSS} is not guaranteed to produce a Pareto optimal solution.

As anticipated in Section \ref{subsubsec::initialization}, our \texttt{SFSD} methodology was tested taking as starting solutions the ones generated by the single-point methods mentioned above, i.e., \texttt{MOIHT}, \texttt{MOSPD}, \texttt{MOHyb}, \texttt{MIQP} and \texttt{GSS}. Similarly to what is done in \cite{lapucciimproved}, we employed a strategy to limit the number of points used for partial descent searches, in order to improve the \texttt{SFSD} efficiency and prevent the production of too many, very close solutions. In detail, we added a condition based on the crowding distance \cite{deb02} to determine whether a point should be considered for further exploration after the common descent step or not.

Every execution had a time limit of 4 minutes. In particular, each single-point method was tested in a multi-start fashion: it had to process as many input points as possible within 2 minutes; in the remaining time, the \texttt{MOSD} procedure was employed as a refiner, starting at each returned point and keeping fixed its zero variables so that the cardinality constraint was kept valid. In \texttt{SFSD}, we set a time limit of 2 minutes for both phases of the algorithm.
For \texttt{MOIHT}, \texttt{MOSPD} and \texttt{MOHyb}, we considered $2n$ initial solutions randomly sampled from a box ($[-2, 2]^n$ for the quadratic problems; $[0, 1]^n$ for logistic regression). In order to be feasible, each initial point is first projected onto $\Omega$. These algorithms were executed 5 times with different seeds for the pseudo-random number generator to reduce the sensibility from the random initialization. The five generated fronts were then compared based on the \textit{purity} metric and only the best and worst ones were chosen for the comparisons. The scalarization-based approaches were run once considering $2n$ values for $\lambda$, i.e., $\lambda \in \{2^{i + \frac{1}{2}} \mid i \in \mathbb{Z},\; i \in [-n, n)\}$, and starting at the initial solution $\boldsymbol{0}_n \in \Omega$.

\begin{table}[htb]
	\footnotesize
	\centering
	\begin{tabular}{|c||c|c|}%
		\hline%
		\textbf{DATASET}&$N$&$n$\\%
		\hline%
		\hline%
		Heart (Statlog)&270&25\\%
		\hline%
		Breast Cancer Wisconsin (Prognostic)&194&33\\%
		\hline%
		QSAR Biodegradation&1055&41\\%
		\hline
		SPECTF heart&267&44\\%
		\hline
		Spambase&4601&57\\%
		\hline
		Optical recognition of handwritten digits&3823&62\\%
		\hline
		Madelon&2000&500\\%
		\hline%
	\end{tabular}
	\vspace{0.2cm}
	\caption{Datasets used for the experiments on sparse logistic regression. The number of features takes into account one-hot encoding of categorical ones.}
	\label{tab::datasets}
\end{table}

\subsection{Quadratic Problems}

In this section, we report the results on the cardinality-constrained quadratic problems.
As for the algorithms parameters, based on some preliminary experiments not reported here for the sake of brevity, we set: $\varepsilon = 10^{-7}$, $L = 1.1\kappa$ for \texttt{MOIHT}; $\tau_{k + 1} = 1.5\tau_k$, $\varepsilon_0 = 10^{-2}$, $\varepsilon_{k + 1} = 0.9\varepsilon_k$ and $\| x_{k + 1} - y_{k + 1}\| \le 10^{-3}$ as stopping condition for \texttt{MOSPD}; the Pareto stationarity approximation degree $\varepsilon = 10^{-7}$ for \texttt{MOSD}; $\alpha_0 = 1$, $\delta = 0.5$ and $\gamma = 10^{-4}$ for all Armijo-type line searches.
Possible values for the \texttt{MOSPD} parameter $\tau_0$ are discussed in the next section. The parameters choices for \texttt{MOIHT} and \texttt{MOSPD} were also used in \texttt{MOHyb}.

\subsubsection{Preliminary Assessment of \texttt{MOIHT}, \texttt{MOSPD} and \texttt{MOHyb}}
\label{subsubsec:prel_eval}

We start analyzing the effectiveness of \texttt{MOIHT}, \texttt{MOSPD} and \texttt{MOHyb}, comparing them in Figure \ref{fig::QP_2} on a selection of quadratic problems. 
In order to show the differences among the algorithms as clearly as possible, only for this experiment, we considered a single run where the methods took as input the same 25 randomly extracted initial points. Moreover, we set no time limit, so that all the algorithms could process each initial solution until the respective stopping criteria were met. 

The \texttt{MOSPD} and \texttt{MOHyb} performance was investigated for values for $\tau_0 \in \{1, 100\}$ (results for $\tau_0 = 100$ are shown in the left column of the figure, $\tau_0 = 1$ on the right). The black dots indicate the reference front: the latter is obtained combining the fronts retrieved by running \texttt{SFSD} with all the proposed initialization strategies and discarding the dominated solutions.

\begin{figure*}
	\centering
	\subfloat[]{\includegraphics[width=0.375\textwidth]{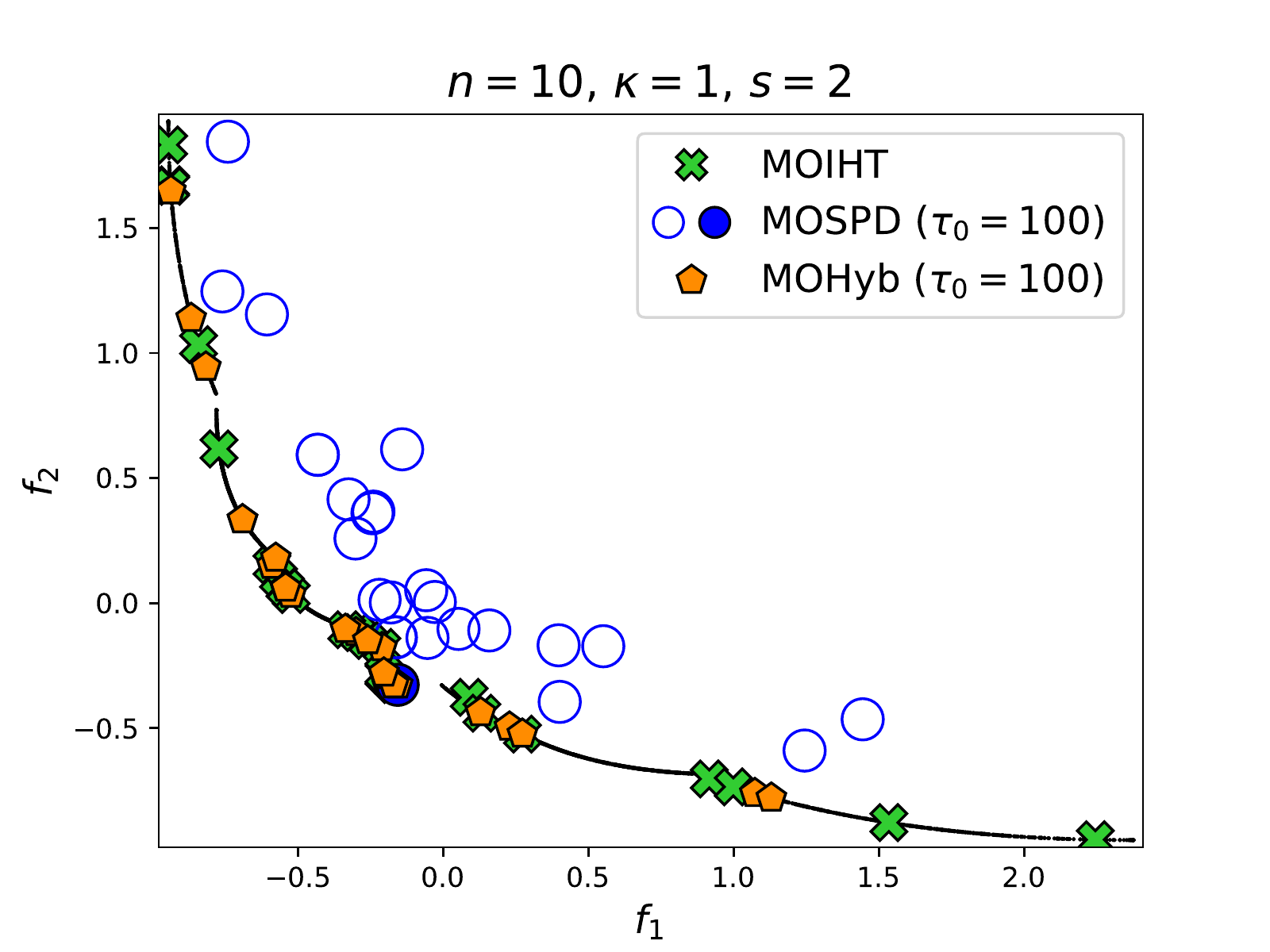}}
	\hfil
	\subfloat[]{\includegraphics[width=0.375\textwidth]{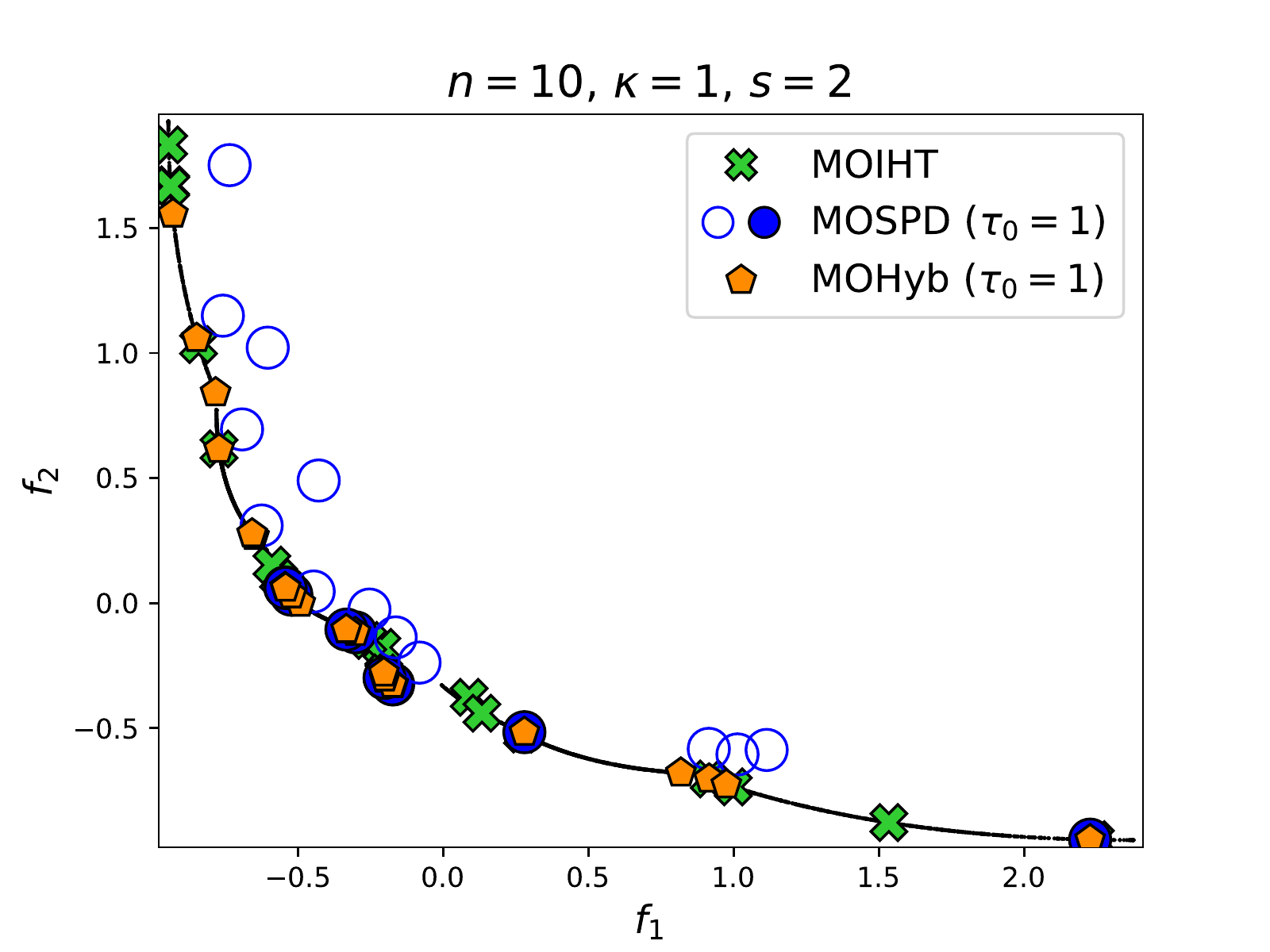}}
	\\
	\subfloat[]{\includegraphics[width=0.375\textwidth]{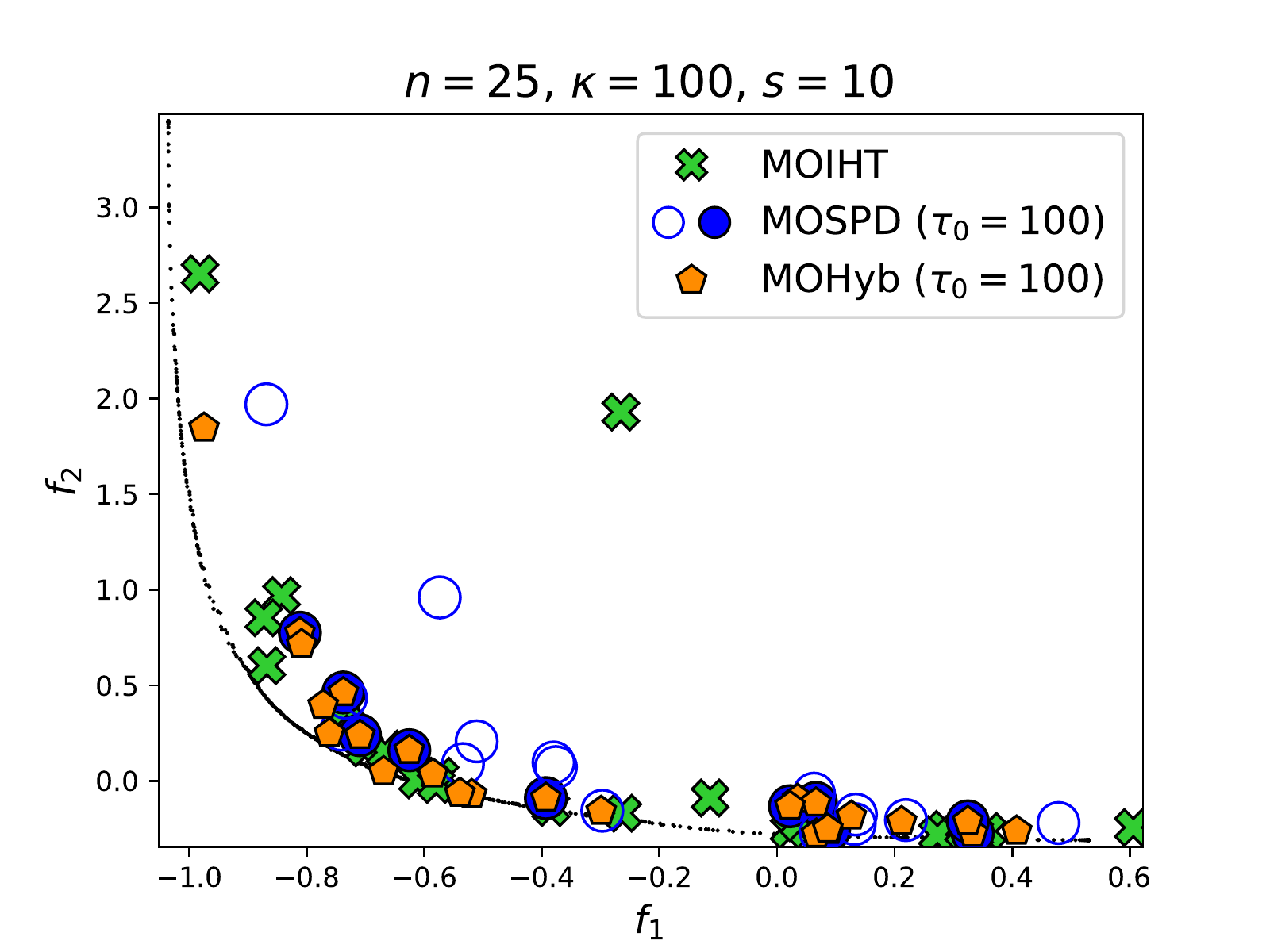}}
	\hfil
	\subfloat[]{\includegraphics[width=0.375\textwidth]{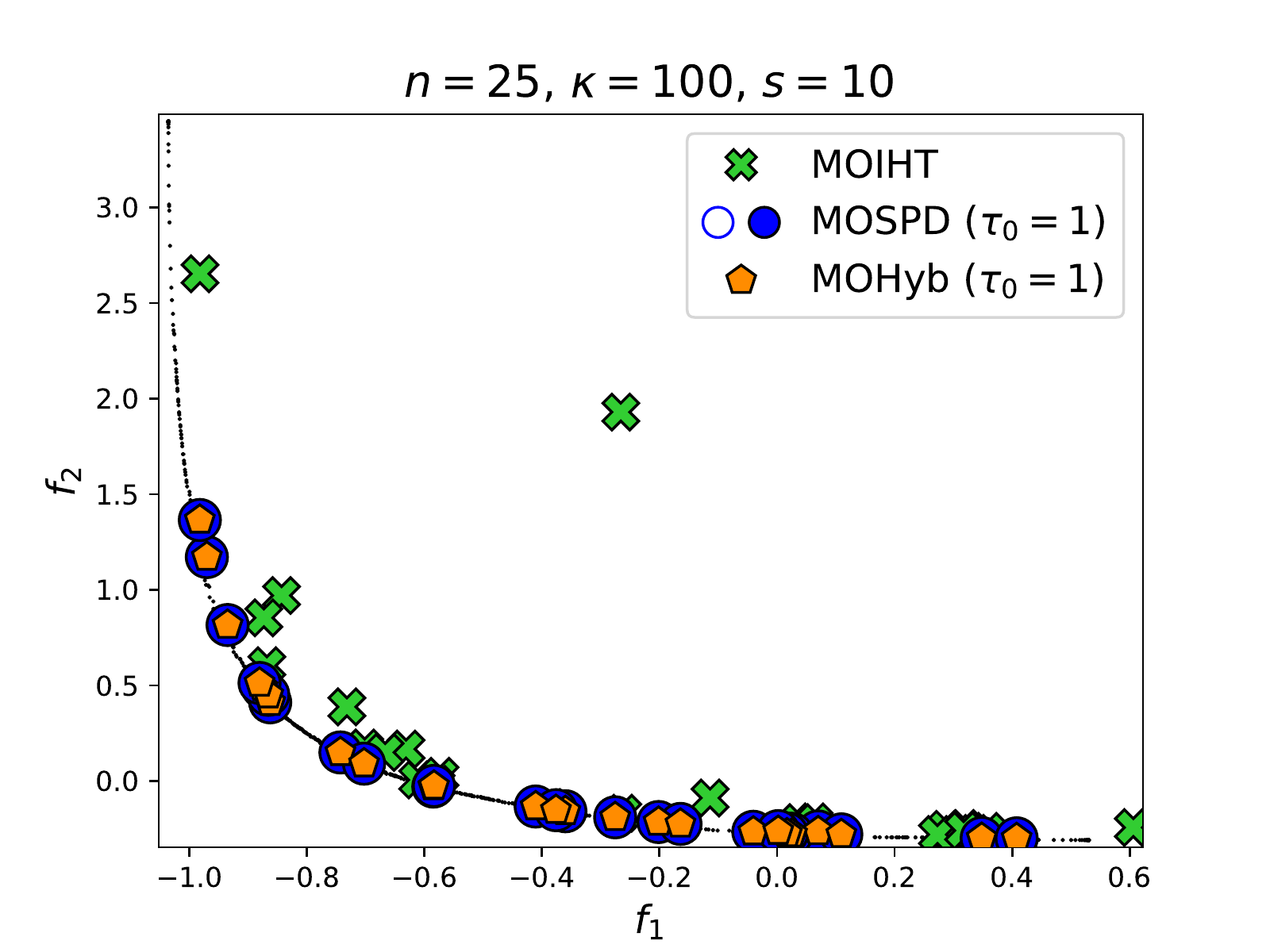}}
	\\
	\subfloat[]{\includegraphics[width=0.375\textwidth]{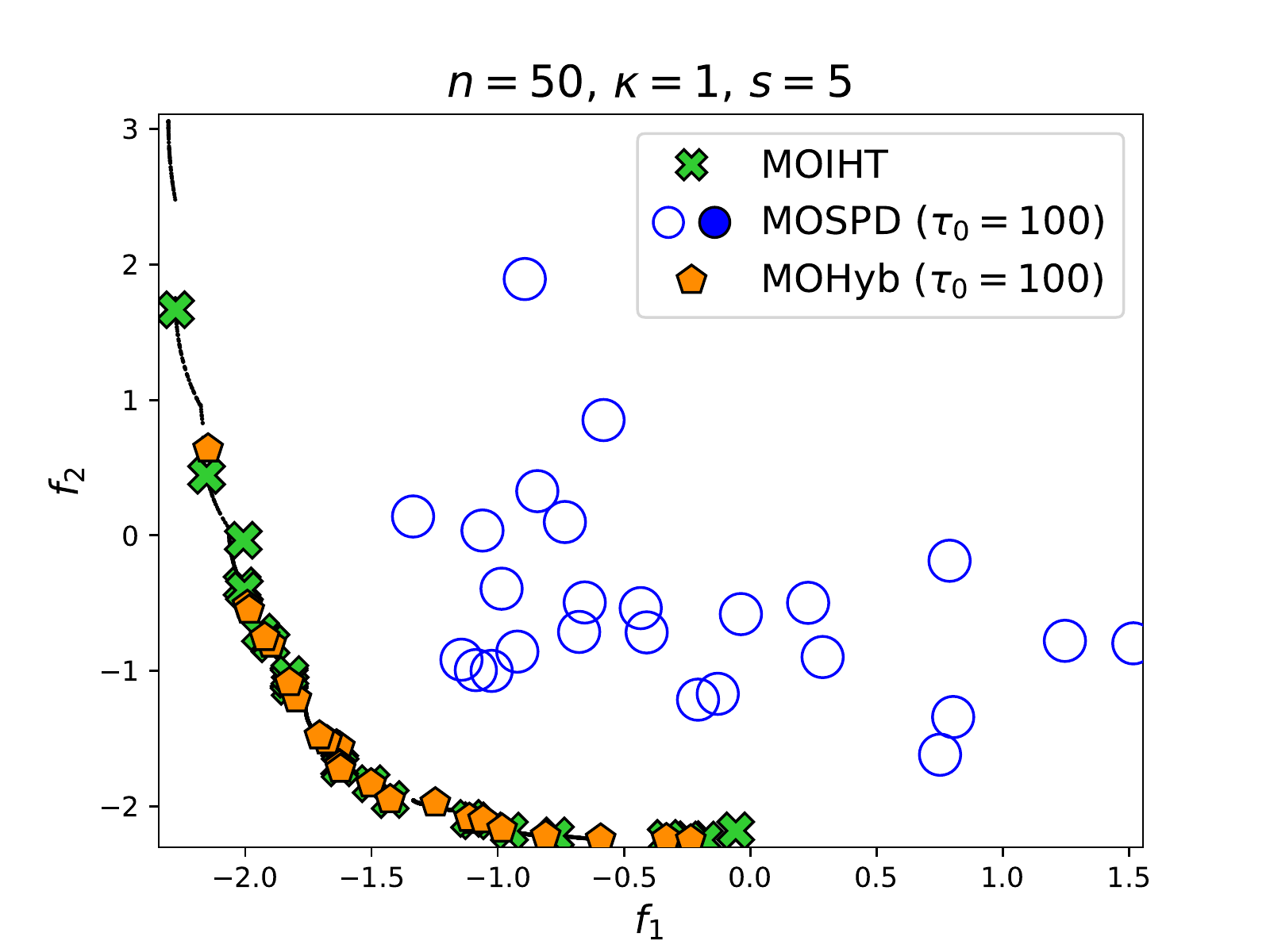}}
	\hfil
	\subfloat[]{\includegraphics[width=0.375\textwidth]{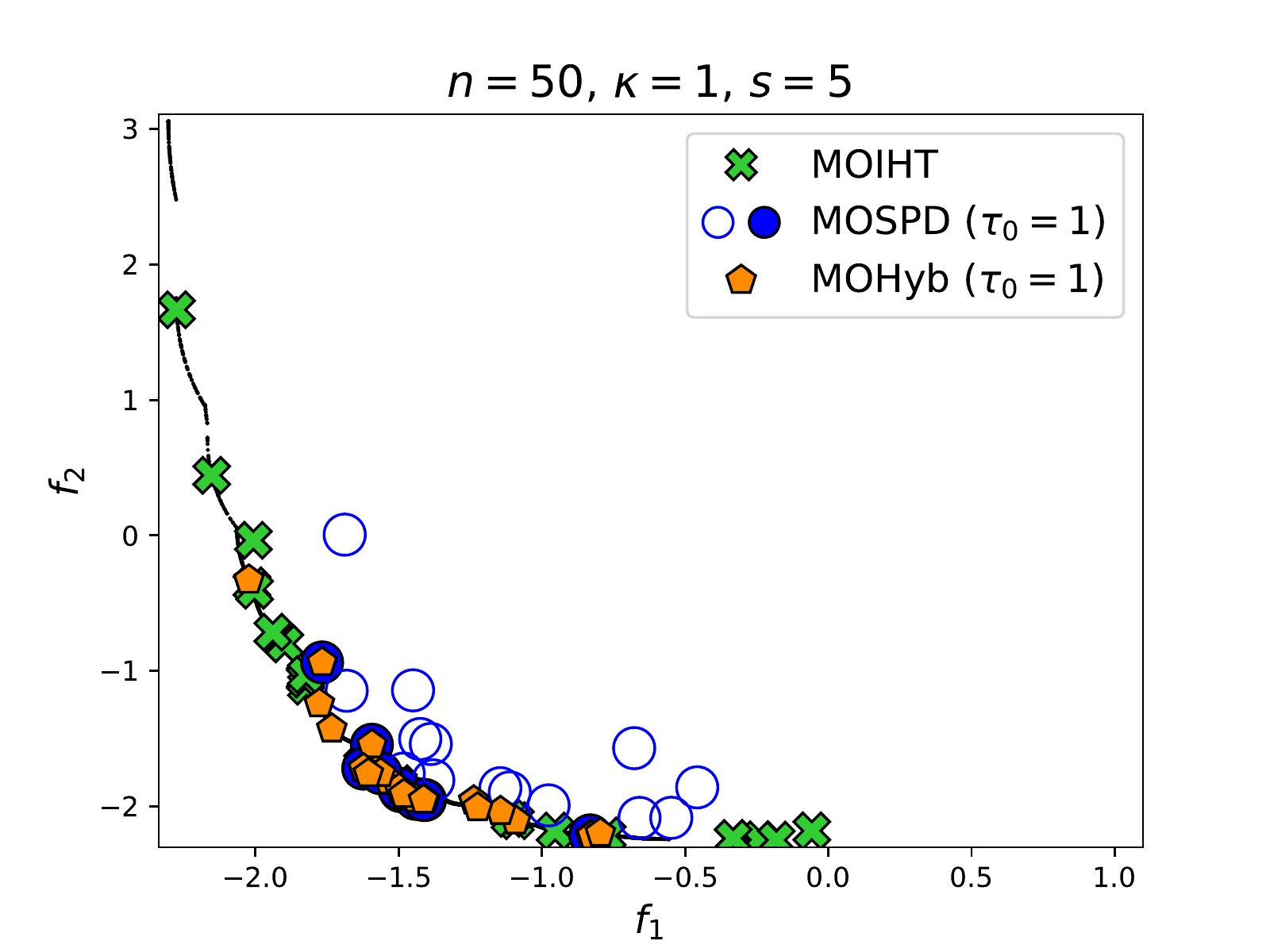}}
	\\
	\subfloat[]{\includegraphics[width=0.375\textwidth]{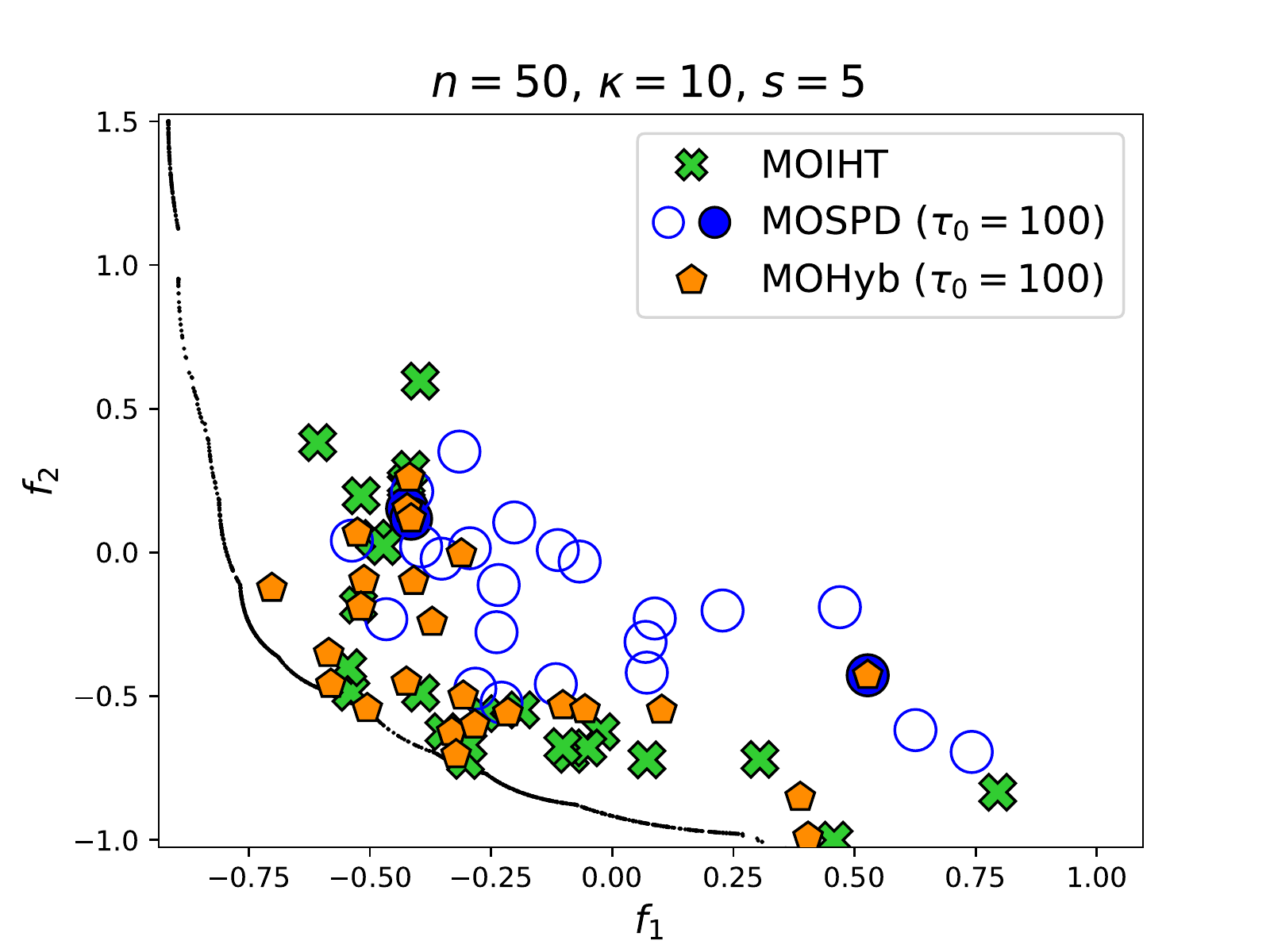}}
	\hfil
	\subfloat[]{\includegraphics[width=0.375\textwidth]{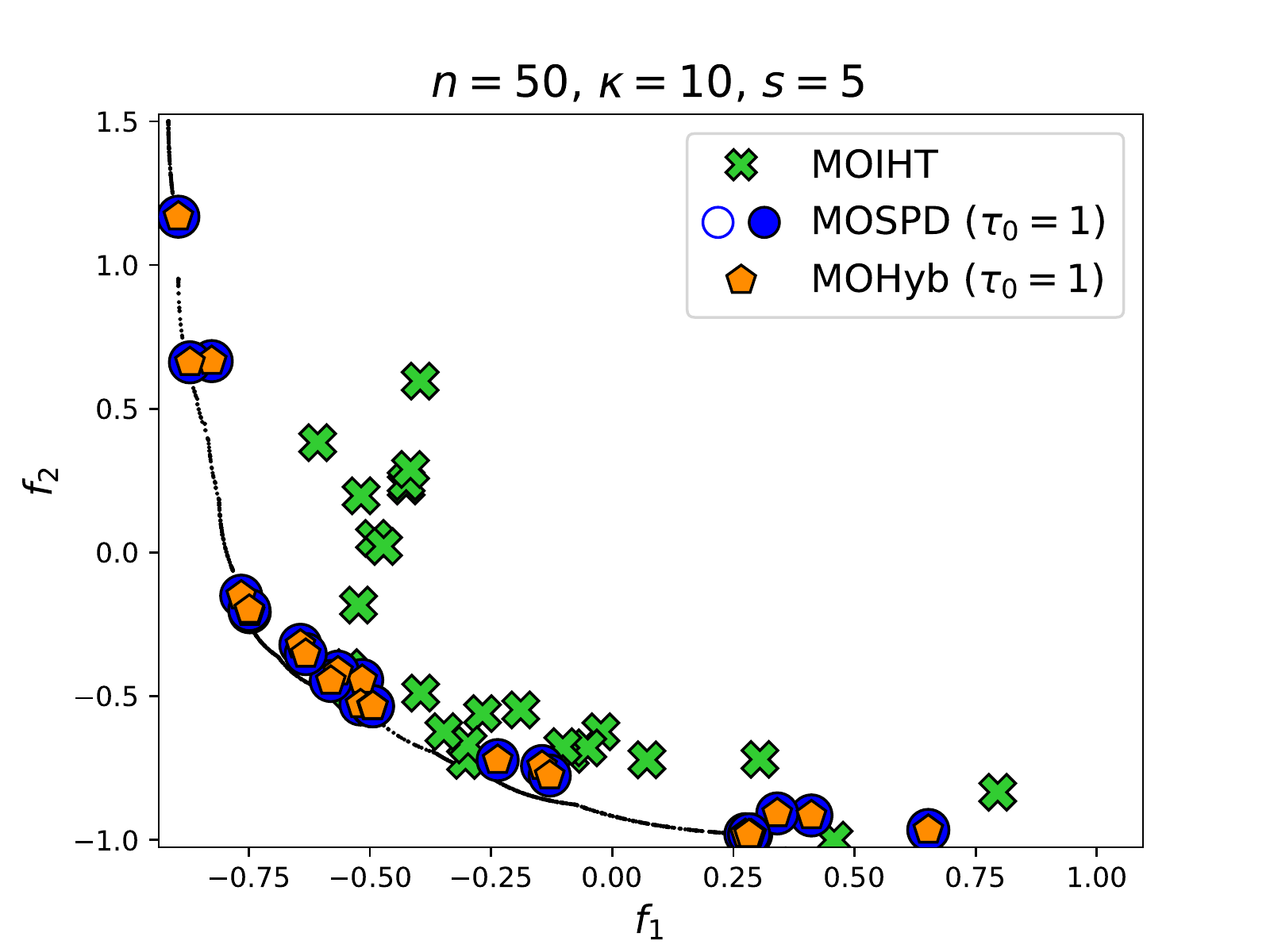}}
	\caption{Results achieved by \texttt{MOIHT}, \texttt{MOSPD} and \texttt{MOHyb} with $\tau_0 \in \{1, 100\}$, starting at 25 random initial solutions, on a selection of quadratic problems. The filled markers denote $L$-stationary solutions ($L = 1.1\kappa$). The small black dots form the reference front.}
	\label{fig::QP_2}
\end{figure*}

In well-conditioned problems ($\kappa = 1$), the \texttt{MOIHT} algorithm performs well, reaching solutions that belong to the reference front. As for \texttt{MOSPD}, the results with $\tau_0 = 100$ are worse, with \texttt{MOSPD} obtaining solutions far from the reference front. The situation is further stressed as the problem dimension $n$ grows. This sounds reasonable, since setting $\tau_0 = 100$ in Penalty Decomposition schemes binds the variables close to the initial feasible solution and, as a consequence, prevents from exploiting the exploration capabilities of \texttt{MOSPD}. A better choice for $\tau_0$ ($\tau_0 = 1$) improves the performance of the algorithm, although \texttt{MOIHT} still performs better. This result is somewhat in line with the theory: \texttt{MOIHT} generates $L$-stationary points, whereas \texttt{MOSPD} converges to solutions only guaranteed to satisfy the (weaker) MOLZ conditions. 
In this scenario, \texttt{MOHyb} inherits the effectiveness of \texttt{MOIHT}: regardless the value for $\tau_0$, it succeeded in getting solutions of the reference front.

In ill-conditioned problems ($\kappa > 1$), the \texttt{MOIHT} performance gets worse: the method struggled in reaching the reference front. This might be explained by the larger values of $L$ that have to be used with these problems ($L = 1.1\kappa$). As the value of $L$ grows, the $L$-stationarity condition does not provide enough information on the quality of the solution support set. As a consequence, \texttt{MOIHT} can end up in many $L$-stationary points with ``bad'' support, i.e., far from the actual Pareto front of the problem. \texttt{MOSPD} with $\tau_0 = 1$ obtained better solutions in these cases. Employing lower values for $\tau_0$, the approach is initially allowed to search for a good point minimizing $F$: this feature can be crucial to avoid a large portion of ``bad'' $L$-stationary points and to reach solutions in the reference front. 
\texttt{MOHyb} ($\tau_0 = 1$) proved to be effective in these scenarios too. The hybrid approach, in these ill-conditioned cases, profited from the exploration capabilities of \texttt{MOSPD}, reaching the same solutions. However, like in the well-conditioned case, \texttt{MOHyb} also proved to be less sensitive than \texttt{MOSPD} w.r.t.\ the value of $\tau_0$, taking advantage of the \texttt{MOIHT} mechanisms to reach at least $L$-stationary solutions when $\tau_0=100$.

\subsubsection{Evaluation of the \texttt{SFSD} Methodology}

We start the analysis of the \texttt{SFSD} algorithm performance on the quadratic problems through Figure \ref{fig::Front}, where we show how the front descent phase allows to improve the results of basic multi-start approaches corresponding to phase one, i.e., \texttt{MOIHT}, \texttt{MOSPD}, \texttt{MOHyb} and \texttt{MIQP}. According to the results of Section \ref{subsubsec:prel_eval}, for \texttt{MOSPD} and \texttt{MOHyb}, we set $\tau_0 = 1$.

\begin{figure*}
	\centering
	\subfloat[\texttt{MOIHT}]{\includegraphics[width=0.375\textwidth]{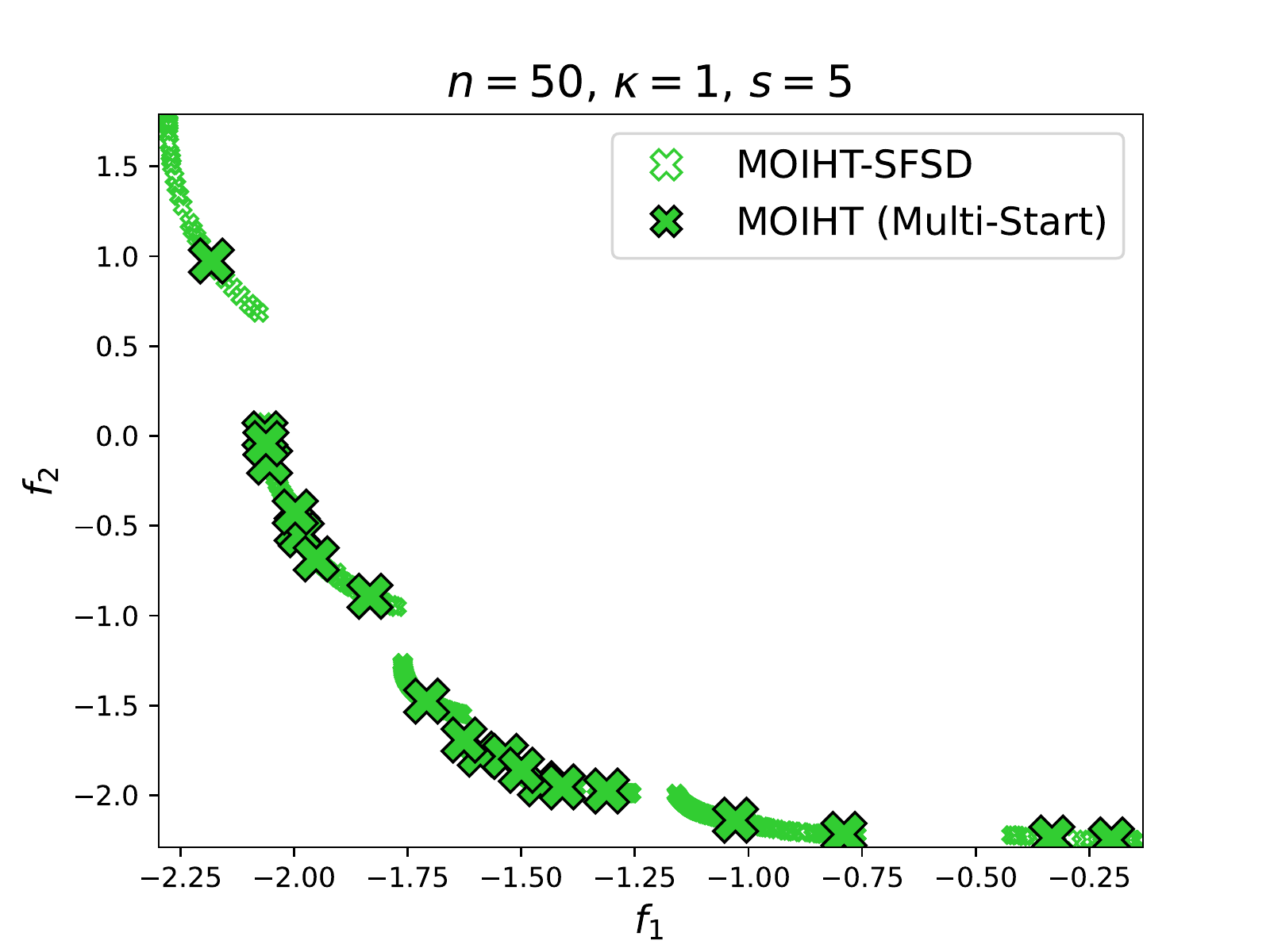}}
	\hfil
	\subfloat[\texttt{MOHyb}]{\includegraphics[width=0.375\textwidth]{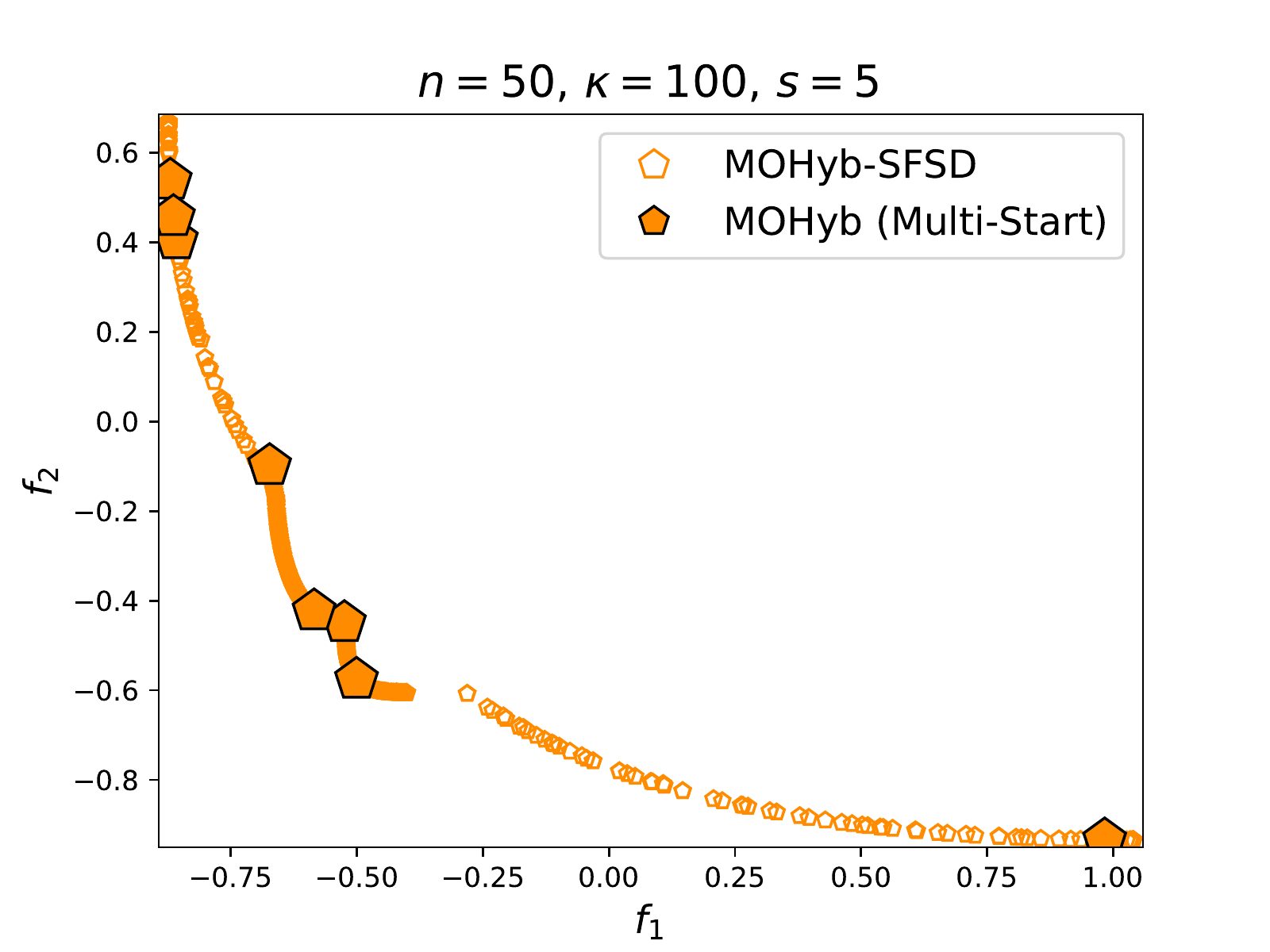}}
	\\
	\subfloat[\texttt{MOSPD}]{\includegraphics[width=0.375\textwidth]{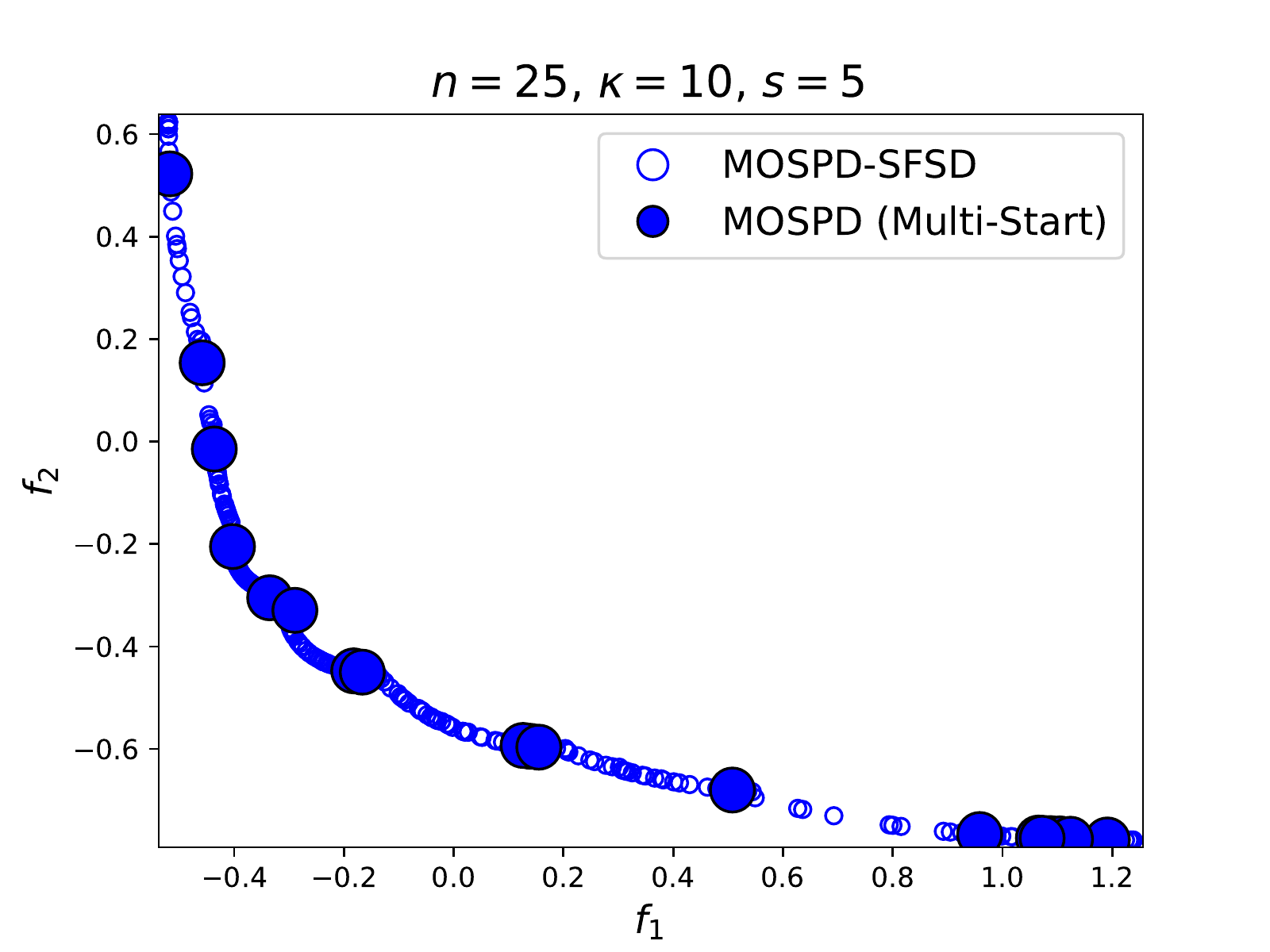}}
	\hfil
	\subfloat[\texttt{MIQP}]{\includegraphics[width=0.375\textwidth]{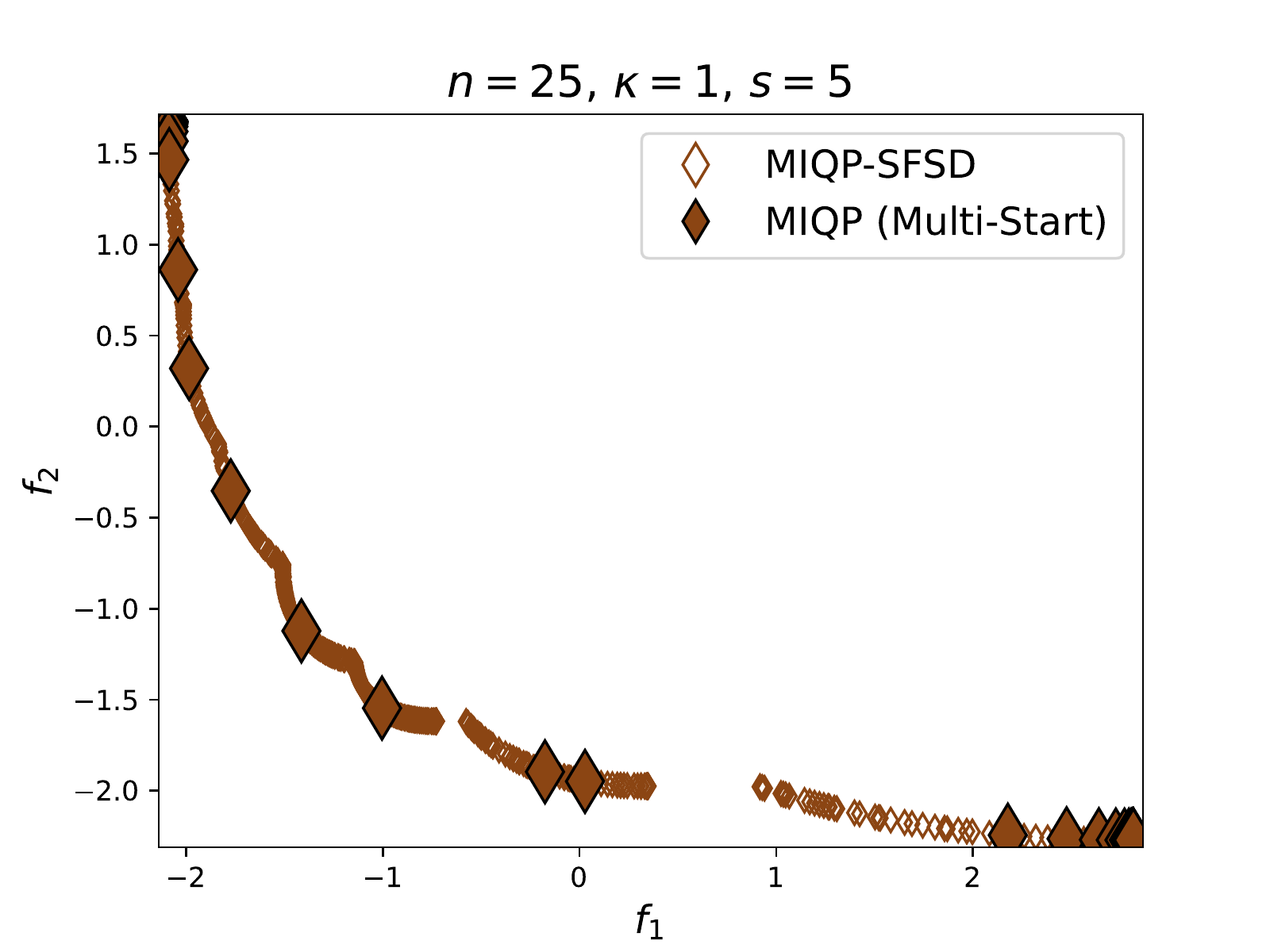}}
	\caption{Results of \texttt{SFSD} phase two compared to simple \texttt{MOSD} refinement of solutions retrieved in phase one. We show one example instance for each considered multi-start/phase one strategy.}
	\label{fig::Front}
\end{figure*}

As anticipated in Section \ref{subsec::SFSD}, in cardinality-constrained MOO the Pareto fronts are typically irregular and made up of several smooth parts. The plots in the figure perfectly reflect these characteristics: each front portion can indeed be associated with a specific support set. Starting from the solutions generated by the single-point methods, the \texttt{SFSD} methodology proves to be effective in exhaustively spanning each portion associated with a support set. This feature allowed our novel front-oriented approach to identify regions of the Pareto front that would have otherwise be hardly covered with the multi-start strategy.

As mentioned in Section \ref{subsec::SFSD}, to the best of our knowledge, \texttt{SFSD} is the first front-oriented approach for cardinality-constrained MOO. In the absence of other specialized algorithms, it is difficult to quantitatively assess the potential of our algorithm and we need to resort to the visual inspection of the solutions. 
In the rest of the section, we then focus our attention on the different options we outlined for the phase one of the \texttt{SFSD} algorithm, comparing its performance as the initialization strategy varies. The comparisons were made by means of the performance profiles (Figure \ref{fig::QP_PP}) on the entire benchmark of quadratic problems.
 
\begin{figure*}
	\centering
	\subfloat[\label{fig::QP_PP_Purity}]{\includegraphics[width=0.25\textwidth]{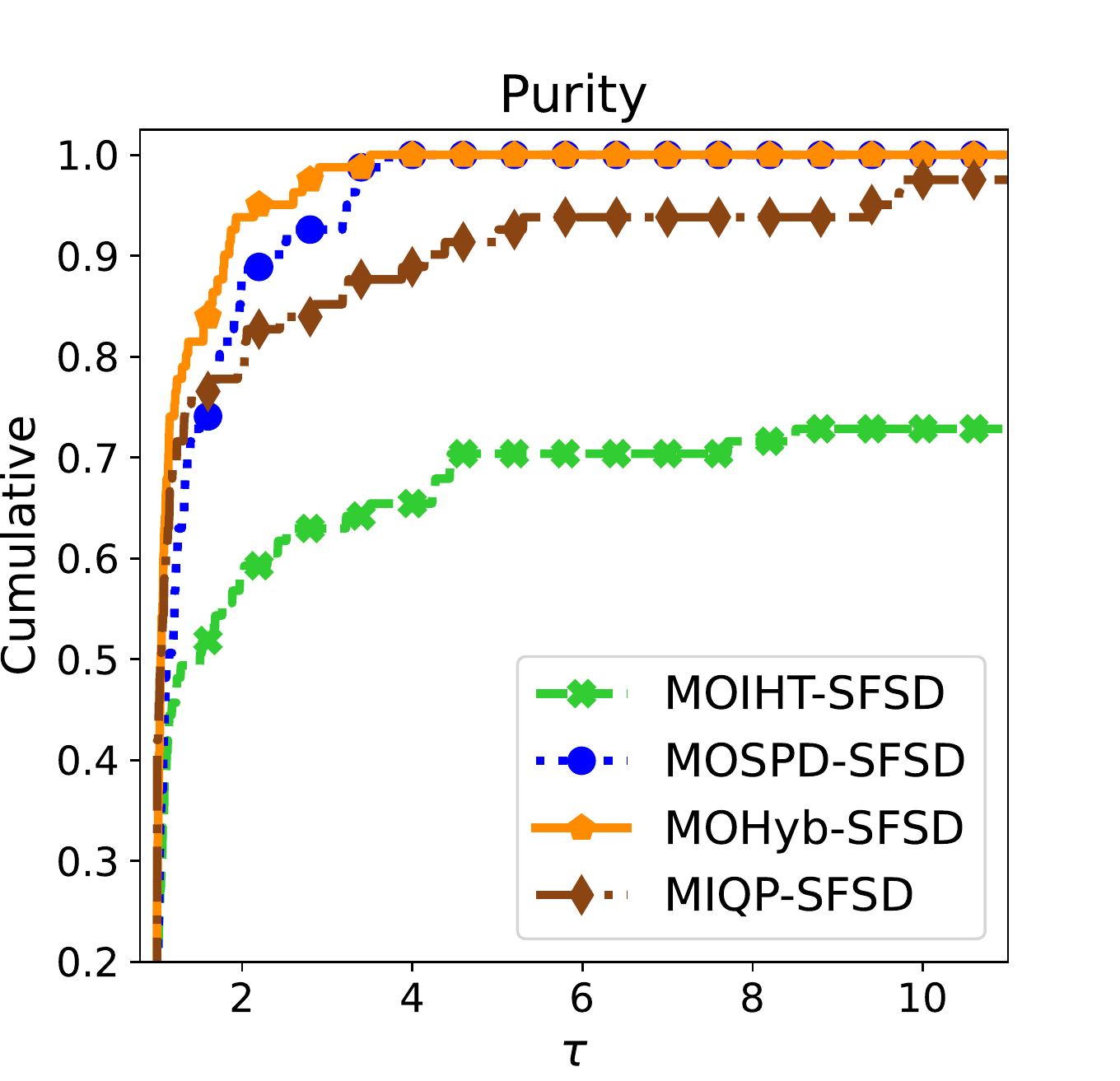}}
	\hfil
	\subfloat[]{\includegraphics[width=0.25\textwidth]{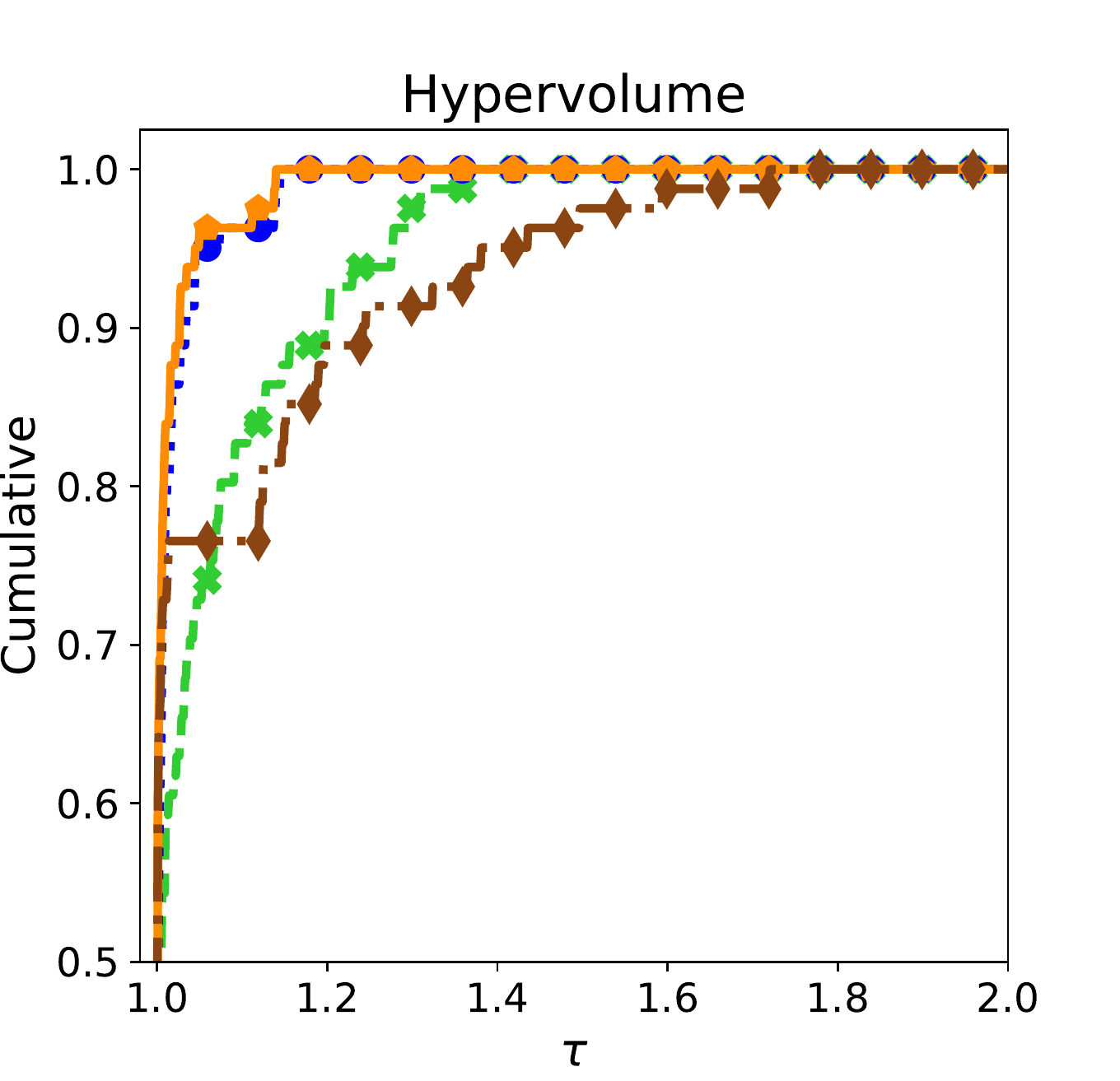}}
	\hfil
	\subfloat[]{\includegraphics[width=0.25\textwidth]{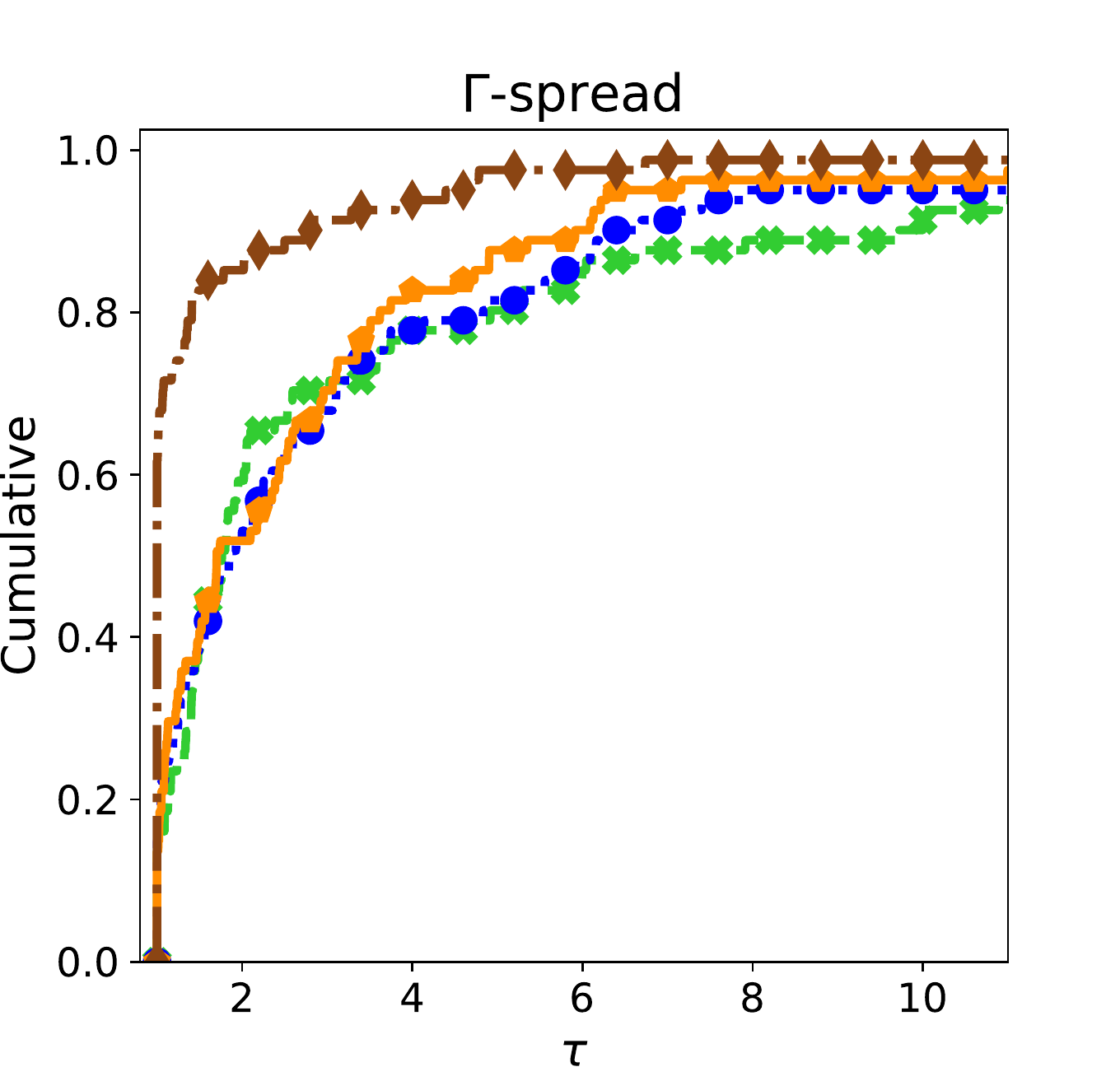}}
	\hfil
	\subfloat[]{\includegraphics[width=0.25\textwidth]{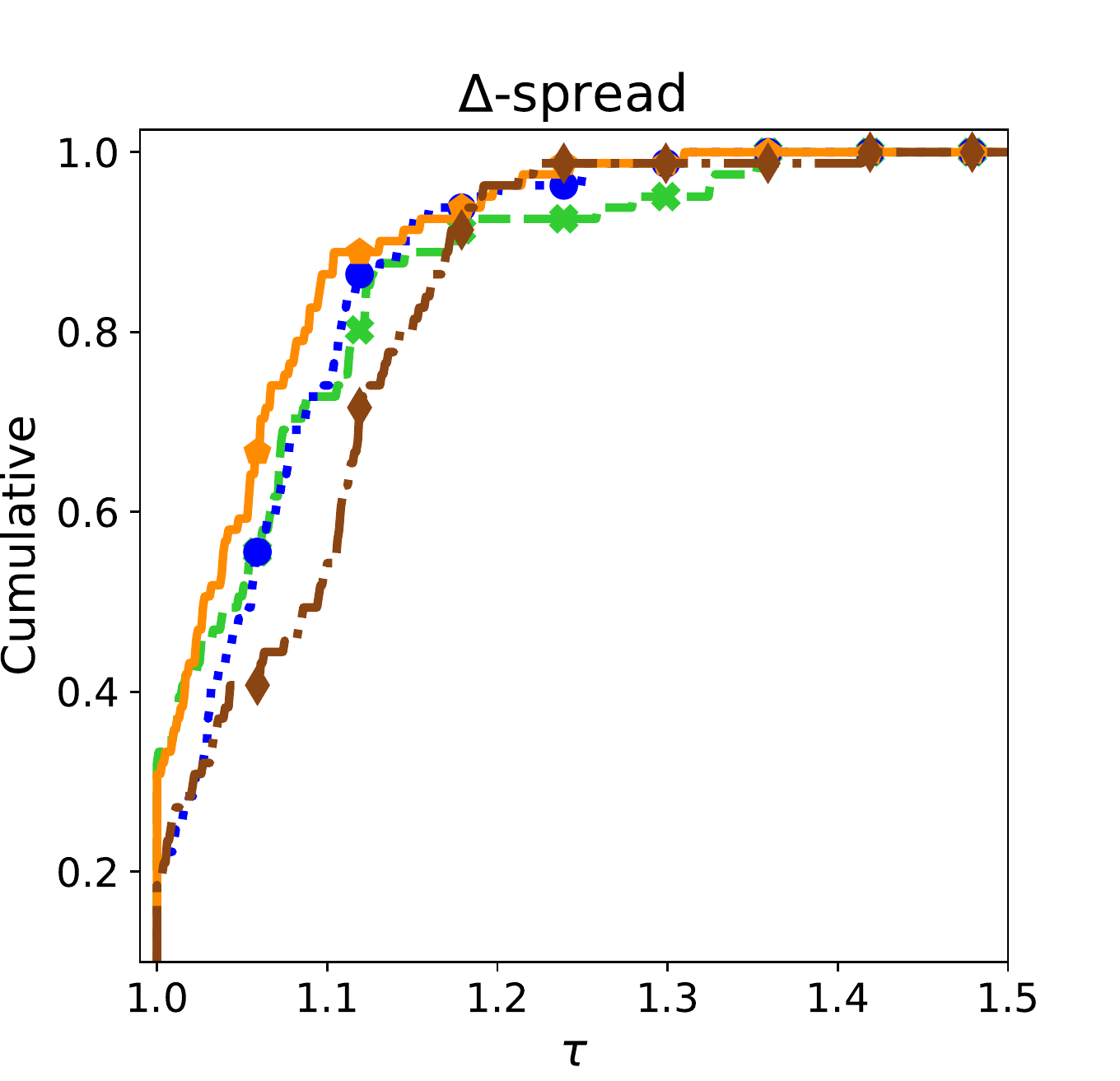}}
	\caption{Performance profiles for \texttt{SFSD} with different initialization strategies, i.e., \texttt{MOIHT}, \texttt{MOSPD}, \texttt{MOHyb} (best executions w.r.t.\ \textit{purity}) and \texttt{MIQP} on the quadratic problems.}
	\label{fig::QP_PP}
\end{figure*}

Looking at the \textit{purity} and the \textit{hyper-volume} metrics, \texttt{SFSD} resulted to be more robust with \texttt{MOHyb} as initialization strategy instead of \texttt{MOIHT} and \texttt{MOSPD}. These results reflect the behavior of the three single-point algorithms already shown in Figure \ref{fig::QP_2}: while \texttt{MOIHT} resulted to be more effective on the well-conditioned problems, \texttt{MOSPD}, with a right choice for the value for $\tau_0$, performed better on the (larger) set of ill-conditioned problems; \texttt{MOHyb}, inheriting the mechanisms of both, managed to obtain good results on problems of both types. As for $\Gamma$\textit{--spread}, \texttt{MIQP} proves to be more capable than the other single-point methods in generating solutions in the extreme regions of the objectives space, and this fact allowed \texttt{SFSD} to get wider front reconstructions. The performance of our front-oriented approach with \texttt{MOHyb}, \texttt{MOIHT} and \texttt{MOSPD} employed in the phase one was quite similar in this scenario. Regarding the $\Delta$\textit{--spread} metric, i.e., uniformity of the Pareto front approximation, all the initialization strategies led to comparable results.

\subsection{Logistic Regression}

In this last section, we analyze the performance profiles (Figure \ref{fig::LOG_PP}) on the logistic regression problems for \texttt{SFSD} with the different possible choices for the first phase of the algorithm. The values for the parameters of the algorithms were again chosen based on preliminary experiments which are not reported for the sake of brevity. In particular, we set: $L = 1.1\max\{L(f_1), L(f_2)\}$ for \texttt{MOIHT}; $\varepsilon = 10^{-7}$ for both \texttt{MOIHT} and \texttt{MOSD}; $\tau_0 = 1$, $\tau_{k + 1} = 1.3\tau_k$, $\varepsilon_0 = 10^{-5}$, $\varepsilon_{k + 1} = 0.9\varepsilon_k$ and $\|x_{k + 1} - y_{k + 1}\| \le 10^{-3}$ as stopping condition for \texttt{MOSPD}; $\alpha_0 = 1$, $\delta = 0.5$ and $\gamma = 10^{-4}$ for all Armijo-type line searches.
Again, the parameters for \texttt{MOIHT} and \texttt{MOSPD} were also employed in \texttt{MOHyb}.
Finally, since the objective functions have different scales, similarly to what is done in \cite{lapucci22}, when computing the \textit{spread} metrics we considered the logarithm (base 10) of the $f_2$ values and, then, re-scaled both $f_1$ and $f_2$ to have values in $[0, 1]$.

With respect to the \textit{purity} and the \textit{hyper-volume} metrics, \texttt{SFSD} resulted to be more robust with \texttt{MOIHT}, \texttt{MOSPD} and \texttt{MOHyb} as initialization strategies, with \texttt{MOHyb} appearing to be slightly superior. As for the $\Gamma$\textit{--spread} metric, \texttt{GSS} was the best algorithm for the \texttt{SFSD} phase one. However, although \texttt{SFSD}, equipped with this setting, was effective in reaching remote regions of the objectives space, it struggled to obtain uniform Pareto front approximations and, thus, to obtain good $\Delta$\textit{--spread} values. As for this last metric, using as initialization strategy \texttt{MOIHT} proved to be a better choice. 

\begin{figure*}
	\centering
	\subfloat[\label{fig::LOG_PP_Purity}]{\includegraphics[width=0.25\textwidth]{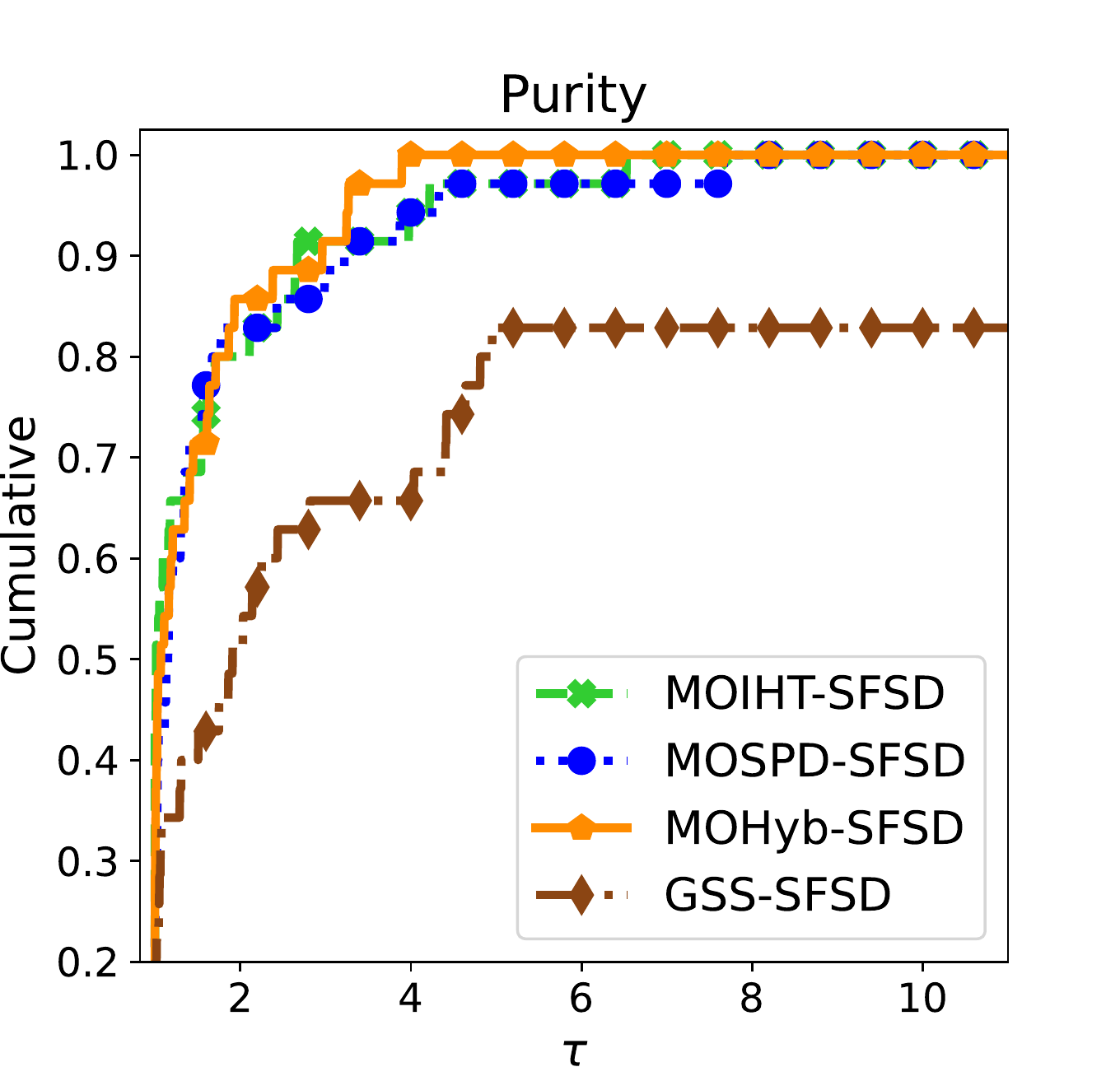}}
	\hfil
	\subfloat[]{\includegraphics[width=0.25\textwidth]{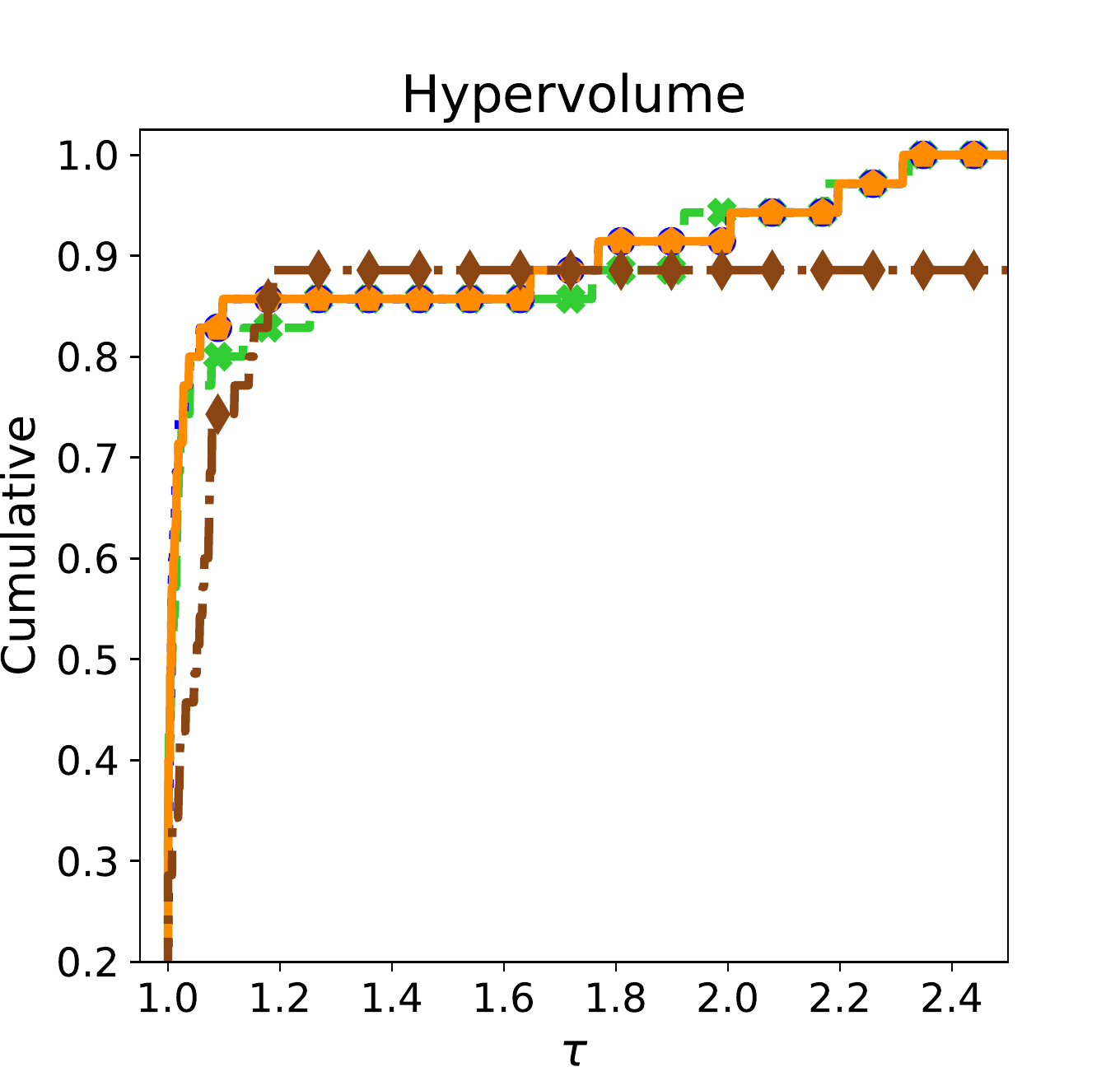}}
	\hfil
	\subfloat[]{\includegraphics[width=0.25\textwidth]{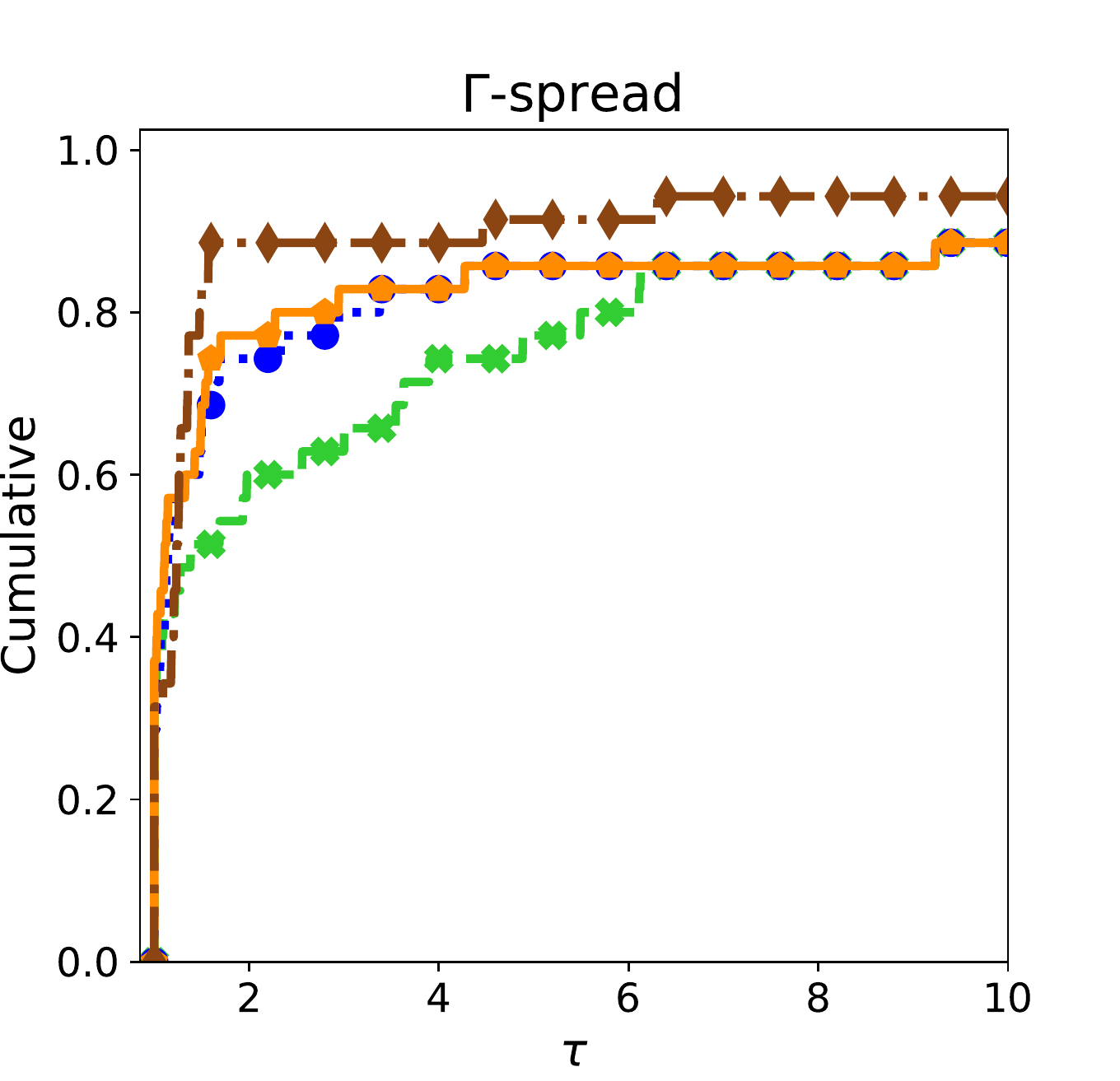}}
	\hfil
	\subfloat[]{\includegraphics[width=0.25\textwidth]{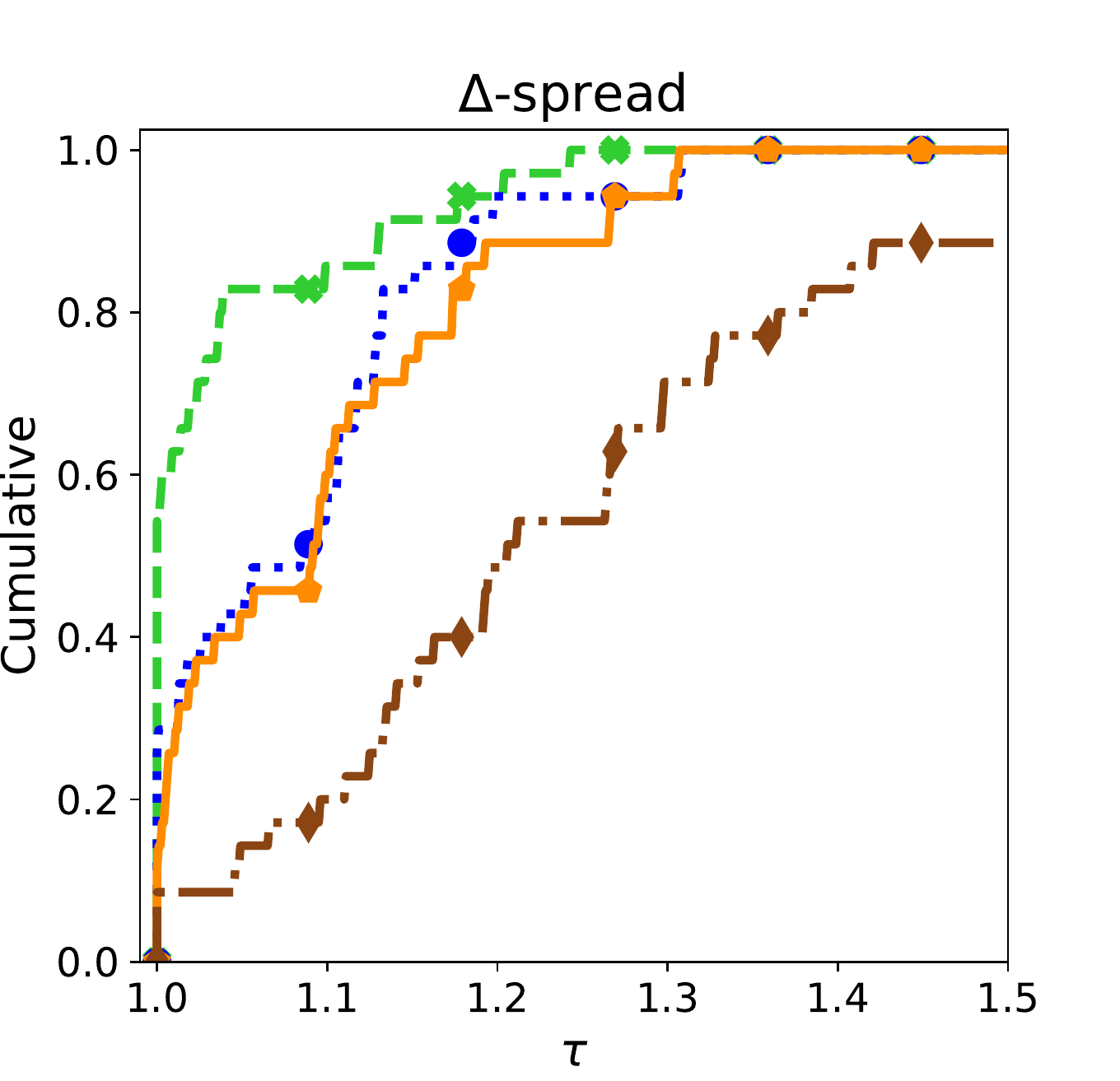}}
	\caption{Performance profiles for \texttt{SFSD} with different initialization strategies, i.e., \texttt{MOIHT}, \texttt{MOSPD}, \texttt{MOHyb} (best executions w.r.t.\ \textit{purity}) and \texttt{GSS} on 35 logistic regression problems.}
	\label{fig::LOG_PP}
\end{figure*}

\begin{remark}
	In the previous sections, we considered the best executions of \texttt{SFSD} equipped with \texttt{MOIHT}, \texttt{MOSPD} and \texttt{MOHyb}, and we compared them with the deterministic outputs of our front-oriented algorithm when \texttt{MIQP}/\texttt{GSS} was employed in the phase one. Thus, for the sake of completeness, in Figure \ref{fig::Worst_PP} we report the performance profiles w.r.t. the \textit{purity} metric obtained considering the worst runs. Comparing these performance profiles with the ones in Figures \ref{fig::QP_PP_Purity}-\ref{fig::LOG_PP_Purity}, we observe only slight decreases in the performance of the non-deterministic strategies.
\end{remark}

\begin{figure*}
	\centering
	\subfloat[]{\includegraphics[width=0.25\textwidth]{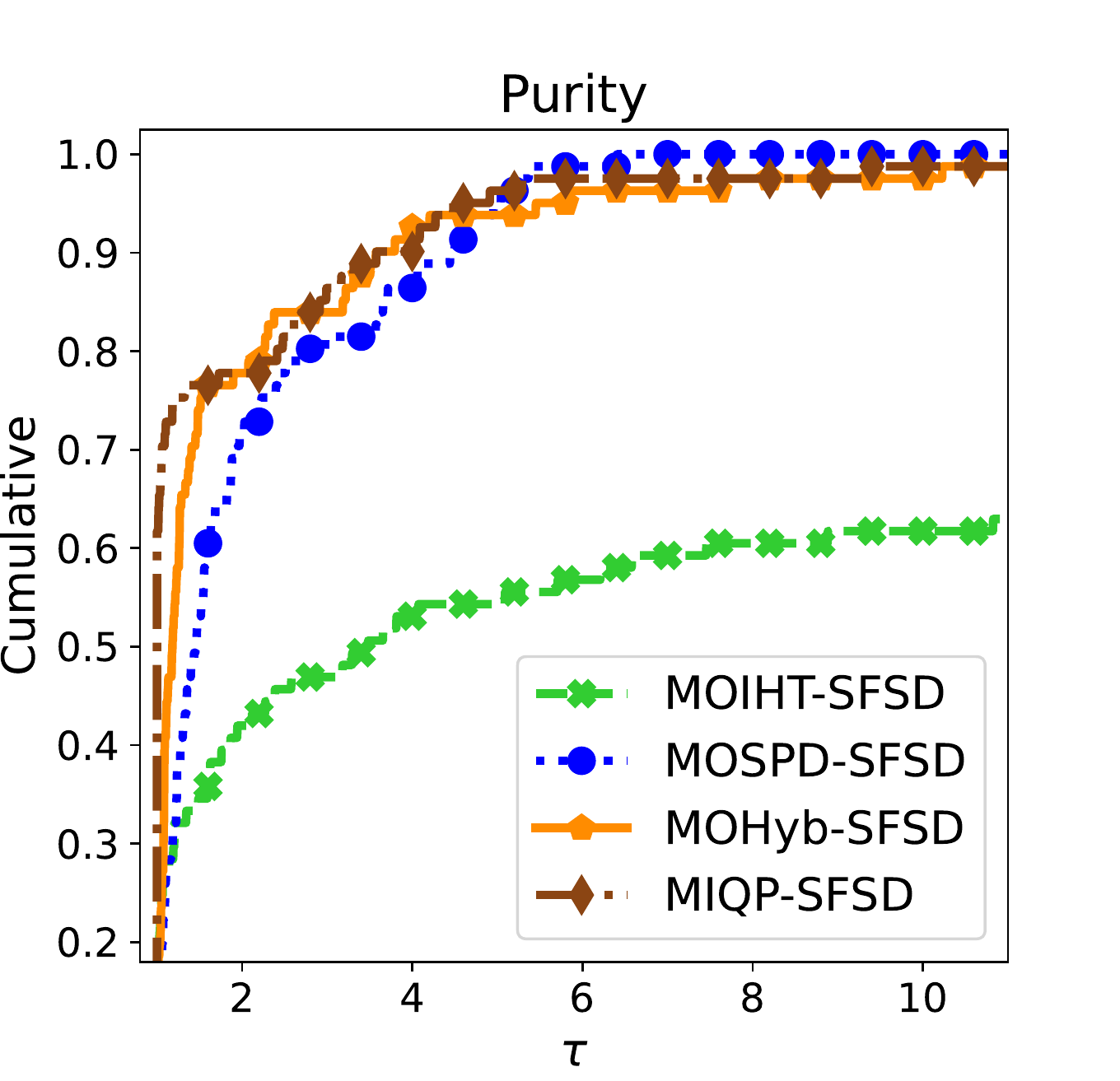}}
	\hfil
	\subfloat[]{\includegraphics[width=0.25\textwidth]{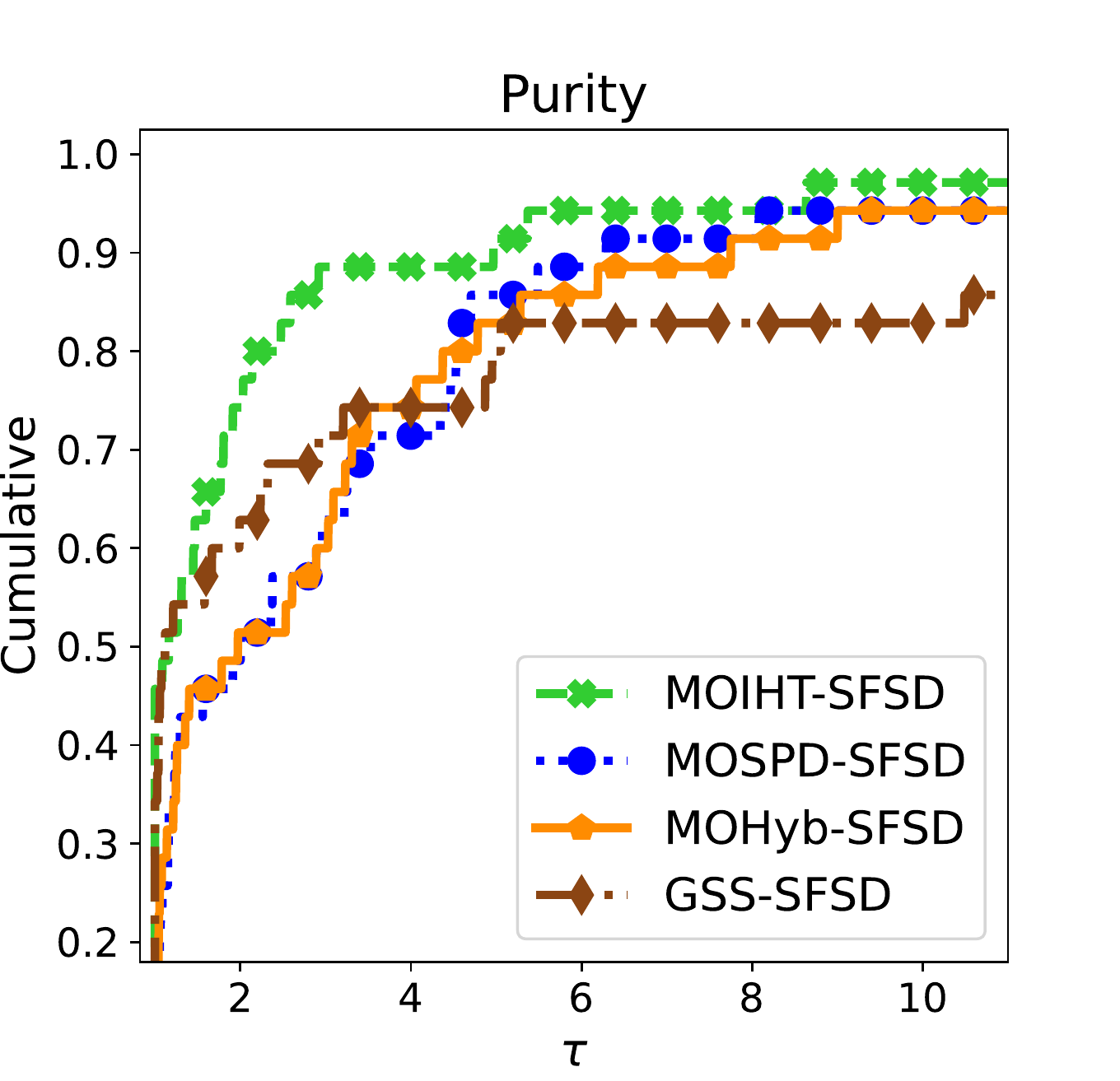}}
	\caption{Performance profiles for \texttt{SFSD} with different initialization strategies, i.e., \texttt{MOIHT}, \texttt{MOSPD}, \texttt{MOHyb} (worst executions w.r.t.\ \textit{purity}) and \texttt{MIQP}/\texttt{GSS}. (a) Quadratic problems; (b) Logistic regression problems.}
	\label{fig::Worst_PP}
\end{figure*}
\section{Conclusions}
\label{sec::conclusions}

In this paper, we considered cardinality-constrained multi-objective optimization problems. Inspired by the homonymous condition for sparse SOO \cite{beck13}, we defined the $L$-stationarity concept for MOO and we analyzed its relationships with the main Pareto optimality concepts and conditions. 

Then, we proposed two novel algorithms for the considered class of problems. The first one is an extension of the \textit{Iterative Hard Thresholding} method \cite{beck13} to the MOO case, called \texttt{MOIHT}: like the original approach, it aims to generate an $L$-stationary point. The second algorithm called \textit{Sparse Front Steepest Descent} (\texttt{SFSD}) is, to the best of our knowledge, the first front-oriented approach for cardinality-constrained MOO. Being an adaptation of the front steepest algorithm \cite{Cocchi2020_Onthe,lapucciimproved}, \texttt{SFSD} aims to approximate the (typically irregular and fragmented) Pareto front of the problem at hand. The method depends on suitable initialization strategies, including, e.g., multi-starting the \texttt{MOIHT} or the \texttt{MOSPD} \cite{lapucci22} algorithms, an hybridization of the two, or a scalarization approach.
From a theoretical point of view, we proved for \texttt{MOIHT} that the sequence of points converges to $L$-stationary solutions; for \texttt{SFSD}, on the other hand, we stated global convergence to points satisfying the MO Lu-Zhang optimality conditions. 

By a numerical experimentation, we also evaluated the performance of the proposed methodologies on benchmarks of quadratic and logistic regression problems. The \texttt{SFSD} methodology is thus shown to be successful at spanning the Pareto front in an exhaustive way, with the multi-start hybrid \texttt{MOSPD}-\texttt{MOIHT} procedure being the most promising solution to be used in the first phase of the algorithm. 

As for future works, we might extend the theoretical results to handle constraints other than the cardinality one. Moreover,
further researches might be focused on the performance evaluation of the algorithms on a more extensive set of problems with more general and possibly non-convex objective functions.

\renewcommand{\theequation}{A\arabic{equation}}
\setcounter{equation}{0}

\renewcommand{\thetable}{C\Roman{table}}
\setcounter{table}{0}
\renewcommand{\theHtable}{Supplement\thetable}

\renewcommand{\theproposition}{A\arabic{proposition}}
\setcounter{proposition}{0}
\renewcommand{\theassumption}{A\arabic{assumption}}
\setcounter{assumption}{0}

\renewcommand{\thealgocf}{A\arabic{algocf}}
\setcounter{algocf}{0}

{\appendices
	\section{Proof of Lemma \ref{lem::sufficientB}}
\label{app::proof_basic_results}
\begin{proof}
	Since $\bar{x}$ is a Pareto-stationarity point for problem \eqref{eq::mo-prob}, by Definition \ref{def::pareto_stat}, we have that $\theta(\bar{x}) = 0$, i.e.,
	\begin{equation}
		\label{eq::theta_B_unfold}
		\max_{j \in \left\{1,\ldots, m\right\}}\nabla f_j\left(\bar{x}\right)^\top d + \frac{1}{2} \left\| d \right\|^2 \ge 0, \qquad \forall d \in \mathcal{D}\left(\bar{x}\right).
	\end{equation}
	Let us suppose, by contradiction, that there exists a direction $\hat{d} \in \mathcal{D}(\bar{x})$ such that
	\begin{equation}
		\label{eq::negative_directional_derivative}
		\max_{j \in \left\{1,\ldots, m\right\}}\nabla f_j\left(\bar{x}\right)^\top\hat{d} < 0.
	\end{equation}
	Since \eqref{eq::theta_B_unfold} holds, we then deduce that $\frac{1}{2} \| \hat{d} \|^2 \ge | \max_{j \in \{1,\ldots, m\}}\nabla f_j(\bar{x})^\top\hat{d} \;| > 0.$
	Now, let us introduce the function $\tilde{\theta}: \mathbb{R}^n \times \mathbb{R}^n \times [0, 1] \rightarrow \mathbb{R}$ as $\tilde{\theta}(x, d, t) = \max_{j \in \{1,\ldots, m\}}\nabla f_j(x)^\top(td) + \frac{1}{2} \| td \|^2 = t\max_{j \in \{1,\ldots, m\}}\nabla f_j(x)^\top d + \frac{t^2}{2} \| d \|^2.$
	By \eqref{eq::basic_feasibility_MOO} and the feasibility of $\hat{d}$, it follows that, for all $t \in [0, 1]$, $t\hat{d} \in \mathcal{D}(\bar{x})$ and $\theta(\bar{x}) \le \tilde{\theta}(\bar{x}, \hat{d}, t)$.
	It is easy to see that $\tilde{\theta}(\bar{x}, \hat{d}, t) < 0$ if $0 < t < -({2}/{\|\hat{d}\|^2})\max_{j \in \{1,\ldots, m\}}\nabla f_j(\bar{x})^\top\hat{d}$,
	where the right-hand side is a positive quantity by Equation \eqref{eq::negative_directional_derivative}.
	But, in this case, we would have that $\theta(\bar{x}) \le \tilde{\theta}(\bar{x}, \hat{d}, t) < 0$, which contradicts the Pareto-stationarity of $\bar{x}$.
	Thus, we prove that, if $\bar{x}$ is Pareto stationary for problem \eqref{eq::mo-prob}, then $\max_{j \in \{1,\ldots, m\}}\nabla f_j(\bar{x})^\top d \ge 0, \; \forall d \in \mathcal{D}(\bar{x}).$
	From this point forward, we can follow the proof of Proposition 3.5 in \cite{lapucci22}.
\end{proof}

}

\section*{Conflict of Interest}
The authors declare that they have no conflict of interest.

\section*{Funding Sources}
This research did not receive any specific grant from funding agencies in the public, commercial, or not-for-profit sectors.

\section*{Data Availability Statement}
All the datasets analyzed during the current study are available in the  UCI Machine Learning Repository \cite{Dua2019}, \url{https://archive.ics.uci.edu/ml/datasets.php}.

\bibliographystyle{abbrv}

\end{document}